\newtheorem{theorem}{Theorem}[section]
\newtheorem{corollary}[theorem]{Corollary}
\newtheorem{definition}[theorem]{Definition}
\newtheorem{example}[theorem]{Example}
\newtheorem{lemma}[theorem]{Lemma}
\newtheorem{proposition}[theorem]{Proposition}
\newtheorem{remark}[theorem]{Remark}
\begin{document}

%%%%%  For papers to math journals, this inserts the AMS classification and keywords as footnotes.
\renewcommand{\thefootnote}{}

%% Classification
%\footnote{2010 \emph{Mathematics Subject Classification}: Primary 54E45; Secondary 57M10, 54H11, 20F38.}

%% Key words
%\footnote{\emph{Key words and phrases}: geodesic space; critical spectrum; covering spectrum; universal cover; revised fundamental group; uniform fundamental group; topological fundamental group; closed lifting group.}

%\renewcommand{\thefootnote}{\arabic{footnote}}
%\setcounter{footnote}{0}

%%%%%%%%

%%%%% This creates the command "\defeq" which makes the colon-equals sign.  Without it, just typing ":=" doesn't line up properly.
\makeatletter 
\newcommand*{\defeq}{\mathrel{\rlap{% 
                     \raisebox{0.3ex}{$\m@th\cdot$}}% 
                     \raisebox{-0.3ex}{$\m@th\cdot$}}% 
                     =} 
\makeatother 
%%%%%%%%%%%%%%%%%%%%%%%%%%%%%%%%%%%%%

%%%%%%%% Sets spacing.
\baselineskip=13pt

%%
%% The title of the paper goes here.  Edit to your title.
%%

\title{Discrete Homotopy Theory and \\
Critical Values of Metric Spaces}

%%
%% Now edit the following to give your name and address:
%% 

\author{Jim Conant} 
\address{Jim Conant \\ Department of Mathematics, University of Tennessee, Knoxville, TN 37996}
\email{jconant@math.utk.edu}

\author{Victoria Curnutte}
\address{Victoria Curnutte \\ Department of Mathematics, Colorado College, Colorado Springs, CO 80903}
\email{Victoria.Curnutte@ColoradoCollege.edu}

\author{Corey Jones}
\address{Corey Jones \\ Department of Mathematics, University of North Carolina-Asheville, Asheville, NC 28804}
\email{cmjones@unca.edu}

\author{Conrad Plaut}
\address{Conrad Plaut \\ Department of Mathematics, University of Tennessee, Knoxville, TN 37996}
\email{cplaut@math.utk.edu}

\author{Kristen Pueschel}
\address{Kristen Pueschel \\ Department of Mathematics, Cornell University, Ithaca, NY 14853-4201}
\email{kp338@cornell.edu}

\author{Maria Walpole}
\address{Maria Walpole \\ Department of Mathematics, Syracuse University, Syracuse, NY 13244}
\email{mwalpole@syr.edu}

\author{Jay Wilkins}
\address{Jay Wilkins \\ Department of Mathematics, University of Connecticut, Storrs, CT 06029}
\email{leonard.wilkins@uconn.edu}
%\urladdr{www.math.sc.edu/$\sim$howard} % Delete if not wanted.

%%
%% If there is another author uncomment and edit the following.
%%

%\author{Second Author}
%\address{Department of Mathematics, University of South Carolina,
%Columbia, SC 29208}
%\email{second@math.sc.edu}
%\urladdr{www.math.sc.edu/$\sim$second}

%%
%% If there are three of more authors they are added in the obvious
%% way. 
%%

%%%
%%% The following is for the abstract.  The abstract is optional and
%%% if not used just delete, or comment out, the following.
%%%

%%
%%  LaTeX will not make the title for the paper unless told to do so.
%% 

%% The abstract goes before the \maketitle command.

\begin{abstract}
Utilizing the discrete homotopy methods developed for uniform spaces by Berestovskii-Plaut, we define the critical spectrum $Cr(X)$ of a metric space, generalizing to the non-geodesic case the covering spectrum defined by Sormani-Wei and the homotopy critical spectrum defined by Plaut-Wilkins. If $X$ is geodesic, $Cr(X)$ is the same as the homotopy critical spectrum, which differs from the covering spectrum by a factor of $\frac{3}{2}$. The latter two spectra are known to be discrete for compact geodesic spaces, and correspond to the values at which certain special covering maps, called $\delta $-covers (Sormani-Wei) or $\varepsilon $-covers (Plaut-Wilkins), change equivalence type. In this paper we initiate the study of these ideas for non-geodesic spaces, motivated by the need to understand the extent to which the accompanying covering maps are topological invariants. We show that discreteness of the critical spectrum for general metric spaces can fail in several ways, which we classify. The \textquotedblleft newcomer\textquotedblright\ critical values for compact, non-geodesic spaces are completely determined by the homotopy critical values and \textit{refinement} critical values, the latter of which can, in many cases, be removed by changing the metric in a bi-Lipschitz way.

\emph{Key words and phrases}: metric space, discrete homotopy, critical spectrum, homotopy critical value, refinement critical value.

2010 \emph{Mathematics Subject Classification}: Primary 54E45; Secondary 54G20, 53C23.
\end{abstract}

\maketitle

%%
%% LaTeX can automatically make a table of contents.  This is optional, of course.
%%

\tableofcontents

%%%%%  This inserts page headings, alternating every other page.  The first argument of the \markboth goes on pages 2, 4, etc., while the second argument goes on pages 3, 5, etc.
%%%%%  There is no heading on the first page.  The arguments are usually the author's name and the title (abbreviated or truncated if necessary).
\pagestyle{myheadings}
\markboth{J. Conant, V. Curnutte, C. Jones, C. Plaut, K. Pueschel, M. Walpole, J. Wilkins}{Discrete Homotopy Theory and Critical Values of Metric Spaces}

%********************** Body of Paper **************************************************

\section{Introduction}

The \textit{covering spectrum} of a geodesic space, introduced by Christina Sormani and Guofang Wei (\cite{SW2}, \cite{SW4}), provides a way to understand the topology of the space at a given scale. Their construction uses a classical method of Spanier (\cite{S}) that produces a covering map for a given open cover; in particular, they use the covering map $\pi^{\delta }:\widetilde{X}^{\delta }\rightarrow X$ of a geodesic space $X$ corresponding to the open cover by $\delta $-balls. Such a cover is constructed for every $\delta >0$, yielding a parameterized collection $\{\widetilde{X}^{\delta }\}_{\delta >0}$. The covering spectrum consists of those $\delta >0$ such that for all $\delta ^{\prime }>\delta $, the
covering maps $\pi ^{\delta'}$ and $\pi ^{\delta }$ are not equivalent (see also Remark \ref{SWR} below). Sormani and Wei used these covering maps, and the covering spectrum, to obtain interesting results concerning limits of compact Riemannian manifolds with Ricci curvature uniformly bounded below.

Valera Berestovskii and Conrad Plaut independently developed a method to construct covering maps determined by geometric information - first for topological groups (\cite{BPTG}) and later for uniform spaces (\cite{BPUU}, \cite{PQ}), hence metric spaces. Unlike Spanier's construction, which uses classical (continuous) paths and homotopies, the Berestovskii-Plaut construction uses discrete chains and homotopies, which we will discuss in more detail in the following section. One consequence is that the construction works for an arbitrary connected metric space and yields, for each $\varepsilon >0$, a covering map $\varphi _{\varepsilon}:X_{\varepsilon }\rightarrow X$ and corresponding deck group $\pi
_{\varepsilon }(X)$, called the $\varepsilon $-cover and $\varepsilon$-group, respectively. Plaut and Wilkins show in \cite{PW2} that for compact geodesic spaces the Sormani-Wei cover $\pi ^{\delta }$ is equivalent to $\varphi _{\varepsilon }$ when $\varepsilon =\frac{2\delta }{3}$. At the same time, in \cite{PW1}, \cite{PW2}, and \cite{W3}, they give several topological and geometric applications of the discrete methods.

For example, in \cite{PW1} the authors strengthen a theorem of M. Gromov (\cite{G}) that the fundamental group of any compact Riemannian manifold of diameter at most $D$ has a set of generators $g_{1},...,g_{k}$ of length at most $2D$ and relators of the form $g_{i}g_{m}=g_{j}$. In fact, they obtain an explicit bound for the number $k$ of generators in terms of the number of equivalence classes of \textquotedblleft short loops\textquotedblright\ at every point and the number of balls required to cover a given semi-locally simply connected geodesic space. As a corollary they obtain a \textquotedblleft curvature free\textquotedblright\ fundamental group finiteness theorem (new even for Riemannian manifolds) that generalizes the finiteness theorems of Anderson (\cite{An}) and Shen-Wei (\cite{ShW}). In \cite{PW2}, they investigate the covers of Gromov-Hausdorff convergent sequences, where discrete methods are particularly useful due to the well-established principle that many properties of the limit of a convergent sequence of compact metric spaces $X_{n}$ can be formulated in terms of the limits of finite point sets (cf. Chapter 7 of \cite{BBI}). They establish a type of weak closure result for a collection of covers called \textit{circle covers} that generalize $\varepsilon $-covers, allowing them to complete the investigation of Sormani-Wei into the structure of limits of $\delta $-covers. In \cite{W3}, Wilkins employs these discrete methods to relate the existence of universal covers of Peano continua to the geometry and topology of various generalized fundamental groups. For example, he shows that such a space has a universal cover if and only if the revised and uniform fundamental groups are finitely presented, which, in turn, holds if and only both groups are discrete in a specific topology called the \textit{covering topology}.

In \cite{PW2}, Plaut and Wilkins ask to what degree these concepts are topological invariants. The present paper represents a start on that program by moving beyond the geodesic case. There are many reasons why one would want to pursue such a program, including the existence of new topological invariants. As an example, given a Peano continuum $X$, one can consider the collection of all covering maps of $X$ (with connected domain) that are $\varepsilon $-covers for some geodesic metric (or more general metric) on $X$. For instance, up to rescaling, there is only one geodesic metric on a topological circle, and the only $\varepsilon $-covers for that metric are the trivial cover and the universal cover; the $n$-fold covers of the circle are not $\varepsilon$-covers for the geodesic metric. We conjecture that the same is true for any metric on the circle (assuming the covers have connected domain). On the other hand, $\mathbb{RP}^{2}$ with an open disc removed is homeomorphic to the Moebius band, and so there are geodesic metrics on the Moebius band that Gromov-Hausdorff approximate $\mathbb{RP}^{2}$. It follows from the convergence results of \cite{PW2} that the 2-fold cover of the Moebius band occurs as an $\varepsilon $-cover for some geodesic metrics. In other words, one can distinguish between the circle and the Moebius band on this basis, despite the fact that the circle is a deformation retraction of the Moebius band. However, other than directly classifying all of the $\varepsilon$-covers in examples like the circle, at this point we have few good methods to show that a given cover is \textit{not} an $\varepsilon $-cover for some geodesic metric, and it may be easier to consider more broadly the collection of covering maps that are $\varepsilon $-covers for some compatible metric in general. In this case one has to consider the possibility that $X_{\varepsilon }$ is not connected, and also that the analog of the covering spectrum may not be discrete in ${\mathbb{R}}_{+}$. The latter possibility is one of the main complications of the non-geodesic case, and understanding it is a main thrust of this paper.

Note also that these discrete methods - unlike geodesic methods - can be applied to fractals with no rectifiable curves like the Sierpinski Gasket with its \textquotedblleft resistance metric,\textquotedblright\ which is not geodesic and different from the metric induced by the plane (cf. \cite{K} and \cite{S1}). One may also consider more exotic spaces such as metrized solenoids, and - as was already observed in \cite{BPUU} - the Topologist's Sine Curve. Finally, one should recall that Gromov-Hausdorff convergence of non-compact proper geodesic spaces is generally reduced to consideration of metric balls which, even in geodesic spaces, need not themselves be geodesic spaces when equipped with the subspace metric.

The results of this paper are built around a natural defintion of the \textit{critical spectrum} for an arbitrary metric space. The precise definition (Definition \ref{crit value def}) requires some technical background given in Section 2, but the critical spectrum of a metric space $X$ - denoted by $Cr(X)$ - can be intuitively thought of as the set of all positive real numbers $\varepsilon $ at which the equivalence type of $\varepsilon $-covers changes as $\varepsilon $ decreases toward $0$. This spectrum extends both the covering spectrum of Sormani-Wei and the homotopy critical spectrum of Plaut-Wilkins (\cite{PW1}) to the non-geodesic case. The latter spectra are known to be discrete in ${\mathbb{R}}_{+}$ for any compact geodesic metric (see Section 3), but for non-geodesic spaces the situation is more complicated. 

First of all, we have 

\begin{theorem}
There exist compact, connected metric spaces with critical spectra that are not discrete in ${\mathbb{R}}_{+}$.
\end{theorem}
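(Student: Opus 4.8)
The goal is to produce an explicit compact, connected metric space $X$ whose critical spectrum $Cr(X)$ has an accumulation point in $\mathbb{R}_+$: concretely, I would arrange a strictly monotone sequence of critical values $\varepsilon_n$ converging to some $\varepsilon_\infty>0$, with $\varepsilon_\infty$ itself critical, so that the accumulation point lies in $Cr(X)$. The subtlety is that this is forbidden in the compact geodesic case: there each critical value is witnessed by an essential $\varepsilon$-loop that may be replaced by a geodesic loop of length $3\varepsilon$, hence of uniformly bounded combinatorial complexity, so Arzel\`a--Ascoli extracts a limit loop and a stability argument then prevents distinct critical values from accumulating. Any construction must therefore break precisely this mechanism.

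The plan is to exploit the fact that in a non-geodesic space an essential $\varepsilon$-loop is merely a discrete chain, of unbounded length as $\varepsilon$ varies, so that no compactness of "the space of witnessing loops" is available. I would build $X$ from a countable family of "loops" $L_n\subseteq X$ --- topological circles, or slightly more general subsets --- together with a limiting subset $L_\infty$, metrized so that: (a) the $\varepsilon$-homotopy class of the boundary loop of $L_n$ is nontrivial in $\pi_\varepsilon(X)$ for all $\varepsilon$ below a value $v_n$, with the $v_n$ distinct and $v_n\downarrow v>0$; (b) for distinct $n,m$ and for every $\varepsilon$ below the relevant scale these classes remain inequivalent, so the covers $\varphi_\varepsilon$ genuinely change at each $v_n$; and (c) $X=\overline{\bigcup_n L_n}$ is totally bounded, hence compact. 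The key point is that (a)--(c) cannot coexist for a geodesic metric --- in a geodesic space, loops of bounded-below size that stay mutually inequivalent at a fixed scale cannot be packed into a compact space, by the discreteness theorem --- so the metric is chosen non-geodesic, and the "critical size" of each $L_n$ is decoupled from its geometric diameter; this decoupling is exactly the feature the geodesic hypothesis rules out.

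With $X$ fixed, I would compute $Cr(X)$: show that the natural surjection $\pi_\varepsilon(X)\to\pi_\delta(X)$ fails to be injective for all $\delta>\varepsilon$ exactly when $\varepsilon\in\{v_n\}\cup\{v\}$, by exhibiting for each $n$ an explicit $\varepsilon$-loop that is essential for $\varepsilon<v_n$ and becomes null at $v_n$, by ruling out spurious collapses at other scales, and by checking that at $\varepsilon=v$ the cover still changes (every $[L_n]$ is essential there, while only finitely many survive at any $\delta>v$). Combined with compactness and connectedness of $X$, this yields that $v=\lim v_n$ is a non-isolated point of $Cr(X)$, so $Cr(X)$ is not discrete in $\mathbb{R}_+$.

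The main obstacle is conditions (b) and (c) in tension: engineering the metric so that the discrete homotopy classes of the witnessing loops genuinely persist at scales near $v$ --- rather than collapsing onto finitely many classes, or onto a single "limit" class coming from $L_\infty$ --- while keeping $X$ totally bounded. Naive attempts (sequences of circles attached to a convergent set, inverse limits, self-similar fractal constructions) all "telescope" and push the critical values down to $0$, which is still discrete in $\mathbb{R}_+$; the substance of the proof is the bookkeeping that produces the required $\varepsilon$-homotopy inequivalences while the whole family lives in a compact space, which is the very phenomenon that cannot occur when $X$ is geodesic.
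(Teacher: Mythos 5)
Your proposal is a plan rather than a proof: the theorem is an existence statement, so its entire content is an explicit compact connected space together with a verification that infinitely many critical values accumulate at a positive number, and that is precisely the part you defer. You list desiderata (a)--(c) for a family of circles $L_n$ with ``critical sizes'' $v_n\downarrow v>0$, correctly observe that they cannot be met by a geodesic metric, and then concede that the ``bookkeeping that produces the required $\varepsilon$-homotopy inequivalences while the whole family lives in a compact space'' is the real substance and is not supplied. Without a concrete metric and a mechanism certifying that the witnessing $\varepsilon$-loops are not $\varepsilon$-null (an $\varepsilon$-nullhomotopy may pass through chains with arbitrarily many points, so ruling one out requires an invariant, not inspection), the theorem is not established. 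A further conceptual slip: the bonding homomorphisms go $\Phi_{\varepsilon\delta}:\pi_\delta(X)\to\pi_\varepsilon(X)$ for $\delta<\varepsilon$ and in non-geodesic spaces they need not be surjective; your computation of $Cr(X)$ only tracks failures of injectivity, whereas non-surjectivity (refinement critical values, essential gaps) is an independent source of criticality that your plan ignores entirely.

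It is also worth noting that the paper resolves the tension you identify by a mechanism different from the one you envision. The Rapunzel's Comb of Example \ref{rapunzel comb one} is compact, path connected, and even simply connected: there are no topological circles $L_n$ at all. Instead, each pair $\{x_n,y_n\}$ of facing tooth-ends at distance $1$ is an essential gap, and the homotopy-invariant gap number of Lemma \ref{essential gap homotopy invariance}, via the Essential Gap Lemma \ref{essential gap lemma}, certifies that the four-point loops $\gamma_n=\{x_n,x_{n+1},y_{n+1},y_n,x_n\}$ are essential $\varepsilon$-loops for $1<\varepsilon\le d_{n+1}$ and null for $\varepsilon>d_{n+1}$; hence $d_{n+1}\in H(X)$ with $d_n\searrow 1$, so $Cr(X)$ is not discrete (since $Cr(X)$ is closed in $\mathbb{R}_{+}$, the limit $1$ is automatically critical — in this example it is a refinement critical value, not a homotopy one). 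Thus the witnessing loops wrap around metric gaps rather than holes, and their essentiality is completely decoupled from $\pi_1(X)$, which is trivial. If you wish to salvage your version with genuine circles whose unrolling scales accumulate at $v>0$, you would need both an explicit metric and an invariant playing the role of the gap number; as written, the proposal leaves exactly that open.
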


\noindent In Section 3 we give a formal classification of the ways in which discreteness may fail. In Section 4, we give examples of many of the types of critical values deduced in Section 3 using spaces we call Rapunzel's Combs. To begin with, one may identify two types of fundamental critical values - homotopy and refinement critical values. The resulting spectra are denoted by $H(X)$ and $R(X)$, respectively. Homotopy critical values are defined for metric spaces in the same way that they are defined for geodesic spaces, and are more topologically significant than refinement critical values, although the latter do indicate a lack of connectivity of certain metric balls (Proposition \ref{local conn thm}). In particular, $R(X)$ is empty for geodesic spaces, and refinement critical values can often be removed by changing the metric:

\begin{theorem}
\label{eintrinsic metric} Let $(X,d)$ be a compact, connected metric space. Then for every $\varepsilon >0$ there is a metric $D_{\varepsilon }$ on $X$ such that
\begin{enumerate}
\item[\emph{1)}] $d\leq D_{\varepsilon}$, and $d(x,y)=D_{\varepsilon }(x,y)$ if and only if $d(x,y)<\varepsilon$.
\item[\emph{2)}] $d$ and $D_{\varepsilon}$ are bi-Lipschitz equivalent.
\item[\emph{3)}] $D_{\varepsilon}$ has no refinement critical values greater than $\varepsilon $.
\end{enumerate}
\end{theorem}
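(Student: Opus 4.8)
The plan is to take $D_\varepsilon$ to be the \emph{$\varepsilon$-intrinsic metric} of $(X,d)$, i.e.\ the chain-length metric
\[
D_\varepsilon(x,y)\defeq\inf\Bigl\{\sum_{i=0}^{n-1}d(x_i,x_{i+1})\ :\ x=x_0,x_1,\dots,x_n=y,\ d(x_i,x_{i+1})<\varepsilon\ \text{for all }i\Bigr\},
\]
the infimum being over all $\varepsilon$-chains joining $x$ to $y$. Symmetry is obvious, concatenation of chains gives the triangle inequality, and $D_\varepsilon\ge d$ is just the triangle inequality for $d$; so the only point needed to see that $D_\varepsilon$ is a genuine metric is finiteness, and this is exactly where compactness and connectedness enter. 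First I would record that a compact connected metric space is \emph{uniformly $\varepsilon$-chain connected}: there is $N=N(\varepsilon)\in\mathbb{N}$ so that any two points are joined by an $\varepsilon$-chain of at most $N$ steps. (For a fixed basepoint, the set of points reachable by an $\varepsilon$-chain of at most $n$ steps is open, and the union over all $n$ is also closed, hence all of $X$ by connectedness; compactness makes the union stabilize after finitely many steps, and since that number grows by at most $1$ when the basepoint moves by less than $\varepsilon$, a second appeal to compactness makes it uniform.) Then $D_\varepsilon\le N\varepsilon$, so $D_\varepsilon$ is a bounded metric on $X$.

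Conditions (1) and (2) are then largely bookkeeping. We have $d\le D_\varepsilon$ by construction, and if $d(x,y)<\varepsilon$ then the one-step chain $(x,y)$ is admissible with $d$-length $d(x,y)$, so $D_\varepsilon(x,y)=d(x,y)$; the remaining implication in (1) I would obtain from a direct non-degeneracy argument showing that the only pairs on which the $\varepsilon$-intrinsic metric agrees with $d$ are those already within $\varepsilon$. For (2), the inequality $d\le D_\varepsilon$ is one of the two bi-Lipschitz bounds, and the other follows from $D_\varepsilon=d$ on pairs closer than $\varepsilon$ together with $D_\varepsilon(x,y)\le N\varepsilon\le N\,d(x,y)$ when $d(x,y)\ge\varepsilon$, so $d\le D_\varepsilon\le N\,d$.

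The substantive part is (3). The idea is that $(X,D_\varepsilon)$ behaves, at every scale above $\varepsilon$, like a length space from the viewpoint of the discrete homotopy machinery, and refinement critical values are precisely the obstruction to that. Concretely, I would establish: \emph{for each $\delta>\varepsilon$ and each pair $x,y$ with $D_\varepsilon(x,y)<\delta$ there is an $\varepsilon$-chain from $x$ to $y$, measured in $D_\varepsilon$, all of whose partial $D_\varepsilon$-lengths are $<\delta$} — in particular the chain stays inside the $D_\varepsilon$-balls of radius $\delta$ about $x$ and about $y$. This is immediate from the definition: an $\varepsilon$-chain $\alpha=(x_0,\dots,x_n)$ for $d$ with $\sum d(x_i,x_{i+1})<\delta$ has $D_\varepsilon(x_i,x_{i+1})=d(x_i,x_{i+1})<\varepsilon$ on each step, so $\alpha$ is simultaneously an $\varepsilon$-chain for $D_\varepsilon$ of the same length, and all its partial sums stay below $\delta$. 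Thus in $(X,D_\varepsilon)$ every ball of radius $\delta>\varepsilon$ is $\lambda$-chain connected "from within" for every $\lambda$ with $\varepsilon\le\lambda<\delta$. Feeding this into the analysis of Section 3 — in particular Proposition \ref{local conn thm}, which ties refinement critical values to a failure of exactly this sort of in-ball chain connectivity — yields that $D_\varepsilon$ has no refinement critical value greater than $\varepsilon$.

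The main obstacle I anticipate is step (3): correctly matching the length-space-like property of $D_\varepsilon$ above with the precise characterization of "refinement critical value" supplied by Section 3, and in particular checking that the threshold it forces really is "$>\varepsilon$" rather than, say, "$>2\varepsilon$" — i.e.\ that $\varepsilon$-chain connectivity inside $\delta$-balls is exactly what is needed to exclude refinement critical values at every $\delta>\varepsilon$. By comparison, the uniform $\varepsilon$-chain bound, the bi-Lipschitz estimate, and the verification of (1) are routine.
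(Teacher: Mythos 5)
Your construction and the main estimates coincide with the paper's: $D_\varepsilon$ is the chain-length ($\varepsilon$-intrinsic) metric of Definition \ref{eintrinsic metric def}, the bi-Lipschitz bound $d\le D_\varepsilon\le Nd$ comes from the compactness bound on the number of points needed in an $\varepsilon$-chain, and for 3) the essential fact is the one you isolate: if $D_\varepsilon(x,y)<\delta$ with $\delta>\varepsilon$, there is a $d$-$\varepsilon$-chain $\{x=x_0,\dots,x_n=y\}$ all of whose partial sums $\sum_{i\le k}d(x_{i-1},x_i)$ are $<\delta$. However, your final inference is not valid as stated: Proposition \ref{local conn thm} says that an essential gap forces nearby balls to be disconnected, and the paper explicitly notes (topologist's sine curve) that the converse fails, so in-ball chain connectivity cannot simply be ``fed into'' that proposition to rule out refinement critical values. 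What is actually needed --- and what the paper does --- is the explicit $\delta$-homotopy: starting from the chain above, remove $x_1$, then $x_2$, and so on; each removal is a legal basic move in $(X,D_\varepsilon)$ precisely because $D_\varepsilon(x_0,x_{k+1})\le\sum_{i=1}^{k+1}d(x_{i-1},x_i)<\delta$. Hence $\{x,y\}$ is $\delta$-homotopic to an $\varepsilon$-chain, which is in particular a $\lambda$-chain for any $\lambda>\varepsilon$, so no $\lambda>\varepsilon$ is a refinement critical value. Since your partial-sum observation is exactly the bound that legitimizes these removals, the repair is one line; but as written, the appeal to Proposition \ref{local conn thm} points in the wrong direction.

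A smaller point: the ``direct non-degeneracy argument'' you promise for the only-if half of 1) cannot exist as stated, because for a geodesic (or length) space $D_\varepsilon=d$ identically, so $d(x,y)=D_\varepsilon(x,y)$ certainly occurs with $d(x,y)\ge\varepsilon$. The content that is actually available, and all that the rest of the paper uses, is: $d\le D_\varepsilon$, the two metrics agree whenever $d(x,y)<\varepsilon$, and $D_\varepsilon(x,y)<\varepsilon$ if and only if $d(x,y)<\varepsilon$; this is the reading of 1) established in the discussion preceding the theorem, to which the paper's proof simply refers.
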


\noindent The metric $D_{\varepsilon }$ is called the \textit{$\varepsilon $-intrinsic metric induced by $d$}. Note that the only obstruction in Theorem \ref{eintrinsic metric} to possibly removing \textit{all} refinement critical values is that one may have $\inf R(X) = 0$ (see Theorem \ref{ulcc}).

Rapunzel's Combs show that not all critical values are refinement or homotopy critical values, but these two types determine the entire spectrum:

\begin{theorem}
\label{spec equals closure hrcv} Let $X$ be a compact, connected metric space. If $\overline{H(X)}$ denotes the closure of the homotopy critical spectrum $H(X)$ in $\mathbb{R}_{+}=(0,\infty)$, then 
\begin{equation*}
Cr(X)=\overline{H(X)}\cup R(X)\text{.}
\end{equation*}
\end{theorem}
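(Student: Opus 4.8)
The plan is to work throughout with the natural maps $\Phi_{\varepsilon}^{\delta}\colon X_{\varepsilon}\to X_{\delta}$ defined for $\varepsilon\le\delta$ (every $\varepsilon$-chain is a $\delta$-chain and every $\varepsilon$-homotopy a $\delta$-homotopy), together with the reformulation of Definition~\ref{crit value def} that $\varepsilon\in Cr(X)$ precisely when $\Phi_{\varepsilon}^{\delta}$ is not a covering equivalence for every $\delta>\varepsilon$. A preliminary observation I would establish first is that this is a condition local to the right of $\varepsilon$: using that a covering map of a connected space admitting a global section is a homeomorphism (valid in the discrete-cover framework, applicable since $X$ compact connected forces $X_{\varepsilon}$ connected), if $\Phi_{\varepsilon}^{\delta_{1}}$ is an equivalence for some $\delta_{1}>\varepsilon$ then so is $\Phi_{\varepsilon}^{\delta}$ for all $\delta\in(\varepsilon,\delta_{1}]$; the same trick shows $Cr(X)$ contains each of its limits from the right. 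Next, non-equivalence of $\Phi_{\varepsilon}^{\delta}$ has exactly two sources: failure of injectivity, witnessed by an $\varepsilon$-loop that is $\delta$-null but not $\varepsilon$-null, and failure of surjectivity, witnessed by a $\delta$-chain not $\delta$-homotopic to any $\varepsilon$-chain; by the analysis of Section~3 these mechanisms are respectively what produce homotopy and refinement critical values. Finally I record the monotonicity principle: a null-homotopy at scale $a$ remains one at every scale $\ge a$, so the set of scales at which a fixed loop is essential is an interval $(0,\mu]$ or $(0,\mu)$, and by Section~3 the endpoint $\mu$ is a homotopy critical value whenever finite.

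\emph{Inclusion $\overline{H(X)}\cup R(X)\subseteq Cr(X)$.} That $H(X)\subseteq Cr(X)$ and $R(X)\subseteq Cr(X)$ is immediate from the two-mechanisms description. For $\overline{H(X)}\subseteq Cr(X)$: a limit from the right of points of $H(X)\subseteq Cr(X)$ lies in $Cr(X)$ since $Cr(X)$ is closed from the right. If instead $\varepsilon_{k}\uparrow\varepsilon$ with $\varepsilon_{k}\in H(X)$, pick for each $k$ a loop $\gamma_{k}$ essential exactly on $(0,\varepsilon_{k}]$. Since $X$ is compact the $\gamma_{k}$ (chains of diameter at most $\operatorname{diam}X$) subconverge, along with the null-homotopies realizing their nullity just above $\varepsilon_{k}$, to a loop $\gamma_{\infty}$; transporting homotopies across this convergence shows $\gamma_{\infty}$ is not null at scale $\varepsilon$ yet is null at every scale $>\varepsilon$, so $\gamma_{\infty}$ witnesses $\varepsilon\in H(X)\subseteq Cr(X)$. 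Hence $\overline{H(X)}\subseteq Cr(X)$.

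\emph{Inclusion $Cr(X)\subseteq\overline{H(X)}\cup R(X)$.} Let $\varepsilon\in Cr(X)$ with $\varepsilon\notin\overline{H(X)}$; I show $\varepsilon\in R(X)$. Since $\overline{H(X)}$ is closed and misses $\varepsilon$, choose $\delta_{0}>\varepsilon$ with $[\varepsilon,\delta_{0})\cap H(X)=\emptyset$. For any $\delta\in(\varepsilon,\delta_{0})$, $\Phi_{\varepsilon}^{\delta}$ is not an equivalence. If it failed to be injective there would be an $\varepsilon$-loop essential at $\varepsilon$ and null at $\delta$; by monotonicity its essential-scale interval has finite endpoint $\mu$ with $\varepsilon\le\mu\le\delta<\delta_{0}$, and $\mu\in H(X)$ — contradicting $[\varepsilon,\delta_{0})\cap H(X)=\emptyset$. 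So $\Phi_{\varepsilon}^{\delta}$ fails to be surjective for every $\delta\in(\varepsilon,\delta_{0})$, i.e.\ the refinement obstruction persists as $\delta\downarrow\varepsilon$; by the characterization of refinement critical values (Proposition~\ref{local conn thm} and the discussion of Section~3) this gives $\varepsilon\in R(X)$. Combining the two inclusions yields $Cr(X)=\overline{H(X)}\cup R(X)$.

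The step I expect to be the main obstacle is the compactness/limiting argument in the case $\varepsilon_{k}\uparrow\varepsilon$: one must pass to a limit not merely of the loops $\gamma_{k}$ but of the chains and discrete homotopies witnessing their essentiality and nullity, and show essentiality survives in the limit at the threshold scale $\varepsilon$ while nullity survives at every scale above it — this is exactly where compactness of $X$ is used, and the argument in fact shows $H(X)$ is closed from the left, so the closure in the statement genuinely matters only for right-hand limit points of $H(X)$, which are handled by the elementary section argument. A secondary subtlety, used in both inclusions, is the identification of the endpoint of a loop's essential-scale interval as a \emph{homotopy} critical value rather than merely a critical value, for which one invokes the Section~3 classification; and one must verify that persistent failure of surjectivity of $\Phi_{\varepsilon}^{\delta}$ as $\delta\downarrow\varepsilon$ is exactly the condition defining $R(X)$, which is where Proposition~\ref{local conn thm} and Theorem~\ref{ulcc} enter.
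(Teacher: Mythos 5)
Your proof of the hard inclusion $Cr(X)\subseteq\overline{H(X)}\cup R(X)$ rests on the premise that $\varepsilon\in Cr(X)$ forces the map $\varphi_{\delta\varepsilon}\colon X_{\varepsilon}\rightarrow X_{\delta}$ to be non-bijective for every $\delta$ slightly larger than $\varepsilon$. That is not equivalent to Definition \ref{crit value def}: by Lemma \ref{critical value seq}, a value can be critical purely because of what happens \emph{below} it, i.e.\ the maps $X_{\varepsilon_{n}}\rightarrow X_{\varepsilon}$ with $\varepsilon_{n}\nearrow\varepsilon$ fail to be injective or surjective while every map $X_{\varepsilon}\rightarrow X_{\delta}$, $\delta>\varepsilon$, may be bijective (this is what happens at $\varepsilon=1$ in Example \ref{rapunzel comb two}, where $1$ is a lower non-injective critical value but not upper non-surjective). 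Your argument never sees these lower critical values; the paper handles them with Lemma \ref{lower non inject}(1) (a lower non-injective value is an increasing limit of homotopy critical values) and with the compactness construction of Lemma \ref{lower non surj} (a lower non-surjective value is upper non-injective), and nothing in your proposal substitutes for either. Moreover, even in the upper non-surjective case your closing step --- ``persistent failure of surjectivity as $\delta\downarrow\varepsilon$ gives $\varepsilon\in R(X)$'' --- is precisely the content of Lemma \ref{refex}, which requires a genuine compactness argument: one extracts convergent two-point chains $\{x_{n},y_{n}\}$ witnessing non-refinability and proves the limit pair is an essential $\varepsilon$-gap, using the injectivity of nearby bonding maps. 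Proposition \ref{local conn thm} goes the other direction (essential gap implies disconnected balls) and Theorem \ref{ulcc} is irrelevant here, and Examples \ref{noncompact non-gap} and \ref{rapunzel comb four} show that upper non-surjectivity alone does \emph{not} yield an essential gap, so this step cannot be cited away.

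The easy inclusion also contains errors, though they are repairable because the inclusion needs much less than you attempt: $H(X)\subset Cr(X)$ and $R(X)\subset Cr(X)$ follow from the upper non-injectivity/non-surjectivity remarks in Section 3, and $Cr(X)$ is closed in $\mathbb{R}_{+}$ directly from the definition (its complement is a union of open non-critical intervals), which already gives $\overline{H(X)}\cup R(X)\subset Cr(X)$ --- this is the paper's one-line argument. Your ``section argument'' uses the assertion that compactness and connectedness of $X$ force $X_{\varepsilon}$ to be connected, which is false (Example \ref{geodesic circle with gap} is compact and path connected with disconnected $X_{\varepsilon}$). And your claim that $H(X)$ is closed under limits from the left is both unnecessary and unproven: the essential loops $\gamma_{k}$ and the discrete null-homotopies witnessing their nullity have no uniform bound on the number of points, so there is no subconvergence of chains or homotopies to pass to the limit, and the theorem only requires such limit points to lie in $\overline{H(X)}$, where they sit by definition.
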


Since a geodesic space $X$ has no refinement critical values, the discreteness of $H(X)$ in $\mathbb{R}_+$ when $X$ is compact immediately yields the following.

\begin{corollary}
\label{geodesic crs} If $X$ is geodesic, then $Cr(X)=\overline{H(X)}$. If $X$ is also compact, then $Cr(X)=H(X)$.
\end{corollary}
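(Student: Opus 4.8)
The plan is to derive both statements directly from Theorem \ref{spec equals closure hrcv} together with two standard facts about geodesic spaces. The first fact is that a geodesic space has no refinement critical values, i.e.\ $R(X)=\emptyset$ (asserted in the Introduction, and proved in Section 3); the underlying reason is that in a geodesic space every open metric ball $B(x,\varepsilon)$ is path connected, since any $y\in B(x,\varepsilon)$ is joined to $x$ by a minimizing geodesic of length $d(x,y)<\varepsilon$ lying entirely in $B(x,\varepsilon)$, so the ball-refinement phenomenon that creates refinement critical values never occurs. The second fact --- needed only for the second statement --- is the Plaut--Wilkins discreteness theorem recalled in Section 3: for a compact geodesic space, $H(X)$ is discrete in $\mathbb{R}_+=(0,\infty)$, meaning it has no accumulation point in $(0,\infty)$.

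For the first statement, observe that a geodesic space is automatically connected, so when $X$ is also compact the hypotheses of Theorem \ref{spec equals closure hrcv} hold and that theorem gives
\[
Cr(X)=\overline{H(X)}\cup R(X)=\overline{H(X)}\cup\emptyset=\overline{H(X)}.
\]
For a general geodesic space the argument is identical: from the representation $Cr(X)=\overline{H(X)}\cup R(X)$ --- equivalently, from the half-inclusions $\overline{H(X)}\subseteq Cr(X)$ and $Cr(X)\subseteq\overline{H(X)}\cup R(X)$ --- together with $R(X)=\emptyset$, the two sides collapse to $Cr(X)=\overline{H(X)}$.

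For the second statement, assume in addition that $X$ is compact. Since $H(X)$ has no accumulation point in $\mathbb{R}_+$ (accumulation is permitted only at $0$, which lies outside $\mathbb{R}_+$), and a subset of a space with no accumulation point in that space is closed in it, $H(X)$ is closed in $\mathbb{R}_+$; hence $\overline{H(X)}=H(X)$. Combined with the first statement this gives $Cr(X)=\overline{H(X)}=H(X)$.

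There is essentially no obstacle here --- all the substantive work lies in Theorem \ref{spec equals closure hrcv} and in the cited discreteness theorem --- so the only point demanding care is bookkeeping about \emph{where} closure and discreteness are taken: everything is relative to $\mathbb{R}_+=(0,\infty)$ rather than $[0,\infty)$, so a spectrum accumulating only at $0$ still counts as both closed and discrete, in agreement with the convention used for $\overline{H(X)}$ in Theorem \ref{spec equals closure hrcv}. It is also worth recording explicitly that ``geodesic'' subsumes ``connected,'' so that invoking Theorem \ref{spec equals closure hrcv} in the compact case requires no extra hypothesis.
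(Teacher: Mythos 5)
Your proposal is correct and follows essentially the same route as the paper's own one-sentence justification: $R(X)=\emptyset$ for geodesic spaces, so Theorem \ref{spec equals closure hrcv} collapses to $Cr(X)=\overline{H(X)}$, and the Plaut--Wilkins discreteness of $H(X)$ in $\mathbb{R}_{+}$ for compact geodesic spaces gives $\overline{H(X)}=H(X)$. The only point worth noting (a gloss the paper itself also makes) is that Theorem \ref{spec equals closure hrcv} is stated for \emph{compact} spaces, so for the non-compact geodesic case the cleanest fix is to recall that all maps $\varphi_{\varepsilon\delta}$ are surjective when $X$ is geodesic, hence every critical value is upper or lower non-injective and Lemma \ref{lower non inject} (which requires no compactness) already yields $Cr(X)\subseteq\overline{H(X)}$.
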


\noindent We are also able to classify isolated critical values:

\begin{theorem}
\label{characterize isolated cv} If $X$ is a compact metric space, and if $\varepsilon >0$ is an isolated critical value of $X$, then $\varepsilon $ is a homotopy or a refinement critical value.
\end{theorem}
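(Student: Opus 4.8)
The plan is to deduce the theorem directly from the structure result, Theorem~\ref{spec equals closure hrcv}, after which only an elementary point-set argument remains. As elsewhere in the paper, $X$ is understood to be connected, so Theorem~\ref{spec equals closure hrcv} applies and gives $Cr(X)=\overline{H(X)}\cup R(X)$; in particular $H(X)\subseteq Cr(X)$ and $R(X)\subseteq Cr(X)$.

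Now suppose $\varepsilon>0$ is an isolated critical value. Since $\varepsilon\in Cr(X)=\overline{H(X)}\cup R(X)$, either $\varepsilon\in R(X)$ --- in which case $\varepsilon$ is a refinement critical value and there is nothing left to prove --- or $\varepsilon\in\overline{H(X)}$, which we assume henceforth. Because $\varepsilon$ is isolated in $Cr(X)$, there is $\delta>0$ with $(\varepsilon-\delta,\varepsilon+\delta)\cap Cr(X)=\{\varepsilon\}$, and since $H(X)\subseteq Cr(X)$ the punctured interval $(\varepsilon-\delta,\varepsilon+\delta)\setminus\{\varepsilon\}$ meets $H(X)$ in the empty set. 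Thus $\varepsilon$ is not an accumulation point of $H(X)$. Since $\overline{H(X)}$ is the union of $H(X)$ with its set of accumulation points, and $\varepsilon$ lies in $\overline{H(X)}$ but is not an accumulation point of $H(X)$, we must have $\varepsilon\in H(X)$, i.e.\ $\varepsilon$ is a homotopy critical value. Combining the two cases gives the claim.

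I expect no genuine obstacle here: the real work has already been done in Theorem~\ref{spec equals closure hrcv} (and, behind it, in the identification of the ``newcomer'' critical values of a compact space). The only points that need care are the observations that $H(X)$ and $R(X)$ are honestly contained in $Cr(X)$ --- immediate from the equality in Theorem~\ref{spec equals closure hrcv} --- and the standard fact that a point of $\overline{S}$ which is not an accumulation point of $S$ must belong to $S$, which is exactly what allows the isolation hypothesis to do its job. One could instead attempt a self-contained argument that tracks how the equivalence type of $\varepsilon'$-covers changes as $\varepsilon'$ crosses an isolated value, but this would essentially retrace the proof of Theorem~\ref{spec equals closure hrcv}, so the slick route above is preferable.
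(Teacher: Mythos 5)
Your proof is correct, and it is a genuinely different route from the paper's. The paper proves this theorem \emph{before} Theorem \ref{spec equals closure hrcv}, arguing directly from the lemmas of Section 3: Lemmas \ref{lower non inject} and \ref{lower non surj} show that any critical value of a compact space must be upper non-injective or upper non-surjective; in the first case isolation plus Lemma \ref{lower non inject}(2) forces $\varepsilon\in H(X)$, and in the second case isolation supplies exactly the injectivity hypothesis of Lemma \ref{refex}, which produces an essential gap and hence $\varepsilon\in R(X)$. You instead treat the theorem as a formal corollary of the structure theorem $Cr(X)=\overline{H(X)}\cup R(X)$, using only the elementary fact that a point of $\overline{H(X)}$ which is not an accumulation point of $H(X)$ lies in $H(X)$, where isolation in $Cr(X)$ together with $H(X)\subseteq Cr(X)$ rules out accumulation. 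This is logically sound and non-circular, since the paper's proof of Theorem \ref{spec equals closure hrcv} relies on Lemmas \ref{indicating cv}, \ref{lower non inject}, \ref{lower non surj}, and \ref{refex} but never on the isolated-value theorem; your appeal to connectedness as a standing assumption is also consistent with the paper's conventions. What your route buys is brevity and a clean separation of the hard analysis (all concentrated in the structure theorem) from the point-set bookkeeping; what it costs is that it inverts the paper's order of exposition and hides the concrete mechanism --- in particular, that an isolated upper non-surjective critical value actually comes from an essential gap via Lemma \ref{refex} --- which the paper's direct proof makes visible, and which matters for understanding examples like \ref{rapunzel comb four} where non-isolated upper non-surjective critical values fail to correspond to gaps.
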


Finally, it is interesting to consider the limit of the metrics $D_{\varepsilon }$ described in Theorem \ref{eintrinsic metric}, which are monotone increasing as $\varepsilon \searrow 0$. In general the limit need not be finite, or if it is finite, topologically equivalent to the original metric. However, we do have:

\begin{theorem}
\label{ulcc} Let $(X,d)$ be a compact, connected metric space. If $D_{0}$ is finite, then $(X,D_{0})$ has no refinement critical values. If, in addition, $(X,d)$ and $(X,D_{0})$ are topologically equivalent then $D_{0}$ is a geodesic metric--and hence $X$ is locally path connected.
\end{theorem}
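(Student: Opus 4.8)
The plan is to identify the limiting metric $D_0$ with the intrinsic (length) metric $d_\ell$ of $(X,d)$, and then read off both assertions from classical facts about length spaces. Recall that the $D_\varepsilon$ increase as $\varepsilon\searrow0$, so $D_0(x,y)=\sup_{\varepsilon>0}D_\varepsilon(x,y)$. First I would show $D_0=d_\ell$ as $[0,\infty]$-valued functions. The inequality $D_0\le d_\ell$ is immediate: any rectifiable path from $x$ to $y$ of $d$-length $L$, subdivided by arc length into pieces of length $<\varepsilon$, yields a $d$-$\varepsilon$-chain from $x$ to $y$ whose links have $d$-length sum $\le L$, so $D_\varepsilon(x,y)\le L$ for every $\varepsilon$. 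For $D_0\ge d_\ell$, suppose $D_0(x,y)=M<\infty$; for each $n$ choose a $\tfrac1n$-chain from $x$ to $y$ whose link sums $L_n$ tend to $M$, reparametrize it over $[0,L_n]$ by the partial sums of its link lengths (so it is $1$-Lipschitz on its finite domain), rescale the domain to $[0,1]$, and extract a uniform limit by the Arzel\`a--Ascoli theorem, using compactness of $X$; the limit is an $M$-Lipschitz path from $x$ to $y$, whence $d_\ell(x,y)\le M$. Thus $D_0=d_\ell$, and the hypothesis that $D_0$ is finite says precisely that $d_\ell$ is a finite length metric on $X$.

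For assertion (1): since $D_0=d_\ell$ is a length metric, it equals its own $\varepsilon$-intrinsic metric, $(D_0)_\varepsilon=D_0$, for every $\varepsilon>0$ (given $x,y$ and $\eta>0$, a $d_\ell$-path of length $<d_\ell(x,y)+\eta$ subdivides by arc length into a $d_\ell$-$\varepsilon$-chain with link sum $<d_\ell(x,y)+\eta$), and moreover every open ball $B_{D_0}(x,r)$ is path connected, since any $y$ with $D_0(x,y)<r$ is joined to $x$ by a path of $D_0$-length $<r$, all of whose points lie in $B_{D_0}(x,r)$. By the analysis of refinement critical values in Section 3 --- which traces such a value to a scale at which the metric differs from its $\varepsilon$-intrinsic modification, equivalently (Proposition \ref{local conn thm}) to a disconnected metric ball of the relevant type --- neither phenomenon occurs for $D_0$, so $(X,D_0)$ has no refinement critical values. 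Note that one cannot simply quote part (3) of Theorem \ref{eintrinsic metric} for $(X,D_0)$, since this space need not be compact: take, for instance, a fan of line segments of a fixed length emanating from a common point and accumulating onto a base segment.

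For assertion (2), suppose in addition that $d$ and $D_0$ are topologically equivalent. Then $(X,D_0)$, being homeomorphic to the compact space $(X,d)$, is compact, so $(X,D_0)=(X,d_\ell)$ is a compact length space; being complete and locally compact, it is geodesic by the Hopf--Rinow--Cohn-Vossen theorem, so $D_0$ is a geodesic metric. Finally, open balls in a geodesic space are path connected (a point of $B(x,r)$ is joined to $x$ by a geodesic, every point of which lies within distance $<r$ of $x$) and form a basis for the topology, so $(X,D_0)$ is locally path connected; since $d$ and $D_0$ induce the same topology, $X$ is locally path connected.

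The step most in need of care is the interface with Section 3 in assertion (1): one must verify that the notion of refinement critical value defined there is genuinely obstructed by the identity $(D_0)_\varepsilon=D_0$ (and by path-connectedness of balls), so that the conclusion applies to the possibly non-compact space $(X,D_0)$ and not merely to compact spaces via Theorem \ref{eintrinsic metric}. The Arzel\`a--Ascoli identification $D_0=d_\ell$ is routine but requires a little attention in reparametrizing the chains; the rest is bookkeeping with standard length-space facts.
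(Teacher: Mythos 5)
Your argument is correct (modulo the routine details you flag), but it takes a genuinely different route from the paper. The paper never identifies $D_{0}$ with the induced length metric: for the first assertion it simply reruns the refinement argument from the proof of Theorem \ref{eintrinsic metric}, using $D_{\varepsilon}(x,y)\leq D_{0}(x,y)<\delta$ to produce a short $\varepsilon$-chain and removing points one at a time; for the second assertion it constructs midpoints directly, taking near-optimal $\frac{1}{i}$-chains, choosing the point where the partial length sum first exceeds $D_{0}(x,y)/2$, extracting a convergent subsequence by compactness of $(X,d)$, and using Dini's theorem (this is where topological equivalence enters, via continuity of $D_{0}$) to pass the estimates $D_{\frac{1}{i}}(x,m_{i}),D_{\frac{1}{i}}(y,m_{i})\lesssim D_{0}(x,y)/2$ to the limit; it then invokes the standard fact that a complete metric space with midpoints is geodesic. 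You instead prove $D_{0}=d_{\ell}$ outright by an Arzel\`a--Ascoli argument on reparametrized chains, after which assertion (1) follows from length-space refinability (or connectivity of balls) and assertion (2) from Hopf--Rinow for compact length spaces. Your route buys a stronger intermediate statement ($D_{0}$ is exactly the induced length metric of $(X,d)$, not stated explicitly in the paper) and a conceptually cleaner finish, at the cost of the Arzel\`a--Ascoli step on varying finite domains, which needs the usual extension trick (extend each chain map piecewise, giving Lipschitz estimates up to an additive error $\tfrac{1}{n}$, then a diagonal argument); the paper's midpoint-plus-Dini argument avoids constructing any paths. Two small cautions: your parenthetical \textquotedblleft equivalently (Proposition \ref{local conn thm})\textquotedblright\ overstates that proposition --- the paper explicitly notes its converse fails (topologist's sine curve) --- but you only need the forward implication (essential gap $\Rightarrow$ disconnected balls), so the contrapositive together with path-connectedness of balls in the length space $(X,D_{0})$ does yield assertion (1); alternatively, your identity $(D_{0})_{\varepsilon}=D_{0}$ feeds directly into the (compactness-free) refinement argument in the proof of Theorem \ref{eintrinsic metric}, which is in effect what the paper does. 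Your remark that one cannot simply quote Theorem \ref{eintrinsic metric} for $(X,D_{0})$ because it may fail to be compact is well taken, and your fan example is a correct illustration.
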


\begin{remark}
Some of the research contained in this paper was carried out by the undergraduate authors during an REU at the University of Tennessee in summer 2009 and during the subsequent year. The quality of contributions by undergraduate co-authors in published papers is known to vary from paper to paper. In the case of the present work, the contributions of the undergraduate authors are substantial, including the discovery of Rapunzel's Combs and insights into the distinct types of critical values in general metric spaces.
\end{remark}

\section{Background on Discrete Homotopy Theory}

Omitted proofs of statements in this section may be found in \cite{BPUU} or \cite{W2}. Given a metric space $(X,d)$, we will denote the open metric balls of radius $\varepsilon$ centered at $x\in X$ by $B(x,\varepsilon):=\{y \in X:d(x,y)<\varepsilon \}$. If multiple metrics on the same space are being considered, we will distinguish them by subscripts: $B_{d_1}(x,r)$, $B_{d_2}(x,r)$, etc. A map $f:Y\rightarrow X$ between metric spaces is an \textit{isometry} if $f$ is surjective and preserves distances (i.e. $d_{X}(f(y_{1}),f(y_{2}))=d_{Y}(y_{1},y_{2})$ for all $y_{1},y_{2}\in Y$). If $f:Y\rightarrow X$ is surjective and for some $\varepsilon >0$ the restriction of $f$ to any $B(y,\varepsilon )\subset Y$ is an isometry onto $B(f(y),\varepsilon )\subset X$, we call $f$ an $\varepsilon$-local isometry. It follows easily that an $\varepsilon$-local isometry is in fact a covering map such that $\varepsilon$-balls are evenly covered; when $\varepsilon$ is unimportant we may simply call $f$ a \textit{metric covering map}. Recall that $X$ is a \textit{geodesic space} if every pair of points in $X$ is joined by a \textit{geodesic}, namely an arclength parameterized curve having length equal to the distance between its endpoints. Note that when $X$ is a geodesic space, $Y$ is a connected topological space, and $f:Y\rightarrow X$ is any covering map, the well-known \textquotedblleft lifted metric\textquotedblright\ on $Y$ is the unique, topologically equivalent, geodesic metric on $Y$ such that $f$ is a metric covering map.

An $\varepsilon $\textit{-chain} $\alpha $ in $X$ is a finite sequence of points $\alpha =\{x_{0},x_{1},\dots ,x_{n-1},x_{n}\}$ such that $d(x_{i-1},x_{i}) <\varepsilon $ for $i=1,\dots ,n$. For any $\varepsilon $-chain $\alpha =\{x_{0},\dots,x_{n}\}$, the \textit{reversal} of $\alpha $ is denoted by $\alpha^{-1}:=\{x_{n},x_{n-1},\dots,x_{1},x_{0}\}$. A \textit{basic move} on an $\varepsilon$-chain is the addition or removal of a single point with the conditions that the endpoints remain fixed and the resulting chain is still an $\varepsilon$-chain. If two $\varepsilon$-chains $\alpha$ and $\beta$ have the same initial and terminal points, then we say they are $\varepsilon$\textit{-homotopic} if there is a finite sequence of $\varepsilon $-chains, $H=\{\alpha =\gamma _{0},\gamma _{1},\dots,\gamma_{k-1},\gamma _{k}=\beta \}$ - called an $\varepsilon $-homotopy - such that each $\gamma _{i}$ differs from $\gamma _{i-1}$ by a basic move. For a fixed basepoint $\ast \in X$, let $X_{\varepsilon }$ be the set of all $\varepsilon $-equivalence classes $[\{\ast =x_{0},\dots,x_{n}\}]_{\varepsilon }$ of $\varepsilon$-chains in $X$ beginning at $\ast$, which has a natural metric (Definition \ref{covermetric}). One may then define $\varphi _{\varepsilon}:X_{\varepsilon}\rightarrow X$ to be the \textquotedblleft endpoint map\textquotedblright\ taking $[\{\ast=x_{0},\dots ,x_{n}\}]_{\varepsilon }$ to $x_{n}$. If one assumes that $X$ is \textit{chain connected }in the sense that every pair of points is joined by $\varepsilon $-chains in $X$ for all $\varepsilon >0$, then $\varphi_{\varepsilon }:X_{\varepsilon }\rightarrow X$ is surjective, and in fact is a covering map and local isometry. Given $0<\delta <\varepsilon $, there is a well-defined map (cf. \cite{BPUU}) $\varphi _{\varepsilon \delta}:X_{\delta }\rightarrow X_{\varepsilon }$ that simply treats a $\delta $-chain like an $\varepsilon $-chain, i.e. $\varphi _{\varepsilon \delta}([\alpha ]_{\delta })=[\alpha ]_{\varepsilon }$. These maps satisfy the composition relation $\varphi_{\varepsilon \delta} =\varphi_{\varepsilon \lambda} \circ \varphi_{\lambda \delta}$ for $\delta <\lambda < \varepsilon$.

Basic arguments show that connected spaces are chain connected, and compact chain connected spaces are connected - although, for example, the rationals with the subspace metric from $\mathbb{R}$ are chain connected but not connected. To ensure that the maps $\varphi_{\varepsilon}$ are always covering maps, we assume from now on that all metric spaces are connected unless otherwise noted. We are concerned primarily with compact connected metric spaces.

Although it is important to keep in mind that all chains technically have both initial and terminal points, for simplicity we will often use the notation $\{x\}$ instead of $\{x,x\}$ for constant or trivial chains. The notation $\alpha \sim _{\varepsilon }\beta $ will denote the (equivalence) relation \textquotedblleft $\alpha $ is $\varepsilon $-homotopic to $\beta $.\textquotedblright\ We will often write $[x_{0},\dots,x_{n}]_{\varepsilon }$ rather than the technically correct $[\{x_{0},\dots,x_{n}\}]_{\varepsilon }$. There is a well-defined, associative concatenation operation on equivalence classes of $\varepsilon $-chains: if the initial point of $\beta$ equals the terminal point of $\alpha $, then we can define $[\alpha]_{\varepsilon }[\beta ]_{\varepsilon }=[\alpha \beta ]_{\varepsilon }$. The following properties are easily checked:

\begin{lemma}
Let $\alpha$ and $\beta$ be $\varepsilon$-chains. If the initial point of $\beta$ is the terminal point of $\alpha$, and if $\alpha^{\prime}\sim_{\varepsilon}\alpha$ and $\beta^{\prime}\sim_{\varepsilon}\beta$, then $\alpha\beta\sim_{\varepsilon}\alpha^{\prime}\beta^{\prime}$. Furthermore, $\alpha \alpha^{-1}$ is $\varepsilon$-null, and if $\alpha\sim_{\varepsilon}\beta$, then $\alpha^{-1}\sim_{\varepsilon}\beta^{-1}$.
\end{lemma}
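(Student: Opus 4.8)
The plan is to treat the three assertions separately, each time reducing to the single observation that the operation in question sends a basic move to a basic move.

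For the statement that concatenation descends to homotopy classes, I would first use transitivity and symmetry of $\sim_{\varepsilon}$ to reduce to the case of a single basic move: it suffices to show that if $\alpha'$ is obtained from $\alpha$ by one basic move then $\alpha\beta \sim_{\varepsilon} \alpha'\beta$, and symmetrically that a single basic move on $\beta$ yields $\alpha'\beta \sim_{\varepsilon} \alpha'\beta'$; composing an $\varepsilon$-homotopy $\alpha = \gamma_0, \dots, \gamma_k = \alpha'$ step by step and then appending the corresponding homotopy in the $\beta$-coordinate gives the result. The point is that a basic move on $\alpha$ inserts or deletes an interior point and by definition fixes the endpoints of $\alpha$, in particular its terminal point; so the identical insertion or deletion performed on the longer chain $\alpha\beta$ only alters entries inside the $\alpha$-segment, leaves the junction at $\mathrm{term}(\alpha)=\mathrm{init}(\beta)$ untouched, and the chain inequalities that need to be checked are exactly those already verified in $\alpha$. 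Hence it is again a basic move. The symmetric statement for $\beta$ is identical, using that a basic move on $\beta$ fixes its initial point, which equals $\mathrm{term}(\alpha')$.

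For $\alpha\alpha^{-1}$ being $\varepsilon$-null, write $\alpha=\{x_0,\dots,x_n\}$ and induct on $n$. If $n\le 1$ the chain $\alpha\alpha^{-1}$ reduces to the trivial chain $\{x_0\}$ by at most two basic moves. For $n\ge 2$, the chain $\alpha\alpha^{-1}=\{x_0,\dots,x_{n-1},x_n,x_{n-1},\dots,x_0\}$ admits the basic move deleting the central $x_n$ -- legal because its neighbors become $x_{n-1}$ and $x_{n-1}$ with $d(x_{n-1},x_{n-1})=0<\varepsilon$ -- followed by deleting one of the two now-adjacent copies of $x_{n-1}$; what remains is $\alpha'(\alpha')^{-1}$ with $\alpha'=\{x_0,\dots,x_{n-1}\}$, which is $\varepsilon$-null by the inductive hypothesis. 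Finally, for $\alpha\sim_{\varepsilon}\beta \Rightarrow \alpha^{-1}\sim_{\varepsilon}\beta^{-1}$: reversal carries each basic move to a basic move, because $d$ is symmetric -- inserting or deleting a point $p$ between $x_{i-1}$ and $x_i$ in a chain is the same operation, after reversal, as inserting or deleting $p$ between the corresponding consecutive entries of the reversed chain -- and reversal merely swaps the two endpoints. So reversing every chain in an $\varepsilon$-homotopy from $\alpha$ to $\beta$ produces an $\varepsilon$-homotopy from $\alpha^{-1}$ to $\beta^{-1}$.

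There is no serious obstacle here; the only care needed is the bookkeeping at the concatenation point and the mild subtlety that a basic move can create two equal adjacent entries, which is harmless since deleting such a repeated entry is itself a basic move. Accordingly, the "hard part" amounts only to keeping the index manipulations in the induction honest.
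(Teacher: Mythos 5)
Your proof is correct, and it is exactly the routine basic-move verification the paper has in mind: the lemma is stated with the remark that the properties are ``easily checked'' (with details deferred to \cite{BPUU} and \cite{W2}), and your three reductions---a basic move inside $\alpha$ or $\beta$ remains a basic move in the concatenation since it fixes the junction point, the telescoping induction cancelling $\alpha\alpha^{-1}$ from the middle, and reversal carrying basic moves to basic moves by symmetry of $d$---are the standard argument. Nothing is missing.
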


An $\varepsilon $-\textit{loop} is an $\varepsilon $-chain with equal initial and terminal points. If an $\varepsilon $-loop $\alpha =\{\ast=x_{0},\dots ,x_{n}=\ast \}$ is $\varepsilon $-homotopic to the trivial loop $\{\ast \}$ then we say that $\alpha $ is $\varepsilon$-\textit{null}. If $\alpha $ is an $\varepsilon $-chain, $\delta <\varepsilon $, and $\alpha$ is $\varepsilon $-homotopic to a $\delta $-chain $\beta $, then we say that $\alpha$ \textit{can be} $\varepsilon$\textit{-refined to} $\beta$. We call such a homotopy an $\varepsilon $\textit{-refinement}, and refer to $\beta $ as a $\delta $-\textit{\textit{refinement}} of $\alpha $. When no confusion will result, we will often drop the $\varepsilon $'s and $\delta $'s in our notation. Note that if $\alpha $ is an $\varepsilon $-chain in a \textit{geodesic} space then we can refine $\alpha $ to any desired degree of fineness by successively adding midpoints of geodesics joining each pair of consecutive points in the chain. Refinements of this sort play important roles in \cite{PW1} and \cite{PW2}, and the ability (or inability) to refine chains is an important possible feature of non-geodesic spaces.

The set of classes $[\lambda ]_{\varepsilon }$, where $\lambda $ is an $\varepsilon $-loop at $\ast $, forms a group $\pi _{\varepsilon }(X)$ with operation induced by concatenation. This group acts on $X_{\varepsilon }$ by preconcatenation: for any $\varepsilon $-loop $\lambda $, define $h_{\lambda}([\alpha ]_{\varepsilon }):=[\lambda \alpha ]_{\varepsilon }$. Then $h_{\lambda }$ is a bijection that depends only on the $\varepsilon $-equivalence class of $\lambda $ and, as was shown in \cite{BPUU}, $\varphi_{\varepsilon }$ is a well-defined regular covering map with deck group naturally identified with $\pi _{\varepsilon }(X)$. One can think of $\pi_{\varepsilon }(X)$, roughly, as a fundamental group at a specific metric scale. For instance, for a geodesic space $X$ there is always a natural surjective homomorphism $h:\pi _{1}(X)\rightarrow \pi _{\varepsilon }(X)$, the kernel of which is generated by classes of loops that are, roughly, \textquotedblleft small\textquotedblright\ on the scale of $\varepsilon $.

We define a metric $d_{\varepsilon}$ on $X_{\varepsilon}$ as follows:

\begin{definition}
\label{covermetric} The length of an $\varepsilon$-chain, $\alpha=\{x_{0},\dots,x_{n}\}$, in $(X,d)$ is
\begin{equation*}
L(\alpha):=\sum_{i=1}^{n}d(x_{i-1},x_{i})\text{.}
\end{equation*}
The length of the equivalence class $[\alpha]_{\varepsilon}\in X_{\varepsilon}$ is $L([\alpha]_{\varepsilon}):=\inf\{L(\beta):\beta\in \lbrack\alpha]_{\varepsilon }\}$, and for $[\alpha]_{\varepsilon},[\beta]_{\varepsilon} \in X_{\varepsilon}$, we define $d_{\varepsilon}\bigl(\lbrack \alpha]_{\varepsilon},[\beta]_{\varepsilon}\bigr)=L\bigl(\lbrack \alpha^{-1}\beta]_{\varepsilon}\bigr)$.
\end{definition}

\noindent Note that if the initial point of $\beta $ is the terminal point of $\alpha $, then $L(\alpha \beta )=L(\alpha )+L(\beta )$. We also have $L(\alpha)=L(\alpha ^{-1})$ and, hence, $L([\alpha^{-1}]_{\varepsilon})=L([\alpha]_{\varepsilon })$.

It is straightforward to check that $L([\alpha \beta ]_{\varepsilon })\leq L([\alpha ]_{\varepsilon })+L([\beta ]_{\varepsilon })$, from which the triangle inequality for $d_{\varepsilon }$ follows. Symmetry is obvious, and positive definiteness is implied by the otherwise useful fact that if $L([\alpha ]_{\varepsilon })<\varepsilon $, where $\alpha =\{x_{0},\dots,x_{n}\}$ is an $\varepsilon $-chain, then $d(x_{0},x_{n})<\varepsilon $, $\alpha $ is $\varepsilon $-homotopic to the two-point $\varepsilon $-chain $\{x_{0},x_{n}\}$, and $L([\alpha]_{\varepsilon })=d(x_{0},x_{n})$.\ More generally one also sees that for $0<\delta \leq \varepsilon $ and $\varepsilon $-chain $\alpha =\{\ast =x_{0},\dots,x_{n}\}$, $B([\alpha]_{\varepsilon },\delta )\subset X_{\varepsilon }$ consists precisely of those $[\beta ]_{\varepsilon }\in X_{\varepsilon }$ such that $\beta $ is $\varepsilon $-homotopic to an $\varepsilon $-chain of the form $\{\ast=x_{0},\dots ,x_{n},y\}$, where $d(y,x_{n})<\delta $. It also holds that $\varphi _{\varepsilon }$ is $1$-Lipschitz (or distance non-increasing) and an $\frac{\varepsilon}{2}$-local isometry. In fact, every $\frac{\varepsilon}{2}$-ball in $X$ is evenly covered by a union of $\frac{\varepsilon}{2}$-balls in $X_{\varepsilon}$. As with the case of classical covering space theory, change of basepoint in a chain connected space induces a natural equivalence of covering spaces, which with the present metric is an isometry. Therefore we will treat base points very informally, but assuming, when needed, that maps are base point preserving. In particular, if $\ast$ is the base point in $X$ we will always use $[\ast ]_{\varepsilon }$ as the base point in $X_{\varepsilon}$. For brevity we will denote $[\ast]_{\varepsilon}$ by $\widetilde{\ast}$. Finally, the maps $h_{\lambda}:X_{\varepsilon}\rightarrow X_{\varepsilon}$ defining the action of $\pi _{\varepsilon}(X)$ on $X_{\varepsilon}$ are isometries.

There is an identification of fundamental importance, $\iota_{\delta\varepsilon }:X_{\delta }\rightarrow (X_{\varepsilon })_{\delta }$, that is defined by 
\begin{equation*}
\iota _{\delta \varepsilon }\Bigl(\lbrack \ast =x_{0},x_{1},\dots,x_{n}]_{\delta }\Bigr)=\Bigl[\lbrack \ast ]_{\varepsilon},[\ast ,x_{1}]_{\varepsilon },\dots,[\ast ,x_{1},\dots,x_{n}]_{\varepsilon } \Bigr]_{\delta }
\end{equation*}
\noindent and was shown in \cite{BPUU} to be a well-defined uniform homeomorphism. According to the metrics we have defined in the spaces $X_{\varepsilon }$ and $(X_{\varepsilon })_{\delta }$,
\begin{equation*}
d_{\varepsilon}\bigl(\lbrack \ast ,\dots,x_{i}]_{\varepsilon},[\ast,\dots,x_{i},x_{i+1}]_{\varepsilon }\bigr)=d(x_{i},x_{i+1}),
\end{equation*}
and, therefore, 
\begin{equation*}
L\bigl(\{\ast =x_{0},\dots,x_{n}\}\bigr)=L\Bigl(\bigl\{\lbrack\ast]_{\varepsilon },[\ast,x_{1}]_{\varepsilon},\dots,[\ast,x_{1},\dots,x_{n}]_{\varepsilon }\bigr\}\Bigr).
\end{equation*}
\noindent This implies that $\iota _{\delta \varepsilon }$ is in fact an isometry in the case of metric spaces. For one thing, this fact implies that the basic results that we have proved about $\varphi _{\delta}$ hold equally well for the function $\varphi _{\varepsilon \delta}$, including the properties of being Lipschitz and a $\frac{\delta}{2}$-local isometry when it is surjective.

We will denote the restriction of $\varphi _{\varepsilon \delta }$ to $\pi_{\delta }(X)$ by $\Phi _{\varepsilon \delta }:\pi_{\delta}(X)\rightarrow \pi _{\varepsilon }(X)$. These maps are homomorphisms (again, invoke $\iota_{\delta \varepsilon }$), and $\varphi_{\varepsilon \delta }$ is injective (respectively, surjective) if and only if $\Phi_{\varepsilon \delta }$ is injective (respectively, surjective). In the case of geodesic spaces, the functions $\varphi_{\varepsilon \delta }$ are always surjective. In fact, when $X$ is geodesic, the metric $d_{\varepsilon}$ coincides with the lifted geodesic metric (Proposition 24, \cite{PW1}), so that $X_{\varepsilon}$ is path connected and, hence, chain connected. The identification $\iota_{\delta \varepsilon }$ then tells us that $\varphi_{\varepsilon \delta}$ is surjective.

An $\varepsilon$-loop of the form $\{x,y,z,x\}$ will be called an $\varepsilon$\textit{-triangle}. Note that an $\varepsilon$-triangle is necessarily $\varepsilon$-null. If $f:Y\rightarrow X$ is a map between metric spaces, a lift of a chain, $\alpha=\{x_{0},\dots,x_{n}\}$, from $X$ to a point $y\in f^{-1}(x_{0})$ is a chain $\tilde{\alpha}=\{y=\tilde{x}_{0},\dots ,\tilde{x}_{n}\}$ such that $f(\tilde{x}_{i})=x_{i}$ for each $i=0,1,\dots,n$.

The next lemma shows that $X_{\varepsilon}$ largely inherits the local topology and metric properties of $X$. The first part follows directly from the fact that $\varphi_{\varepsilon}:X_{\varepsilon}\rightarrow X$ is an isometry from any $\frac{\varepsilon}{2}$-ball onto its image. The last parts were proved in \cite{BPUU} in the more general setting of uniform spaces.

\begin{lemma}
\label{ecover complete} Let $X$ be a connected metric space. If $X$ is locally compact \emph{(}respectively, complete\emph{)}, then $X_{\varepsilon }$ is locally compact \emph{(}respectively, complete\emph{)}. Furthermore, suppose the $\varepsilon$-balls in $X$ possess any one of the following properties: connected, chain connected, path connected. Then the whole space $X_{\varepsilon }$ is, respectively, connected, chain connected, path connected.
\end{lemma}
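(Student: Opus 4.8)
The plan is to derive everything from the two facts recalled in Section~2: that $\varphi_\varepsilon\colon X_\varepsilon\to X$ is $1$-Lipschitz, and that it restricts to an isometry of each $\tfrac\varepsilon2$-ball of $X_\varepsilon$ onto the corresponding $\tfrac\varepsilon2$-ball of $X$. For local compactness, given $p=[\alpha]_\varepsilon$ and $x:=\varphi_\varepsilon(p)$, I would choose $s<\tfrac\varepsilon2$ with $\overline{B(x,s)}$ compact; since $s<\tfrac\varepsilon2$ the closure $\overline{B(p,s)}$ lies in $B(p,\tfrac\varepsilon2)$, and the isometry $\varphi_\varepsilon|_{B(p,\varepsilon/2)}$ carries it onto $\overline{B(x,s)}$, so $\overline{B(p,s)}$ is a compact neighborhood of $p$. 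For completeness, if $(p_k)$ is Cauchy in $X_\varepsilon$ then $(\varphi_\varepsilon(p_k))$ is Cauchy and converges to some $x\in X$; fixing $N$ with $d_\varepsilon(p_k,p_N)<\tfrac\varepsilon4$ for all $k\ge N$, all such $p_k$ lie in $B(p_N,\tfrac\varepsilon2)$ and $d(x,\varphi_\varepsilon(p_N))\le\tfrac\varepsilon4<\tfrac\varepsilon2$, so the $\varphi_\varepsilon|_{B(p_N,\varepsilon/2)}$-preimage of $x$ is a point $p$ with $d_\varepsilon(p_k,p)=d(\varphi_\varepsilon(p_k),x)\to0$.

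For the connectivity assertions the structural input is that each $[\alpha]_\varepsilon=[\ast=x_0,x_1,\dots,x_n]_\varepsilon$ comes with its chain of partial classes $c_i:=[\ast,x_1,\dots,x_i]_\varepsilon$, which satisfy $c_0=\widetilde{\ast}$, $c_n=[\alpha]_\varepsilon$, and $d_\varepsilon(c_{i-1},c_i)=d(x_{i-1},x_i)<\varepsilon$. Using the description of $\varepsilon$-balls recalled in Section~2 together with the fact that the local isometry $\varphi_\varepsilon$ is open, one checks that $\varphi_\varepsilon$ restricts to a homeomorphism of $B(c_{i-1},\varepsilon)=\{\,c_{i-1}\cdot[x_{i-1},y]_\varepsilon:y\in B(x_{i-1},\varepsilon)\,\}$ onto $B(x_{i-1},\varepsilon)$ carrying $c_{i-1}\mapsto x_{i-1}$ and $c_i\mapsto x_i$. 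Hence if the $\varepsilon$-balls of $X$ are connected (resp.\ path connected), then each $B(c_{i-1},\varepsilon)\subset X_\varepsilon$ is connected (resp.\ path connected) and contains both $c_{i-1}$ and $c_i$; running over $i=1,\dots,n$ places $c_n=[\alpha]_\varepsilon$ in the component of $\widetilde{\ast}$ (resp.\ joins it to $\widetilde{\ast}$ by a path), and since $[\alpha]_\varepsilon$ was arbitrary, $X_\varepsilon$ is connected (resp.\ path connected).

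The chain-connected case is the step I expect to be the real obstacle, because chain-connectedness is a metric rather than a topological property and therefore does not pass across the homeomorphism just used. Here I would argue by lifting instead: given $\delta\in(0,\tfrac\varepsilon2)$, chain-connectedness of $B(x_{i-1},\varepsilon)$ supplies a $\delta$-chain in that ball from $x_{i-1}$ to $x_i$, and since every vertex of this chain lies within $\varepsilon$ of the center $x_{i-1}$, deleting its interior vertices one at a time gives an $\varepsilon$-homotopy rel endpoints down to the two-point chain $\{x_{i-1},x_i\}$. Lifting the $\delta$-chain along $\varphi_\varepsilon$ from $c_{i-1}$ — explicitly, as the sequence of concatenations $c_{i-1}\cdot[x_{i-1},\dots]_\varepsilon$ of its initial segments — produces a $\delta$-chain in $X_\varepsilon$ (consecutive classes differ by the corresponding distance in $X$, which is $<\delta$) terminating at $c_{i-1}\cdot[\{x_{i-1},x_i\}]_\varepsilon=c_i$; concatenating these lifts over $i$ gives a $\delta$-chain in $X_\varepsilon$ from $\widetilde{\ast}$ to $[\alpha]_\varepsilon$, and as $\delta$ and $[\alpha]_\varepsilon$ were arbitrary, $X_\varepsilon$ is chain connected. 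The only genuine bookkeeping — and the part I would write out carefully — is checking that this explicit lift has the right mesh and ends at the correct $c_i$, which is where the cancellation identities for $\varepsilon$-homotopy classes and the relation $c_i=c_{i-1}\cdot[x_{i-1},x_i]_\varepsilon$ enter; the rest simply transcribes local properties of $X$ through $\varphi_\varepsilon$, in the spirit of the uniform-space arguments of \cite{BPUU}.
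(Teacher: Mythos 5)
Your proposal is correct, and it follows the same route the paper indicates: the local compactness/completeness statements come from $\varphi_\varepsilon$ being an isometry on $\tfrac{\varepsilon}{2}$-balls, exactly as in the paper's one-line justification, while for the connectivity statements the paper simply cites \cite{BPUU}, and your argument via the partial classes $c_i=[\ast,x_1,\dots,x_i]_\varepsilon$, the description of $\varepsilon$-balls in $X_\varepsilon$, and explicit lifts of $\delta$-chains is precisely the standard argument from that setting. Your separate treatment of the chain-connected case by lifting (rather than transporting it through a homeomorphism) is the right precaution, since chain connectedness is metric rather than topological, and the bookkeeping you flag ($c_i=c_{i-1}\cdot[x_{i-1},x_i]_\varepsilon$ and preservation of consecutive distances in lifts) is exactly what Lemma \ref{chain lifting ecover} provides.
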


\begin{lemma}[Chain and Homotopy Lifting]
\label{chain lifting} Let $f:Y\rightarrow X$ be a surjective map between metric spaces that is a bijection from $\varepsilon $-balls in $Y$ onto $\varepsilon $-balls in $X$. Let $\alpha =\{x_{0},x_{1},\dots ,x_{n}\}$ be an $\varepsilon $-chain in $X$, and let $\tilde{x}_{0}$ be any point in $f^{-1}(x_0)$. Then $\alpha $ lifts uniquely to an $\varepsilon $-chain $\tilde{\alpha}$ beginning at $\tilde{x}_{0}$. If, in addition, $f$ has the property that the lift of any $\varepsilon $-triangle in $X$ is an $\varepsilon $-triangle in $Y$, and if $\beta $ is an $\varepsilon $-chain that begins at $x_{0}$ and is $\varepsilon $-homotopic to $\alpha $, then the lifts of $\alpha $ and $\beta $ to $\tilde{x}_{0}$ end at the same point and are $\varepsilon $-homotopic.
\end{lemma}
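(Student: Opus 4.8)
The plan is to prove chain lifting by induction on the length of $\alpha$, and then to use it — together with the triangle-lifting hypothesis — to push an $\varepsilon$-homotopy up to $Y$ one basic move at a time.

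First I would establish unique chain lifting. Suppose inductively that the initial segment $\{x_0,\dots,x_{i-1}\}$ has been lifted uniquely to an $\varepsilon$-chain $\{\tilde x_0,\dots,\tilde x_{i-1}\}$ beginning at $\tilde x_0$. Since $d(x_{i-1},x_i)<\varepsilon$, the point $x_i$ lies in $B(x_{i-1},\varepsilon)=B(f(\tilde x_{i-1}),\varepsilon)$, and because $f$ restricts to a bijection of $B(\tilde x_{i-1},\varepsilon)$ onto this ball there is exactly one $\tilde x_i\in B(\tilde x_{i-1},\varepsilon)$ with $f(\tilde x_i)=x_i$. Then $d(\tilde x_{i-1},\tilde x_i)<\varepsilon$, so $\{\tilde x_0,\dots,\tilde x_i\}$ is an $\varepsilon$-chain; conversely any point mapping to $x_i$ that is within distance $\varepsilon$ of $\tilde x_{i-1}$ lies in $B(\tilde x_{i-1},\varepsilon)$ and hence equals $\tilde x_i$, which gives uniqueness. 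This settles the first assertion.

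For the homotopy statement, note that an $\varepsilon$-homotopy fixes endpoints, so $\beta$ also ends at $x_n$; by induction on the number of basic moves it suffices to treat the case where $\beta$ differs from $\alpha$ by a single basic move, the general case following by composing homotopies of the successive lifts and observing that terminal points agree at each stage. Since the desired conclusion is symmetric in $\alpha$ and $\beta$, I may assume the move inserts a point: $\alpha=\{x_0,\dots,x_j,x_{j+1},\dots,x_n\}$ and $\beta=\{x_0,\dots,x_j,y,x_{j+1},\dots,x_n\}$ with $d(x_j,y)<\varepsilon$, $d(y,x_{j+1})<\varepsilon$, and $d(x_j,x_{j+1})<\varepsilon$ (the last because $\alpha$ is an $\varepsilon$-chain). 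Lift $\alpha$ to $\tilde\alpha=\{\tilde x_0,\dots,\tilde x_n\}$ at $\tilde x_0$. By uniqueness the lift of $\beta$ agrees with $\tilde\alpha$ through $\tilde x_j$; let $\tilde y$ be the lift of $y$ in $B(\tilde x_j,\varepsilon)$ and $\tilde x'_{j+1}$ the lift of $x_{j+1}$ in $B(\tilde y,\varepsilon)$. Consider the $\varepsilon$-triangle $\{x_j,y,x_{j+1},x_j\}$: its lift beginning at $\tilde x_j$ is $\{\tilde x_j,\tilde y,\tilde x'_{j+1},z\}$, where $z$ is the lift of $x_j$ in $B(\tilde x'_{j+1},\varepsilon)$. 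By hypothesis this lift is an $\varepsilon$-triangle, which forces $z=\tilde x_j$ and in particular $d(\tilde x'_{j+1},\tilde x_j)<\varepsilon$. Thus $\tilde x'_{j+1}$ and $\tilde x_{j+1}$ both lie in $B(\tilde x_j,\varepsilon)$ and map to $x_{j+1}$, so $\tilde x'_{j+1}=\tilde x_{j+1}$ by injectivity of $f$ there. Hence the lift of $\beta$ is $\{\tilde x_0,\dots,\tilde x_j,\tilde y,\tilde x_{j+1},\dots,\tilde x_n\}$, which ends at $\tilde x_n$ and differs from $\tilde\alpha$ precisely by the basic move that inserts $\tilde y$ (legitimate since all three relevant distances are $<\varepsilon$). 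So the two lifts are $\varepsilon$-homotopic with the same terminal point, completing the induction.

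The two inductions and the repeated appeal to injectivity of $f$ on $\varepsilon$-balls are routine. The only step that genuinely uses the extra hypothesis — and the one delicate point — is the triangle step: without knowing that $\{x_j,y,x_{j+1},x_j\}$ lifts to an $\varepsilon$-triangle, there is no reason the detour through $\tilde y$ should return to $\tilde x_{j+1}$ rather than to some other preimage of $x_{j+1}$, and the homotopy downstairs would fail to lift. A minor bookkeeping point is the removal case, which I have avoided by exploiting the symmetry of the conclusion in $\alpha$ and $\beta$ and reading each basic move as an insertion.
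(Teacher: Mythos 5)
Your proof is correct and follows essentially the same route as the paper's: induction on chain length for unique lifting, reduction to a single basic move, and the triangle-lifting hypothesis to force the detour point back into the same $\varepsilon$-ball so injectivity of $f$ identifies the two lifts of $x_{j+1}$. The only (cosmetic) difference is that you handle the insertion case and dispose of removal by symmetry, while the paper works the removal case and declares the other move similar.
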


\begin{proof}
The first part is proved by induction on the number of points in $\alpha$. If $\alpha$ contains one point, the result is trivial. So, assume the result holds for all chains with $n$ or fewer points for some $n\geq1$. Let $\alpha=\{x_{0},\dots,x_{n}\}$ be an $\varepsilon$-chain with $n+1$ points, and let $\lambda=\{x_{0},\dots,x_{n-1}\}$. Using the inductive hypothesis, let $\tilde{\lambda}=\{\tilde{x}_{0},\dots,\tilde{x}_{n-1}\}$ be the unique lift of $\lambda$ to $\tilde{x}_{0}$. Then $f(\tilde{x}_{n-1})=x_{n-1}$, $f$ is a bijection from $B(\tilde{x}_{n-1},\varepsilon)$ onto $B(x_{n-1},\varepsilon)$, and $x_{n}\in B(x_{n-1},\varepsilon)$. Let $\tilde{x}_{n}$ be the unique point in $B(\tilde{x}_{n-1},\varepsilon)$ mapping to $x_{n}$ under $f$, and let $\tilde{\alpha}=\tilde{\lambda}\{\tilde{x}_{n}\}=\{\tilde{x}_{0},\dots,\tilde{x}_{n-1},\tilde{x}_{n}\}$. Then $\tilde{\alpha}$ is an $\varepsilon$-chain and $f(\tilde{\alpha})=\alpha$, proving existence. If there were another $\varepsilon$-chain, $\bar{\alpha}=\{\tilde{x}_{0}=\bar{x}_{0},\dots,\bar{x}_{n}\}$, beginning at $\tilde{x}_{0}$ and projecting to $\alpha$, then by the uniqueness part of the inductive hypothesis, the first $n$ points of $\bar{\alpha}$ and $\tilde{\lambda}$ must coincide: $\bar {x}_{i}=\tilde{x}_{i}$ for $0\leq i\leq n-1$. The bijectivity of $f$ on $B(\tilde{x}_{n-1},\varepsilon)$ then implies that $\tilde{x}_{n}$ and $\bar{x}_{n}$ must be the same, proving uniqueness.

To prove the second part, it suffices to consider the case in which $\alpha $ and $\beta $ differ by only a basic move, since an $\varepsilon$-homotopy is just a finite sequence of basic moves. Suppose $\beta $ is obtained by removing a point from $\alpha $, say $\alpha=\{x_{0},\dots,x_{i-1},x_{i},x_{i+1},\dots ,x_{n}\}$ and $\beta=\{x_{0},\dots,x_{i-1},x_{i+1},\dots ,x_{n}\}$. Let $\tilde{\alpha}$ and $\tilde{\beta}$ denote the unique lifts of $\alpha $ and $\beta $ to $\tilde{x}_{0}$. By uniqueness, $\tilde{\alpha}$ and $\tilde{\beta}$ must agree for their first $i$ points. Denote $\tilde{\alpha}$ by $\{\tilde{x}_{0},\dots ,\tilde{x}_{i-1},\tilde{x}_{i},\tilde{x}_{i+1},\dots ,\tilde{x}_{n}\}$ and $\tilde{\beta}$ by $\{\tilde{x_{0}},\dots ,\tilde{x}_{i-1},\tilde{y}_{i+1},\dots ,\tilde{y}_{n}\}$. Since we can remove $x_{i}$ from $\alpha $, the loop $\{x_{i-1},x_{i},x_{i+1},x_{i-1}\}$ is an $\varepsilon $-triangle. The lift of this $\varepsilon $-triangle to $\tilde{x}_{i-1}$ is an $\varepsilon $-triangle by hypothesis, and, by uniqueness of lifts, the first three points of that triangle must be $\tilde{x}_{i-1}$, $\tilde{x}_{i} $, and $\tilde{x}_{i+1}$. Since a triangle is a loop, the fourth point of this lift must be $\tilde{x}_{i-1}$. In other words, we have $d_{Y}(\tilde{x}_{i-1},\tilde{x}_{i+1})<\varepsilon $. So, $\tilde{x}_{i+1}$ and $\tilde{y}_{i+1}$ both lie in $B(\tilde{x}_{i-1},\varepsilon )$ and project under $f$ to $x_{i+1}$, implying that $\tilde{x}_{i+1}=\tilde{y}_{i+1}$. Finally, by uniqueness of lifts, the rest of $\tilde{\beta}$ agrees with $\tilde{\alpha}$. Thus, $\tilde{\beta}$ is obtained by removing a point from $\tilde{\alpha}$, so $\tilde{\alpha}\sim _{\varepsilon }\tilde{\beta}$. The case for the other basic move is similar.
\end{proof}

\begin{corollary}
\label{homotopy lifts} Let $X$ be a connected metric space, and let $\alpha $ and $\beta $ be $\varepsilon $-chains beginning at a common point $x\in X$ that are $\varepsilon $-homotopic. Then their lifts, $\tilde{\alpha}$ and $\tilde{\beta}$, to any $\tilde{x}\in \varphi _{\varepsilon }^{-1}(x)$ are $\varepsilon $-homotopic in $X_{\varepsilon }$.
\end{corollary}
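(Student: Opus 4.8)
The plan is to apply the Chain and Homotopy Lifting Lemma (Lemma~\ref{chain lifting}) to the map $f=\varphi_{\varepsilon}\colon X_{\varepsilon}\to X$ with $\tilde{x}_0=\tilde{x}$. Since $X$ is connected it is chain connected, so $\varphi_{\varepsilon}$ is surjective and $\varphi_{\varepsilon}^{-1}(x)\neq\emptyset$; thus the only work is to verify the two hypotheses of that lemma, namely that $\varphi_{\varepsilon}$ restricts to a bijection from each $\varepsilon$-ball of $X_{\varepsilon}$ onto an $\varepsilon$-ball of $X$, and that the lift of every $\varepsilon$-triangle of $X$ is an $\varepsilon$-triangle of $X_{\varepsilon}$.

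For the bijectivity hypothesis I would quote the explicit description of $\varepsilon$-balls in $X_{\varepsilon}$ recorded in Section~2: for an $\varepsilon$-chain $\alpha=\{\ast=x_0,\dots,x_n\}$, the ball $B([\alpha]_{\varepsilon},\varepsilon)$ is precisely the set of classes $[\{\ast,x_1,\dots,x_n,y\}]_{\varepsilon}$ with $d(y,x_n)<\varepsilon$. The endpoint map sends such a class to $y$, hence carries $B([\alpha]_{\varepsilon},\varepsilon)$ onto $B(x_n,\varepsilon)=B(\varphi_{\varepsilon}([\alpha]_{\varepsilon}),\varepsilon)$; and it is injective there because the class $[\{\ast,x_1,\dots,x_n,y\}]_{\varepsilon}$ is determined by $[\alpha]_{\varepsilon}$ together with its endpoint $y$ (two chains of the form $\{\ast,\dots,x_n,y\}$ with the same final point are literally equal). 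So $\varphi_{\varepsilon}$ is a bijection on $\varepsilon$-balls.

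Once bijectivity is in hand, the first (chain-lifting) half of Lemma~\ref{chain lifting} is available, and I would use it to check the triangle hypothesis: given an $\varepsilon$-triangle $\{x,y,z,x\}$ in $X$ and a point $[\alpha]_{\varepsilon}\in\varphi_{\varepsilon}^{-1}(x)$, its lift beginning at $[\alpha]_{\varepsilon}$ is
\[
\bigl\{\,[\alpha]_{\varepsilon},\ [\alpha\{x,y\}]_{\varepsilon},\ [\alpha\{x,y,z\}]_{\varepsilon},\ [\alpha\{x,y,z,x\}]_{\varepsilon}\,\bigr\}.
\]
Since an $\varepsilon$-triangle is $\varepsilon$-null, $\alpha\{x,y,z,x\}\sim_{\varepsilon}\alpha$, so the last entry equals the first and this lift is a four-point $\varepsilon$-loop, i.e.\ an $\varepsilon$-triangle in $X_{\varepsilon}$. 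With both hypotheses verified, the second half of Lemma~\ref{chain lifting}, applied to the $\varepsilon$-homotopic chains $\alpha$ and $\beta$, yields that $\tilde{\alpha}$ and $\tilde{\beta}$ end at the same point and are $\varepsilon$-homotopic in $X_{\varepsilon}$, which is the assertion. The one place that deserves a little attention is the bijectivity claim; as indicated it reduces immediately to the Section~2 description of $\varepsilon$-balls, and everything else is a direct citation of Lemma~\ref{chain lifting}.
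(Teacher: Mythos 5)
Your proposal is correct and takes essentially the same route as the paper: show that $\varepsilon$-triangles in $X$ lift to $\varepsilon$-triangles in $X_{\varepsilon}$ and then invoke Lemma \ref{chain lifting} applied to $\varphi_{\varepsilon}$. The only differences are cosmetic -- you verify the bijectivity-on-$\varepsilon$-balls hypothesis explicitly from the Section 2 description of balls in $X_{\varepsilon}$ (the paper leaves this implicit) and close up the lifted triangle via $\varepsilon$-nullity of $\{x,y,z,x\}$, whereas the paper constructs the candidate triangle from the classes $[\alpha\{z_{0},z_{1}\}]_{\varepsilon}$ and $[\alpha\{z_{0},z_{2}\}]_{\varepsilon}$, checks the three $d_{\varepsilon}$-distances, and appeals to uniqueness of lifts; these amount to the same computation.
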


\begin{proof}
Let $\{z_{0},z_{1},z_{2},z_{0}\}$ be an $\varepsilon $-triangle in $X$, and let $[\alpha ]_{\varepsilon }$ be any point in $\varphi_{\varepsilon}^{-1}(z_{0})$, where $\alpha=\{\ast =x_{0},\dots,x_{n}=z_{0}\}$. Let $\beta =\{\ast =x_{0},\dots ,x_{n}=z_{0},z_{1}\}$ and $\lambda =\{\ast=x_{0},\dots ,x_{n}=z_{0},z_{2}\}$. Then $d_{\varepsilon}([\alpha]_{\varepsilon },[\beta ]_{\varepsilon })$ and $d_{\varepsilon}([\alpha]_{\varepsilon },[\lambda ]_{\varepsilon })$ are less than $\varepsilon $. Moreover, $\beta ^{-1}\lambda \sim _{\varepsilon}\{z_{1},z_{0},z_{2}\}$, and since $d(z_{1},z_{2})<\varepsilon $, we can remove $z_{0}$ from this chain to conclude that $\beta^{-1}\lambda \sim_{\varepsilon}\{z_{1},z_{2}\}$. Thus, $d_{\varepsilon }([\beta]_{\varepsilon },[\lambda]_{\varepsilon})<\varepsilon $. It follows that $\{[\alpha ]_{\varepsilon },[\beta]_{\varepsilon },[\lambda ]_{\varepsilon},[\alpha ]_{\varepsilon }\}$ is an $\varepsilon $-triangle, and it projects under $\varphi_{\varepsilon}$ to $\{z_{0},z_{1},z_{2},z_{0}\}$. Uniqueness of lifts now implies that $\varepsilon $-triangles lift to $\varepsilon $-triangles, so Lemma \ref{chain lifting} then applies.
\end{proof}

\vspace{.1 in} For $\varepsilon$-covers, we can precisely characterize the lifts of $\varepsilon$-chains.

\begin{lemma}
\label{chain lifting ecover} Let $X$ be a connected metric space, and let $\varepsilon >0$ be given. If $\alpha =\{\ast =x_{0},\dots ,x_{n}\}$ is an $\varepsilon $-chain beginning at the base point $\ast \in X$ then the unique lift of $\alpha $ to $\tilde{\ast}=[\ast]_{\varepsilon }\in X_{\varepsilon }$ is given by 
\begin{equation*}
\tilde{\alpha}=\Bigl\{\lbrack \ast]_{\varepsilon},[x_{0},x_{1}]_{\varepsilon},\dots,[x_{0},\dots,x_{n-1}]_{\varepsilon },[x_{0},\dots,x_{n}]_{\varepsilon }= [\alpha]_{\varepsilon }\Bigr\}.
\end{equation*}
In particular, the endpoint of the lift of $\alpha $ is $[\alpha]_{\varepsilon }$ and the distances between consecutive points, as well as the chain length, are preserved in the lift.
\end{lemma}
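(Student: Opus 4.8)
The plan is to write the lift down explicitly, check directly that the proposed chain $\tilde{\alpha}$ is an $\varepsilon$-chain in $X_\varepsilon$ which projects onto $\alpha$ and begins at $\tilde{\ast}$, and then invoke the uniqueness clause of Lemma~\ref{chain lifting} (applied to $f=\varphi_\varepsilon$) to conclude that $\tilde{\alpha}$ is \emph{the} lift. After that, the ``in particular'' assertions are read off at once: the final entry of $\tilde{\alpha}$ is $[x_0,\dots,x_n]_\varepsilon=[\alpha]_\varepsilon$, and the consecutive distances, hence the length, come from the metric $d_\varepsilon$.

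The step requiring care is checking that $\varphi_\varepsilon$ meets the hypothesis of Lemma~\ref{chain lifting}, namely that it restricts to a bijection from each $\varepsilon$-ball of $X_\varepsilon$ onto the corresponding $\varepsilon$-ball of $X$ --- a point slightly stronger than the $\frac{\varepsilon}{2}$-local isometry property asserted above. I would deduce it from the description of metric balls recorded earlier: for $\alpha=\{\ast=x_0,\dots,x_n\}$, the ball $B([\alpha]_\varepsilon,\varepsilon)$ is exactly the set of $[\beta]_\varepsilon$ with $\beta\sim_\varepsilon\{\ast,x_1,\dots,x_n,y\}$ for some $y\in B(x_n,\varepsilon)$. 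Surjectivity of $\varphi_\varepsilon$ from $B([\alpha]_\varepsilon,\varepsilon)$ onto $B(x_n,\varepsilon)=B(\varphi_\varepsilon([\alpha]_\varepsilon),\varepsilon)$ is then clear, and injectivity follows because $\varepsilon$-homotopies preserve endpoints, so a class $[\beta]_\varepsilon$ in the ball mapping to $y$ must equal $[\{\ast,x_1,\dots,x_n,y\}]_\varepsilon$.

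With that in hand, set $\tilde{\alpha}=\{[\ast]_\varepsilon,[x_0,x_1]_\varepsilon,\dots,[x_0,\dots,x_n]_\varepsilon\}$. It begins at $[\ast]_\varepsilon=\tilde{\ast}$, and $\varphi_\varepsilon([x_0,\dots,x_i]_\varepsilon)=x_i$ by definition of the endpoint map, so $\varphi_\varepsilon(\tilde{\alpha})=\alpha$; and $\tilde{\alpha}$ is an $\varepsilon$-chain because, by the identity $d_\varepsilon\bigl([\ast,\dots,x_i]_\varepsilon,[\ast,\dots,x_i,x_{i+1}]_\varepsilon\bigr)=d(x_i,x_{i+1})$ noted above, consecutive entries lie within $d(x_i,x_{i+1})<\varepsilon$ of each other. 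By the uniqueness in Lemma~\ref{chain lifting}, $\tilde{\alpha}$ is the lift of $\alpha$ to $\tilde{\ast}$; hence its endpoint is $[\alpha]_\varepsilon$, its consecutive distances are the numbers $d(x_{i-1},x_i)$ occurring in $\alpha$, and $L(\tilde{\alpha})=\sum_{i=1}^n d(x_{i-1},x_i)=L(\alpha)$. The main obstacle is the $\varepsilon$-ball bijection in the second paragraph; everything else is bookkeeping with $d_\varepsilon$.
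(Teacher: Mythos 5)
Your proposal is correct and takes essentially the same route as the paper: write down the explicit chain, use the identity $d_{\varepsilon}\bigl([\ast,\dots,x_{i}]_{\varepsilon},[\ast,\dots,x_{i},x_{i+1}]_{\varepsilon}\bigr)=d(x_{i},x_{i+1})<\varepsilon$ to see it is an $\varepsilon$-chain projecting to $\alpha$, and conclude by uniqueness of lifts, with the endpoint, distance, and length statements read off immediately. The only difference is that you explicitly verify that $\varphi_{\varepsilon}$ is a bijection from $\varepsilon$-balls onto $\varepsilon$-balls (so Lemma \ref{chain lifting} applies at scale $\varepsilon$), a hypothesis the paper leaves implicit, and your derivation of it from the stated description of the balls $B([\alpha]_{\varepsilon},\delta)$ is valid.
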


\begin{proof}
First, note that, for $i=1,\dots,n$, 
\begin{equation*}
d_{\varepsilon}\bigl(\lbrack x_{0},\dots,x_{i-1}]_{\varepsilon},[x_{0},\dots,x_{i-1},x_{i}]_{\varepsilon}\bigr)=L\bigl(\lbrack x_{i-1},\dots,x_{1},x_{0},x_{1},\dots,x_{i-1},x_{i}]_{\varepsilon}\bigr)
\end{equation*}
\begin{equation*}
=L\bigl(\lbrack x_{i-1},x_{i}]_{\varepsilon}\bigr)=d(x_{i-1},x_{i})< \varepsilon\text{,}
\end{equation*}
since we can successively remove from $\{x_{i-1},\dots,x_1,x_0,x_1,\dots,x_{i-1},x_i\}$ the point $x_{0}$, then each $x_{1}$, and so on via an $\varepsilon$-homotopy. Thus, $\tilde{\alpha}$ is an $\varepsilon$-chain, and clearly $\varphi_{\varepsilon}(\tilde{\alpha})=\alpha$.
\end{proof}

\vspace{.1 in} \noindent The previous lemma immediately yields the following.

\begin{corollary}
\label{trivial loop lifts} If $X$ is a connected metric space and $\varepsilon >0$, then an $\varepsilon $-loop $\gamma $ based at $\ast$ lifts to an $\varepsilon $-loop at $\tilde{\ast}\in X_{\varepsilon }$ if and only if $\gamma $ is $\varepsilon $-null. Thus, any representative of a nontrivial element of $\pi _{\varepsilon }(X)$ lifts open \emph{(}i.e. to a non-loop\emph{)}.
\end{corollary}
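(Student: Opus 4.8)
The plan is to read everything directly off Lemma~\ref{chain lifting ecover}. Write the given $\varepsilon$-loop as $\gamma=\{\ast=x_{0},x_{1},\dots,x_{n}=\ast\}$. First I would invoke Lemma~\ref{chain lifting ecover} to identify its unique lift $\tilde{\gamma}$ to $\tilde{\ast}=[\ast]_{\varepsilon}$ as the chain
\[
\tilde{\gamma}=\bigl\{[\ast]_{\varepsilon},[x_{0},x_{1}]_{\varepsilon},\dots,[x_{0},\dots,x_{n-1}]_{\varepsilon},[\gamma]_{\varepsilon}\bigr\},
\]
whose initial point is $\tilde{\ast}$ and whose terminal point is $[\gamma]_{\varepsilon}$. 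Consequently $\tilde{\gamma}$ is an $\varepsilon$-loop at $\tilde{\ast}$ — that is, has coinciding initial and terminal points — precisely when $[\gamma]_{\varepsilon}=[\ast]_{\varepsilon}=\tilde{\ast}$ in $X_{\varepsilon}$. Unwinding the definition of $\varepsilon$-equivalence, the equality $[\gamma]_{\varepsilon}=[\ast]_{\varepsilon}$ says exactly that $\gamma$ is $\varepsilon$-homotopic to the constant loop $\{\ast\}$, i.e. that $\gamma$ is $\varepsilon$-null. This is the claimed equivalence.

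For the concluding assertion, I would argue as follows. Let $g\in\pi_{\varepsilon}(X)$ be nontrivial and let $\gamma$ be any $\varepsilon$-loop at $\ast$ representing $g$. Since the identity of $\pi_{\varepsilon}(X)$ is the class $[\{\ast\}]_{\varepsilon}$, the hypothesis $g\neq e$ means $\gamma\not\sim_{\varepsilon}\{\ast\}$, i.e. $\gamma$ is not $\varepsilon$-null. By the first part, the lift of $\gamma$ to $\tilde{\ast}$ then has terminal point $[\gamma]_{\varepsilon}\neq\tilde{\ast}$, so it is not a loop — it lifts open. This conclusion does not depend on the chosen representative: any two representatives of $g$ are $\varepsilon$-homotopic, hence are simultaneously $\varepsilon$-null or simultaneously not.

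I expect no genuine obstacle here, since the content is entirely packaged in Lemma~\ref{chain lifting ecover}. The only points meriting a word of care are that ``the lift'' is unambiguous (uniqueness of chain lifts, from Lemma~\ref{chain lifting} and Lemma~\ref{chain lifting ecover}), and that ``$\gamma$ lifts to an $\varepsilon$-loop'' is to be read as ``the unique lift of $\gamma$ to $\tilde{\ast}$ has coinciding endpoints,'' which is what the endpoint identification in Lemma~\ref{chain lifting ecover} controls.
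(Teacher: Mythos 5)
Your proposal is correct and matches the paper's argument: the paper derives this corollary immediately from Lemma~\ref{chain lifting ecover}, exactly as you do, by noting the unique lift of $\gamma$ to $\tilde{\ast}$ ends at $[\gamma]_{\varepsilon}$, which equals $\tilde{\ast}$ precisely when $\gamma$ is $\varepsilon$-null. Nothing further is needed.
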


\begin{lemma}
\label{Xe simply connected} For a connected metric space $X$ and any $\varepsilon >0$, $X_{\varepsilon}$ is $\varepsilon$-connected and $\varepsilon$-simply connected, i.e. every $\varepsilon $-loop based at $\tilde{\ast}\in X_{\varepsilon }$ is $\varepsilon$-null or, equivalently, $\pi _{\varepsilon }(X_{\varepsilon })$ is trivial.
\end{lemma}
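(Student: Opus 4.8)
The plan is to derive both halves of the statement from the chain- and homotopy-lifting results already established for $\varphi_\varepsilon\colon X_\varepsilon\to X$, following the slogan ``project a loop down to $X$, then lift its null-homotopy back up.'' For $\varepsilon$-connectedness I would simply invoke Lemma \ref{chain lifting ecover}: given any $[\alpha]_\varepsilon\in X_\varepsilon$ with $\alpha=\{\ast=x_0,\dots,x_n\}$, its lift $\widetilde{\alpha}=\{[\ast]_\varepsilon,[x_0,x_1]_\varepsilon,\dots,[x_0,\dots,x_n]_\varepsilon=[\alpha]_\varepsilon\}$ is an $\varepsilon$-chain in $X_\varepsilon$ joining $\widetilde{\ast}$ to $[\alpha]_\varepsilon$. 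Concatenating one such chain (reversed) with another then joins any two points of $X_\varepsilon$ by an $\varepsilon$-chain.

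For $\varepsilon$-simple connectedness, let $\widetilde{\gamma}$ be an $\varepsilon$-loop in $X_\varepsilon$ based at $\widetilde{\ast}$, and set $\gamma:=\varphi_\varepsilon(\widetilde{\gamma})$, which is an $\varepsilon$-loop in $X$ based at $\ast$ since $\varphi_\varepsilon$ is distance non-increasing. The one point that needs checking is that $\varphi_\varepsilon$ satisfies the hypotheses of Lemma \ref{chain lifting}: that it is a bijection from each ball $B([\alpha]_\varepsilon,\varepsilon)$ onto $B(\varphi_\varepsilon([\alpha]_\varepsilon),\varepsilon)$ follows from the explicit description of $\varepsilon$-balls in $X_\varepsilon$ recorded in Section 2, and that $\varepsilon$-triangles lift to $\varepsilon$-triangles was verified in the course of proving Corollary \ref{homotopy lifts}. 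Hence $\widetilde{\gamma}$ is the unique lift of $\gamma$ to $\widetilde{\ast}$; since this lift happens to be a loop, Corollary \ref{trivial loop lifts} forces $\gamma$ to be $\varepsilon$-null in $X$.

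Finally I would apply Corollary \ref{homotopy lifts} to the $\varepsilon$-homotopy in $X$ from $\gamma$ to the trivial loop $\{\ast\}$: their lifts to $\widetilde{\ast}$ are $\varepsilon$-homotopic in $X_\varepsilon$, and these lifts are precisely $\widetilde{\gamma}$ and $\{\widetilde{\ast}\}$, so $\widetilde{\gamma}$ is $\varepsilon$-null. As $\pi_\varepsilon(X_\varepsilon)$ is by definition the group of $\varepsilon$-homotopy classes of $\varepsilon$-loops at $\widetilde{\ast}$, this is exactly the triviality of $\pi_\varepsilon(X_\varepsilon)$. I do not expect a genuine obstacle here: the argument is essentially bookkeeping with the lifting lemmas, and the only mildly delicate step is confirming the two hypotheses of Lemma \ref{chain lifting} for $\varphi_\varepsilon$, both of which are already implicit in the preceding development.
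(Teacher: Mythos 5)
Your proposal is correct and follows essentially the same route as the paper's proof: use Lemma \ref{chain lifting ecover} for $\varepsilon$-connectedness, project a loop $\tilde{\gamma}$ down to $\gamma$ in $X$, conclude $\gamma$ is $\varepsilon$-null because its lift closes up (Corollary \ref{trivial loop lifts}), and lift the null-homotopy back via Corollary \ref{homotopy lifts}. Your extra care in verifying the hypotheses of Lemma \ref{chain lifting} for $\varphi_{\varepsilon}$ is sound but is exactly what the paper leaves implicit, having been established in the proof of Corollary \ref{homotopy lifts} and the ball description in Section 2.
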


\begin{proof}
The $\varepsilon$-connectivity follows from Lemma \ref{chain lifting ecover}. Given an $\varepsilon$-loop $\tilde{\gamma}$ at $\tilde{\ast}\in X_{\varepsilon}$, it will project to an $\varepsilon$-loop, $\gamma:=\varphi_{\varepsilon}(\tilde{\gamma})$, at $\ast$. Since $\gamma$ lifts to a closed loop, it is $\varepsilon$-null. By Corollary \ref{homotopy lifts}, this $\varepsilon$-nullhomotopy will lift to $X_{\varepsilon}$.
\end{proof}

We will postpone some concrete examples until the next section when we introduce the critical spectrum.

\begin{remark}
\label{SWR} In their definition of the covering spectrum, Sormani-Wei use the condition $\widetilde{X}^{\delta }\neq $ $\widetilde{X}^{\delta ^{\prime}}$ for all $\delta ^{\prime }>\delta $, but from their proofs it is clear that they take this to mean non-equivalence of the corresponding covering maps $\pi ^{\sigma }$ and $\pi ^{\delta }$. In fact, it seems to be an interesting open question (in our terminology) whether it is possible for $X_{\varepsilon }$ and $X_{\delta }$ to be homeomorphic when $\varphi_{\varepsilon }$ and $\varphi _{\delta }$ are not equivalent. Recall that it is possible in general for non-equivalent covers to involve homeomorphic spaces - for example with $n$-fold covers of the circle. However, these specific covers may not be $\varepsilon $-covers of the geodesic circle with any compatible metric, as we conjectured in the introduction.
\end{remark}

\section{The Critical Spectrum}

We begin by defining the two aforementioned fundamental types of critical values, the first of which was originally introduced in \cite{PW1}.

\begin{definition}
\label{homotopy cv def} A number $\varepsilon >0$ is a \textit{homotopy critical value} of $X$ if there is an $\varepsilon $-loop $\gamma $ in $X$ that is not $\varepsilon $-null but is $\delta$-null for all $\delta >\varepsilon $ (when $\gamma $ is considered as a $\delta $-chain). Such a $\gamma $ is an \textit{essential $\varepsilon $-loop}. The \textit{homotopy critical spectrum} of $X$ is the set $H(X)$ of all homotopy critial values.
\end{definition}

\begin{definition}
\label{homotopy refinement cv definition} Let $\alpha =\{x,y\}$ be a two-point chain in a metric space $X$ such that the following hold: \emph{1)} $d(x,y)=\varepsilon >0$; \emph{2)} for all $\delta $ greater than but sufficiently close to $\varepsilon $, $\alpha$ is not $\delta $-homotopic to an $\varepsilon $-chain. Then we say that $\varepsilon$ is a refinement critical value of $X$ and that $\{x,y\}$ is an essential $\varepsilon $-gap, or just an essential gap when $\varepsilon$ is clear. The set of all refinement critical values of $X$ is denoted by $R(X)$.
\end{definition}

Of course, since $d(x,y) = \varepsilon$, an essential $\varepsilon$-gap $\{x,y\}$ is not an $\varepsilon $-chain, and the point of the above definition is that, in a sense, we cannot \textquotedblleft refine\textquotedblright\ it to one either. The existence (or non-existence) of essential gaps is closely connected to the surjectivity of the maps $\varphi_{\varepsilon\delta }$; see Lemma \ref{characterize surjective}.

Even adding refinement critical values to the mix, however, does not tell the full story, and to completely understand the behavior of the $\varepsilon $-covers we need the following general definition of a critical value.

\begin{definition}
\label{crit value def} Let $X$ be a chain connected metric space. A non-critical interval of $X$ is a non-empty open interval $I\subset \mathbb{R}_{+}$ such that for each $\delta <\varepsilon $ in $I$, the map $\varphi_{\varepsilon \delta }:X_{\delta }\rightarrow X_{\varepsilon }$ is bijective. We call $\varepsilon > 0$ a critical value of $X$ if and only if
it does not lie in a non-critical interval. The set $Cr(X)$ of all critical values of $X$ is called the critical spectrum of $X$.
\end{definition}

\noindent There are two immediate consequences of the definition. First, $Cr(X)$ is bounded above by $diam(X)$ when $X$ is compact (or simply bounded). Second, the set of non-critical values, ${\mathbb{R}}_{+}\setminus Cr(X)$, is open in ${\mathbb{R}}_{+}$. This means that $Cr(X)$ is closed in $\mathbb{R}_+$, although $0$ may be a limit point of the critical spectrum.

Even though we are interested largely in non-geodesic spaces, it is useful to sketch out what happens in some simple but illustrative cases by starting with the geodesic case. Two non-geodesic examples follow the first two, and more are given in Section 4.

\begin{example}
\emph{Consider $X=S^1$, the geodesic circle of circumference $1$. The details of this example are given in \cite{W2}. If $\varepsilon > \frac{1}{3}$, then all $\varepsilon$-loops are $\varepsilon$-null, meaning that the $\varepsilon$-cover $X_{\varepsilon}$ is isometric to $X$ and is, therefore, the trivial cover. For any $0 < \varepsilon \leq \frac{1}{3}$, however, the $\varepsilon$-loop that wraps around the circle one time (in either direction) is no longer $\varepsilon$-null. The $\varepsilon$-cover for any $\varepsilon \leq \frac{1}{3}$ is now $\mathbb{R}$, the universal cover, and these covers ``unravel'' the hole in $X$. Since there is a $\frac{1}{3}$-loop in $X$ that is not $\frac{1}{3}$-null but is $\delta$-null for all $\delta > \frac{1}{3}$, it follows that $\frac{1}{3}$ is a homotopy critical value of $X$. It is, in fact, the only critical value of $X$, since $X_{\varepsilon}$ is universal for $0 < \varepsilon \leq \frac{1}{3}$. $\blacksquare$}
\end{example}

While basic, this first example illustrates some important phenomena. First, as noted in Corollary \ref{geodesic crs}, the critical spectrum of a compact geodesic space $X$ is simply its homotopy critical spectrum. Furthermore, as we mentioned previously, one of the fundamental results concerning the covering/homotopy critical spectra of a compact geodesic space $X$ is that this set is discrete in $\mathbb{R}_{+}:=(0,\infty )$ when $X$ is compact. These properties do not hold for general metric spaces. The discreteness was first proved by Sormani and Wei for their covering spectrum in \cite{SW2}, and a stronger result was later established by Plaut and Wilkins for the homotopy critical spectrum in \cite{PW1}. Thus, the situation in the previous example is, by extension, typical of compact geodesic spaces. One can imagine $\varepsilon >0$ sliding continuously along the positive real axis from the diameter of $X$ towards $0$ and consider the corresponding covers $\varphi_{\varepsilon}:X_{\varepsilon }\rightarrow X$. For $\varepsilon \geq diam(X)$, the covering map $\varphi _{\varepsilon }$ is trivial. As $\varepsilon $ decreases to $0$, the covering map remains trivial for a while until $\varepsilon $ is the first critical value of $X$, if one exists. At that point, the equivalence class of $\varphi_{\varepsilon}:X_{\varepsilon}\rightarrow X$ and the $\varepsilon$-group change from the trivial cover and group, respectively, to the next cover and group in the sequence. The covering map $\varphi _{\varepsilon }$ remains the same for some non-trivial interval, then changes again at the next critical value - again, if one exists.

The common discreteness of both the critical and covering spectra in the compact geodesic case is actually a consequence of a more general result. Plaut and Wilkins also showed in \cite{PW2} that despite the apparent difference in methods, the Berestovskii-Plaut construction, when applied to geodesic spaces, yields the same covers as the $\delta$-covers used by Sormani-Wei, for $\varepsilon =\frac{2\delta }{3}$. It follows that the covering spectrum and homotopy critical spectrum of a compact geodesic space differ only by a multiplicative factor of $\frac{2}{3}$. However, the requirement of chain connectedness is in general weaker than connectedness, and is even satisfied by some totally disconnected spaces. Therefore, the requirement of local path connectedness in the Spanier construction used by Sormani-Wei may be dropped when using discrete chains and homotopies. As was noted in the introduction, this opens the way to investigate all of these topics for non-geodesic spaces.

\begin{example}
\emph{For the geodesic Hawaiian Earring determined by circles with distinct circumferences $d_{i}\rightarrow 0$, the homotopy critical spectrum is $\{\frac{d_{1}}{3},\frac{d_{2}}{3},\dots\}$. At each of the critical values, another circle is \textquotedblleft unrolled,\textquotedblright\ and $X_{\varepsilon }$ consists of an infinite tree with successively smaller Hawaiian Earrings attached to each vertex. $\blacksquare$}
\end{example}

The previous example shows that for a compact geodesic space that is not semilocally simply connected, the critical spectrum is still discrete but may not have a positive lower bound. This is not true in general, though; there are compact geodesic spaces that are not semilocally simply connected but have finite or even empty critical spectra (cf. \cite{W3}). In general, for a compact, semilocally simply connected geodesic space, $X_{\varepsilon} $ is the universal cover of $X$ for all sufficiently small $\varepsilon >0$; in fact, a stronger statement holds in the more general setting of uniform spaces (Theorem 88, \cite{BPUU}). Note that in this case it is interesting to consider the so-called \textit{uniform universal cover of $X$} (\cite{BPUU}), which is the inverse limit of the covers $\varphi_{\varepsilon}:X_{\varepsilon }\rightarrow X$ as $\varepsilon \rightarrow 0$. The bonding maps are the functions $\varphi_{\varepsilon \delta }$ mentioned earlier. The uniform universal cover - which exists for all compact geodesic spaces, even if they are not semilocally simply connected - has the same lifting, regularity, and categorical universality properties as the traditional universal cover, though it is not generally a classical covering space defined by evenly covered neighborhoods. In this paper, however, we are only interested in the maps $\varphi _{\varepsilon }$ and $\varphi _{\delta\varepsilon }$ and not the inverse limit space.

The next two examples are non-geodesic, and Example \ref{geodesic circle with gap} gives a simple example of a refinement critical value.

\begin{example}
\label{euclidean square} \emph{Let $X$ be the square of side-length $s$ with its (non-geodesic!) Euclidean metric inherited from $\mathbb{R}^{2}$. It is not hard to see that, for $\varepsilon>s$, $X_{\varepsilon}$ is the trivial cover - while for $0<\varepsilon\leq s$, $X_{\varepsilon}$ is the universal cover, which is homeomorphic to $\mathbb{R}$. $\blacksquare$}
\end{example}

\begin{example}
\label{geodesic circle with gap} \emph{Let $S^{1}$ be the geodesic circle of circumference $1$. Fix $a,b\in S^{1}$ so that $d(a,b)=\frac{1}{4}$, and remove the open geodesic segment from $a$ to $b$. Let $X\subset S^1$ be the resulting set with the inherited subspace metric, not the induced geodesic metric, which would just make $X$ a line segment. Choose as the base point $* $ the midpoint of the longer segment from $a$ to $b$. See Figure \ref{fig:gap_circle} (showing the full sequence of $\varepsilon$-covers). As in the case of $S^{1}$, $X_{\varepsilon}$ is the trivial cover for $\varepsilon > \frac{1}{3}$. Since $\varepsilon $ is larger than the gap we created by removing the segment, an $\varepsilon$-loop can cross over the gap; informally speaking, $X_{\varepsilon}$ does not ``see'' the gap. For $\frac{1}{4} <\varepsilon \leq \frac{1}{3}$, one can show that $\pi_{\varepsilon}(X)\cong \mathbb{Z}$. In this case, $X_{\varepsilon}$ is the subspace of $\mathbb{R}$ with the open segments $(\frac{n}{2}-\frac{1}{8},\frac{n}{2}+\frac{1}{8})$, $n\in \mathbb{Z}$, removed. The $\varepsilon$-group, $\pi_{\varepsilon}(X)$, acts by shifts, as in the case of $S^{1}$ itself. Intuitively, we still unravel the circle as in the standard geodesic case, but the missing segment of the circle gets unraveled along with it. Note that $X_{\varepsilon }$ is not connected in this case.}

\begin{figure}[hbt]
\centering
\includegraphics[scale=1.0]{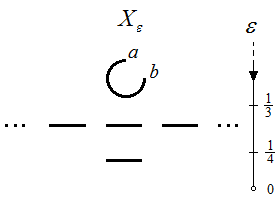}
\caption{The $\protect\varepsilon$-covers of the geodesic circle with a gap.}
\label{fig:gap_circle}
\end{figure}

\emph{Now, suppose $0<\varepsilon \leq \frac{1}{4}$. Since the removed segment has length $\frac{1}{4}$, it becomes the case at $\varepsilon =\frac{1}{4}$ that no $\varepsilon $-loop can cross the gap. Thus, the $\varepsilon$-covers now recognize the gap, and it is now impossible to travel around the circle via an $\varepsilon $-loop. In other words, all $\varepsilon $-loops are necessarily trivial, and $\pi _{\varepsilon }(X)$ is the trivial group. The pair $\{a,b\}$ is, in fact, an essential gap, and $\frac{1}{4}$ is a refinement critical value. In this case, $H(X)=\left\{\frac{1}{3}\right\}$, $R(X) = \left\{\frac{1}{4}\right\}$, and $Cr(X) = \left\{\frac{1}{4},\frac{1}{3}\right\}$.}

\emph{A subtle point deserves mention: $\varphi_{\varepsilon}:X_{\varepsilon}\rightarrow X$ is topologically trivial for $\varepsilon >\frac{1}{3}$ and $0<\varepsilon \leq \frac{1}{4}$, but it is only an isometry only for $\varepsilon >\frac{1}{3}$. The reason for this is that, for $\varepsilon\leq \frac{1}{4}$, the metric that we have defined makes $X_{\varepsilon }$ isometric to a Euclidean segment. In fact, the changes in the topology of the covering space are measuring geometric features of the space rather than topological features. This example illustrates two things that can occur in non-geodesic spaces but not in geodesic ones: the covering space $X_{\varepsilon }$ may not be connected even when $X$ is path connected and locally path connected, and $\pi_{\varepsilon }(X)$ may be non-trivial even when $\pi _{1}(X)$ is trivial. $\blacksquare$}
\end{example}

We now continue to analyze the general critical spectrum.

\begin{lemma}
\label{characterize surjective} Let $X$ be a metric space, and let $0<\delta <\varepsilon $ be given. The following are equivalent: \emph{1)} $\varphi_{\varepsilon \delta}:X_{\delta} \rightarrow X_{\varepsilon}$ is surjective; \emph{2)} every $\varepsilon $-chain in $X$ can be $\varepsilon $-refined to a $\delta $-chain; \emph{3)} every two-point $\varepsilon$-chain in $X$ can be $\varepsilon $-refined to a $\delta $-chain.
\end{lemma}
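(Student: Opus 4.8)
The plan is to prove the chain of implications $(1)\Rightarrow(2)\Rightarrow(3)\Rightarrow(1)$, the middle one being trivial. The key tool throughout will be Lemma~\ref{chain lifting ecover}, which identifies the lift of an $\varepsilon$-chain $\alpha=\{\ast=x_0,\dots,x_n\}$ to $\widetilde{\ast}\in X_\varepsilon$ with the chain of partial classes $\{[\ast]_\varepsilon,[x_0,x_1]_\varepsilon,\dots,[\alpha]_\varepsilon\}$, whose consecutive distances equal $d(x_{i-1},x_i)$. Combined with the identification $\iota_{\delta\varepsilon}\colon X_\delta\to(X_\varepsilon)_\delta$, this lets us translate ``$\varphi_{\varepsilon\delta}$ is surjective'' into a statement about refining chains downstairs.

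For $(1)\Rightarrow(2)$: suppose $\varphi_{\varepsilon\delta}$ is surjective and let $\alpha=\{\ast=x_0,\dots,x_n\}$ be an $\varepsilon$-chain in $X$; I may assume it begins at the base point since basepoint changes are isometries. Then $[\alpha]_\varepsilon\in X_\varepsilon$, so by surjectivity there is a $\delta$-chain $\beta=\{\ast=y_0,\dots,y_m\}$ in $X$ with $\varphi_{\varepsilon\delta}([\beta]_\delta)=[\beta]_\varepsilon=[\alpha]_\varepsilon$. Thus $\alpha$ and $\beta$ are $\varepsilon$-homotopic, and $\beta$ is a $\delta$-chain, which is precisely the statement that $\alpha$ can be $\varepsilon$-refined to a $\delta$-chain. (If one wants $\beta$ to literally begin at the same point, note the classes agree at the basepoint; this is automatic.)

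The implication $(2)\Rightarrow(3)$ is immediate since a two-point $\varepsilon$-chain is in particular an $\varepsilon$-chain. The substantive direction, and the main obstacle, is $(3)\Rightarrow(1)$: from the ability to refine merely \emph{two-point} $\varepsilon$-chains, one must recover surjectivity of $\varphi_{\varepsilon\delta}$ on \emph{all} of $X_\varepsilon$. The idea is to build a preimage class step by step along a chain representing an arbitrary point of $X_\varepsilon$. Given $[\alpha]_\varepsilon\in X_\varepsilon$ with $\alpha=\{\ast=x_0,\dots,x_n\}$ an $\varepsilon$-chain, apply hypothesis (3) to each consecutive pair $\{x_{i-1},x_i\}$ to get a $\delta$-chain $\sigma_i$ from $x_{i-1}$ to $x_i$ with $\sigma_i\sim_\varepsilon\{x_{i-1},x_i\}$. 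Concatenating, $\sigma:=\sigma_1\sigma_2\cdots\sigma_n$ is a $\delta$-chain from $\ast$ to $x_n$, and using the Lemma on concatenation of $\varepsilon$-homotopic chains (the first Lemma in Section~2) one gets $\sigma\sim_\varepsilon\sigma_1\{x_1\}\sigma_2\cdots\sim_\varepsilon\{x_0,x_1\}\{x_1,x_2\}\cdots\{x_{n-1},x_n\}=\alpha$. Hence $[\sigma]_\delta\in X_\delta$ satisfies $\varphi_{\varepsilon\delta}([\sigma]_\delta)=[\sigma]_\varepsilon=[\alpha]_\varepsilon$, so $\varphi_{\varepsilon\delta}$ is surjective. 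The point to be careful about is that $\sigma_i$ need only be a $\delta$-chain \emph{from $x_{i-1}$ to $x_i$} in $X$ (not beginning at $\ast$), and that the concatenation $\sigma$ is then a legitimate $\delta$-chain because the endpoint of $\sigma_i$ is the initial point of $\sigma_{i+1}$; the $\varepsilon$-homotopy from $\sigma$ to $\alpha$ is assembled from the individual $\varepsilon$-homotopies $\sigma_i\sim_\varepsilon\{x_{i-1},x_i\}$ using well-definedness of concatenation on $\varepsilon$-classes. No genuine difficulty remains beyond bookkeeping.
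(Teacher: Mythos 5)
Your proposal is correct and uses the same ingredients as the paper's (much terser) proof: the definition of $\varphi_{\varepsilon\delta}$ translates surjectivity into $\varepsilon$-refinability of chains based at $\ast$, and concatenation of refined two-point subchains handles the passage from 3) to the general case. The only point the paper glosses over and you handle explicitly (via basepoint invariance) is that 2) concerns arbitrary $\varepsilon$-chains, not just those beginning at the base point; your treatment of this is fine.
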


\begin{proof}
The equivalence of $2$ and $3$ is obvious, since an $\varepsilon$-chain is the concatenation of two-point $\varepsilon$-chains. The equivalence of 1 and parts 2 and 3 follows from the definition of $\varphi_{\varepsilon\delta}$. Specifically, given $[\alpha]_{\varepsilon} \in X_{\varepsilon}$, the existence of an element $[\beta]_{\delta} \in X_{\delta}$ such that $[\beta]_{\varepsilon} = \varphi_{\varepsilon \delta}([\beta]_{\delta}) = [\alpha]_{\varepsilon}$ means precisely that $\alpha$ is $\varepsilon$-homotopic to the $\delta$-chain $\beta$.
\end{proof}

In the following, we may occasionally need to refer to the bonding map between two covers $X_{\delta}$ and $X_{\varepsilon}$ when either $\delta<\varepsilon $ or vice versa is possible. In this case, we will simply refer to ``the map between $X_{\delta }$ and $X_{\varepsilon }$,'' and this will mean $\varphi_{\varepsilon \delta}$ if $\delta < \varepsilon$ and $\varphi_{\delta \varepsilon}$ if $\varepsilon < \delta$.

\begin{lemma}
\label{critical value seq} A positive number $\varepsilon $ is in $Cr(X)$ if and only if there is a sequence $\{\varepsilon _{n}\}$ such that $\varepsilon_{n}\neq \varepsilon$ for all $n$, $\varepsilon _{n}\rightarrow \varepsilon$, and the map between $X_{\varepsilon }$ and $X_{\varepsilon_{n}} $ is not bijective for all $n$.
\end{lemma}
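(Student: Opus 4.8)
The plan is to unwind both directions directly from Definition~\ref{crit value def}, which characterizes critical values via non-critical intervals. The content of the lemma is essentially a restatement of that definition in sequential form, so the proof is a routine exercise in negation and in patching together intervals; the one genuine subtlety is handling the composition relation $\varphi_{\varepsilon\delta}=\varphi_{\varepsilon\lambda}\circ\varphi_{\lambda\delta}$ to propagate bijectivity across an interval from bijectivity of finitely (or countably) many bonding maps.

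First I would prove the contrapositive of the ``only if'' direction: suppose no such sequence exists. Then there is a punctured neighborhood $(\varepsilon-\eta,\varepsilon+\eta)\setminus\{\varepsilon\}$ on which the map between $X_\varepsilon$ and $X_{\varepsilon_n}$ is bijective for every point. I claim $I=(\varepsilon-\eta,\varepsilon+\eta)$ is a non-critical interval, which shows $\varepsilon\notin Cr(X)$. Take $\delta<\varepsilon'$ in $I$. If both $\delta,\varepsilon'$ lie on the same side of $\varepsilon$, or if one of them equals $\varepsilon$ is impossible since we may shrink; the cases to handle are $\delta<\varepsilon'<\varepsilon$, $\varepsilon<\delta<\varepsilon'$, and $\delta<\varepsilon<\varepsilon'$ (also $\delta<\varepsilon'=\varepsilon$ and $\varepsilon=\delta<\varepsilon'$ when $\varepsilon\in I$, but these follow from the hypothesis directly). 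In the mixed case $\delta<\varepsilon<\varepsilon'$, factor $\varphi_{\varepsilon'\delta}=\varphi_{\varepsilon'\varepsilon}\circ\varphi_{\varepsilon\delta}$; both factors are bijective by hypothesis (using $\varphi_{\varepsilon\delta}$ with $\delta$ playing the role of $\varepsilon_n$, and $\varphi_{\varepsilon'\varepsilon}$ with $\varepsilon'$ in that role), hence $\varphi_{\varepsilon'\delta}$ is bijective. For $\delta<\varepsilon'<\varepsilon$, I need that $\varphi_{\varepsilon'\delta}$ is bijective knowing only that $\varphi_{\varepsilon\delta}$ and $\varphi_{\varepsilon\varepsilon'}$ are. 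Injectivity of $\varphi_{\varepsilon'\delta}$ is immediate from injectivity of $\varphi_{\varepsilon\delta}=\varphi_{\varepsilon\varepsilon'}\circ\varphi_{\varepsilon'\delta}$. Surjectivity is the delicate point: from $\varphi_{\varepsilon\varepsilon'}\circ\varphi_{\varepsilon'\delta}$ surjective we get $\varphi_{\varepsilon\varepsilon'}$ surjective, which we already knew, but not directly $\varphi_{\varepsilon'\delta}$ surjective. However, $\varphi_{\varepsilon\varepsilon'}$ is a bijection, so from $\operatorname{im}(\varphi_{\varepsilon\varepsilon'}\circ\varphi_{\varepsilon'\delta})=X_\varepsilon$ and $\varphi_{\varepsilon\varepsilon'}$ bijective we conclude $\operatorname{im}(\varphi_{\varepsilon'\delta})=\varphi_{\varepsilon\varepsilon'}^{-1}(X_\varepsilon)=X_{\varepsilon'}$, giving surjectivity. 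The case $\varepsilon<\delta<\varepsilon'$ is symmetric, using $\varphi_{\varepsilon'\varepsilon}=\varphi_{\varepsilon'\delta}\circ\varphi_{\delta\varepsilon}$ with both ends bijective.

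For the ``if'' direction, suppose such a sequence $\{\varepsilon_n\}$ exists; I must show $\varepsilon\notin{\mathbb R}_+\setminus Cr(X)$, i.e.\ $\varepsilon$ lies in no non-critical interval. Suppose for contradiction $\varepsilon\in I$ with $I$ non-critical. Since $\varepsilon_n\to\varepsilon$ and $\varepsilon_n\neq\varepsilon$, infinitely many $\varepsilon_n$ lie in $I$; fix one such $\varepsilon_n\in I$. If $\varepsilon_n<\varepsilon$ then $\varphi_{\varepsilon\varepsilon_n}$ is the map between them and it is bijective because $\varepsilon_n<\varepsilon$ both lie in $I$; if $\varepsilon_n>\varepsilon$ then $\varphi_{\varepsilon_n\varepsilon}$ is bijective for the same reason. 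Either way the map between $X_\varepsilon$ and $X_{\varepsilon_n}$ is bijective, contradicting the defining property of the sequence. Hence $\varepsilon\in Cr(X)$.

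The only step that required any thought is the propagation of bijectivity across an interval from bijectivity of the two bonding maps landing in $X_\varepsilon$, specifically the surjectivity half in the case $\delta<\varepsilon'<\varepsilon$; I expect to phrase that cleanly by noting that if $g\circ f$ has full image and $g$ is injective then $f$ has full image. Everything else is bookkeeping with the composition relation $\varphi_{\varepsilon\delta}=\varphi_{\varepsilon\lambda}\circ\varphi_{\lambda\delta}$ and the observation (from Lemma~\ref{characterize surjective} and its injective counterpart discussed in the text) that bijectivity of $\varphi_{\varepsilon\delta}$ is equivalent to bijectivity of $\Phi_{\varepsilon\delta}$, though I do not think the group version is even needed here.
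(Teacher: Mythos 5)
Your proposal is correct and is essentially the paper's argument: the ``if'' direction is identical (a point of a non-critical interval cannot be the limit of the $\varepsilon_n$), and your ``only if'' direction is just the contrapositive form of the paper's, resting on the same composition relation $\varphi_{\varepsilon\delta}=\varphi_{\varepsilon\lambda}\circ\varphi_{\lambda\delta}$ and the same two-out-of-three bijectivity observation (the paper phrases it as: if both maps to/from $X_\varepsilon$ were bijective, the non-bijective bonding map $\varphi_{r_2r_1}$ would be too). Your handling of the surjectivity step in the case $\delta<\varepsilon'<\varepsilon$ is the right justification of what the paper leaves implicit, so no gap.
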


\begin{proof}
If such a sequence exists, then $\varepsilon $ cannot lie in a non-critical interval, for if it did then $\varepsilon _{n}$ would also lie in that interval for all large $n$, contradicting that the map between $X_{\varepsilon}$ and $X_{\varepsilon_n}$ is not bijective.

Conversely, suppose $\varepsilon \in Cr(X)$. Then there is no non-critical interval containing $\varepsilon $. So there are two positive numbers, $r_{1} $ and $r_{2}$, in $(\varepsilon -1,\varepsilon +1)$ such that $r_{1}<r_{2}$ and the map $\varphi _{r_{2}r_{1}}:X_{r_{1}}\rightarrow X_{r_{2}}$ is not bijective. We will show that at least one of the maps between $X_{\varepsilon }$ and $X_{r_{1}}$, $X_{r_{2}}$ must be non-bijective. In other words, we can choose $\varepsilon _{1}$ to be $r_{1}$ or $r_{2}$, and the proof is complete by iterating this process to construct $\{\varepsilon_{n}\}$. If $\varepsilon $ equals $r_{1}$ or $r_{2}$, we are done. If $r_{1}<r_{2}<\varepsilon$, then $\varphi_{\varepsilon r_{1}}=\varphi_{\varepsilon r_{2}}\circ \varphi _{r_{2}r_{1}}$. If both $\varphi_{\varepsilon r_{1}}$ and $\varphi _{\varepsilon r_{2}}$ were bijective, then $\varphi _{r_{2}r_{1}}$ would be also, a contradiction. The proofs for the other two possible orderings are similar.
\end{proof}

\vspace{.1 in} In light of this result, there are four possible ways in which a critical value might occur. That is, $\varepsilon \in Cr(X)$ if and only if one or more of the following hold (the sloped arrows indicate strictly increasing or decreasing sequences):

\begin{enumerate}
\item[1)] There is a sequence, $\varepsilon _{n}\searrow \varepsilon $, such that the map $\varphi _{\varepsilon_{n}\varepsilon}:X_{\varepsilon}\rightarrow X_{\varepsilon _{n}}$ is non-injective (resp. non-surjective) for all $n$. In these cases, we say that $X$ is $\varepsilon$\textit{-upper non-injective} (resp. $\varepsilon$\textit{-upper non-surjective}), and we call $\varepsilon$ an \textit{upper non-injective critical value} (resp. \textit{upper non-surjective critical value}).

\item[2)] There is a sequence, $\varepsilon _{n}\nearrow \varepsilon $, such that the map $\varphi_{\varepsilon\varepsilon_n}:X_{\varepsilon_{n}}\rightarrow X_{\varepsilon }$ is non-injective (resp. non-surjective) for all $n$. In these cases, we say that $X$ is $\varepsilon$\textit{-lower non-injective} (resp. $\varepsilon $\textit{-lower non-surjective}), and we call $\varepsilon$ a \textit{lower non-injective critical value} (resp. \textit{lower non-surjective critical value}).
\end{enumerate}

\noindent As we will see in the last section, each of these possibilities may occur in a compact metric space, and, in fact, two or more of these cases may simultaneously hold for a given $\varepsilon$.

\begin{remark}
\label{refinable} Spaces for which all maps $\varphi _{\varepsilon \delta }$ are surjective exhibit a much stronger connection between critical values and topology. Such spaces - which include, for example, all geodesic spaces, all convex subsets of Euclidean space, and all Peano continua - are called refinable. By Definition \ref{homotopy refinement cv definition}, such spaces have no esssential gaps and, thus, no refinement critical values.
\end{remark}

If $\varepsilon $ is a homotopy critical value of $X$, then the maps $X_{\varepsilon }\rightarrow X_{\varepsilon +t}$ are non-injective for all $t>0$, meaning that $X$ is $\varepsilon$-upper non-injective. The converse of this statement is true when $X$ is compact geodesic; that is, every upper non-injective critical value is a homotopy critical value. If $\varepsilon$ is a refinement critical value of $X$ then the maps $X_{\varepsilon}\rightarrow X_{\varepsilon +t}$ are non-surjective for all sufficiently small $t$, and $\varepsilon $ is an upper non-surjective critical value. The converse of this statement is not true in general. Example \ref{geodesic circle with gap} shows that the two notions may coincide, but it may also be the case that $\varepsilon$ is an upper non-surjective critical value when there is no essential $\varepsilon $-gap, as illustrated in Examples \ref{noncompact non-gap} and \ref{rapunzel comb four}. These examples also show that the second part of Lemma \ref{indicating cv} below cannot be improved to the existence of a refinement critical value.

\begin{lemma}
\label{indicating cv} Let $0<\delta <\varepsilon $ be given. If $\varphi_{\varepsilon \delta }:X_{\delta }\rightarrow X_{\varepsilon }$ is not injective, then there is a homotopy critical value of $X$ in the interval $[\delta ,\varepsilon )$. If $\varphi _{\varepsilon\delta}:X_{\delta}\rightarrow X_{\varepsilon }$ is not surjective, then there is an upper non-surjective critical value in $[\delta ,\varepsilon )$.
\end{lemma}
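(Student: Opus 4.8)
The plan is to prove each half by an infimum-of-scales argument, both resting on the following elementary but crucial observation, which I will isolate first: if a single $\varepsilon$-homotopy $H=\{\eta_0,\dots,\eta_k\}$ witnesses some relation (a nullhomotopy, or a refinement), then $H$ consists of finitely many finite chains, so the largest distance $m_H$ occurring between consecutive points of any $\eta_i$ is \emph{strictly} less than $\varepsilon$; consequently every $\eta_i$ is a $\lambda$-chain and every basic move of $H$ remains valid whenever $\lambda>m_H$, so $H$ is simultaneously a $\lambda$-homotopy for \emph{every} $\lambda>m_H$. In other words, nullhomotopies and refinements, once available at a scale $\mu$, persist at all scales slightly below $\mu$, and can never first materialize exactly at the top endpoint of an interval. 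The complementary monotonicity is trivial: enlarging the scale only adds more admissible basic moves.

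\textbf{First statement (non-injectivity).} Since $\varphi_{\varepsilon\delta}$ is injective iff $\Phi_{\varepsilon\delta}\colon\pi_\delta(X)\to\pi_\varepsilon(X)$ is, non-injectivity yields a $\delta$-loop $\gamma$ at $\ast$ that is not $\delta$-null but is $\varepsilon$-null. Let $A=\{\lambda>0:\gamma\text{ is a }\lambda\text{-loop and is }\lambda\text{-null}\}$. By monotonicity $A$ is upward closed; it contains $\varepsilon$; and $\gamma$ being $\lambda$-null for some $\lambda\le\delta$ would make it $\delta$-null, so $A$ misses $(0,\delta]$. Hence $s:=\inf A$ lies in $[\delta,\varepsilon]$. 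Applying the persistence observation to an $\varepsilon$-nullhomotopy of $\gamma$ gives $\lambda$-nullhomotopies for $\lambda$ slightly below $\varepsilon$, so $s<\varepsilon$; applying it to a hypothetical $s$-nullhomotopy of $\gamma$ would give $\lambda$-nullhomotopies for some $\lambda<s=\inf A$, a contradiction, so $s\notin A$, i.e.\ $\gamma$ is not $s$-null. But by upward-closedness $\gamma$ is $\lambda$-null for every $\lambda>s$, so $\gamma$ is an essential $s$-loop (Definition \ref{homotopy cv def}) and $s\in H(X)\cap[\delta,\varepsilon)$.

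\textbf{Second statement (non-surjectivity).} Lemma \ref{characterize surjective} produces a two-point $\varepsilon$-chain $c=\{x,y\}$ not $\varepsilon$-homotopic to any $\delta$-chain; since otherwise $c$ itself would already be a $\delta$-chain, $e:=d(x,y)\in[\delta,\varepsilon)$. For any $\mu\in(e,\varepsilon]$, $c$ is a two-point $\mu$-chain, and a $\mu$-homotopy of $c$ onto a $\delta$-chain would be such an $\varepsilon$-homotopy; so by Lemma \ref{characterize surjective}, $\varphi_{\mu\delta}$ is not surjective for every $\mu\in(e,\varepsilon]$. Put $N=\{\mu\in[\delta,\varepsilon]:\varphi_{\mu\delta}\text{ is not surjective}\}$ and $s=\inf N$; then $\delta\le s\le e<\varepsilon$. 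I would then show $\varphi_{s\delta}$ \emph{is} surjective: given $[\alpha]_s$ with $\alpha$ having largest consecutive distance $m<s$, pick $\mu$ with $\max(m,\delta)<\mu<s$ (possible since $s>\delta$; trivial if $s=\delta$, where $\varphi_{s\delta}$ is the identity), so $\mu\notin N$, $\varphi_{\mu\delta}$ is surjective, and $\alpha$ is $\mu$-homotopic — hence $s$-homotopic — to a $\delta$-chain. Thus $s\notin N$, so $N\subseteq(s,\infty)$ with $\inf N=s$, giving a strictly decreasing sequence $\mu_n\searrow s$ in $N$. Factoring $\varphi_{\mu_n\delta}=\varphi_{\mu_n s}\circ\varphi_{s\delta}$ with $\varphi_{s\delta}$ surjective forces each $\varphi_{\mu_n s}\colon X_s\to X_{\mu_n}$ to be non-surjective, so $s$ is an upper non-surjective critical value in $[\delta,\varepsilon)$.

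\textbf{Main obstacle.} The only genuinely delicate points are that the ``good'' behavior holds exactly at the infimum scale $s$ — that $\gamma$ is not $s$-null in the first part, and that $\varphi_{s\delta}$ is surjective in the second — together with the strict inequality $s<\varepsilon$. All three rest on the finitely-many-distances persistence observation: no essential loop, and no unrefinable two-point chain, can exist only at the endpoint $\varepsilon$, and the infimum $s$ is never itself a ``bad'' scale, leaving room for the strictly monotone witnessing sequence. Everything else is routine bookkeeping with upward-closed subsets of $\mathbb{R}_+$, the composition relation for the bonding maps, and the characterizations already recorded in Lemma \ref{characterize surjective} and Definition \ref{homotopy cv def}.
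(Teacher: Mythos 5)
Your argument is correct. For the non-injectivity statement you do essentially what the paper does: extract a $\delta$-nontrivial, $\varepsilon$-null loop, take the infimum of the scales at which it is null, and use the finitely-many-distances persistence observation to see that this infimum lies in $[\delta,\varepsilon)$, is not itself a scale at which the loop is null, and hence is a homotopy critical value. For the non-surjectivity statement your route genuinely differs from the paper's. The paper fixes the single two-point chain $\gamma=\{x,y\}$ that cannot be $\varepsilon$-refined to a $\delta$-chain, sets $\varepsilon^{\ast}=\inf\{\tau\in(\delta,\varepsilon):\gamma\text{ can be }\varepsilon\text{-refined to a }\tau\text{-chain}\}$, shows $\gamma$ cannot be $\varepsilon$-refined to an $\varepsilon^{\ast}$-chain, and then argues by contradiction: if $\varepsilon^{\ast}$ were not upper non-surjective, surjectivity of the maps $X_{\varepsilon^{\ast}}\rightarrow X_{\varepsilon^{\ast}+t}$ for small $t$ would allow a refinement of $\gamma$ to an $(\varepsilon^{\ast}+t)$-chain to be further refined to an $\varepsilon^{\ast}$-chain, contradicting the choice of $\varepsilon^{\ast}$. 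You instead take the infimum $s$ of the set $N$ of scales $\mu$ at which $\varphi_{\mu\delta}$ fails to be surjective, prove surjectivity of $\varphi_{s\delta}$ directly (any $s$-chain is a $\mu$-chain for some $\mu<s$ above its largest consecutive distance, and such $\mu\notin N$), and then read off the required strictly decreasing witnessing sequence from the factorization $\varphi_{\mu_{n}\delta}=\varphi_{\mu_{n}s}\circ\varphi_{s\delta}$. Both arguments rest on the same persistence principle, but yours produces the sequence in the definition of an upper non-surjective critical value constructively, trading the paper's single-chain refinement threshold for the global surjectivity threshold (so your $s$ need not coincide with the paper's $\varepsilon^{\ast}$, though both land in $[\delta,\varepsilon)$); your handling of the boundary case $s=\delta$, where $\varphi_{s\delta}$ is the identity, is also correct.
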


\begin{proof}
If $\varphi _{\varepsilon \delta }:X_{\delta }\rightarrow X_{\varepsilon }$ is not injective, then there is a non-trivial $\delta $-loop $\gamma $ that is $\varepsilon $-null. Since an $\varepsilon $-chain is also an $(\varepsilon -t)$-chain for sufficiently small $t$, and since an $\varepsilon $-homotopy is just a finite sequence of $\varepsilon $-chains, it follows that an $\varepsilon $-homotopy is also an $(\varepsilon -t)$-homotopy for sufficiently small $t>0$. Hence, $\gamma$ is also $(\varepsilon -t)$-null for small $t$. Let $\varepsilon^{\ast }=\inf \{\tau \in (\delta ,\varepsilon ):\gamma \text{ is }\tau \text{-null}\}$. Note that $\varepsilon ^{\ast }$ cannot be in this set, for if $\gamma $ were $\varepsilon ^{\ast }$-null, it would be $(\varepsilon ^{\ast}-t)$-null for small $t$, contradicting that $\varepsilon ^{\ast }$ is the infimum. Thus $\gamma $ is $\varepsilon ^{\ast }$-nontrivial but $(\varepsilon ^{\ast }+t)$-null for all $t>0$, making $\varepsilon^{\ast }$ a homotopy critical value.

If $\varphi _{\varepsilon \delta }:X_{\delta }\rightarrow X_{\varepsilon }$ is not surjective then there is a two-point $\varepsilon $-chain $\gamma=\{x,y\}$ that cannot be $\varepsilon $-refined to a $\delta $-chain. Let $l=d(x,y)<\varepsilon $ and $\varepsilon ^{\ast }$ be the infimum of the set $R$ of all $\tau \in (\delta ,\varepsilon )$ such that $\gamma $ can be $\varepsilon $-refined to a $\tau $-chain. Clearly we have $\delta \leq \varepsilon ^{\ast }\leq l<\varepsilon $. Moreover, $\gamma $ cannot be $\varepsilon $-refined to an $\varepsilon ^{\ast }$-chain, for such a chain would also be an $(\varepsilon ^{\ast }-t)$-chain for sufficiently small $t>0 $, contradicting $\varepsilon ^{\ast }=\inf R$. Suppose $\varepsilon^{\ast } $ were not an upper non-surjective critical value. Then there would be some interval $[\varepsilon ^{\ast },\varepsilon ^{\ast }+t^{\ast })$ such that $\varepsilon^* + t^* < \varepsilon$ and, for every $0<t<t^{\ast }$, every $(\varepsilon ^{\ast }+t)$-chain can be $(\varepsilon ^{\ast }+t)$-refined to an $\varepsilon ^{\ast }$-chain. But by definition of $\varepsilon^{\ast }$, we can $\varepsilon $-refine $\gamma $ to an $(\varepsilon ^{\ast }+t)$-chain for some $t<t^{\ast }$. Then we could $(\varepsilon ^{\ast }+t)$-refine that chain to an $\varepsilon ^{\ast }$-chain, yielding an $\varepsilon $-refinement of $\gamma $ to an $\varepsilon ^{\ast }$-chain, a contradiction.
\end{proof}

\begin{lemma}
\label{lower non inject} For a connected metric space $X$ the following hold:
\begin{enumerate}
\item[\emph{1)}] $\varepsilon $ is a lower non-injective critical value of $X$ if and only if it is the upper limit of a strictly increasing sequence of homotopy critical values;
\item[\emph{2)}] if $\varepsilon$ is an upper non-injective critical value, then it is either a homotopy critical value or the limit of a strictly decreasing sequence of homotopy critical values.
\end{enumerate}
\end{lemma}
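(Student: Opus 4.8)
The engine for both parts is Lemma~\ref{indicating cv}: whenever $\varphi_{\varepsilon\delta}:X_{\delta}\to X_{\varepsilon}$ fails to be injective, there is a homotopy critical value of $X$ lying in the half-open interval $[\delta,\varepsilon)$. The plan is to combine this with a squeeze argument and the routine extraction of a strictly monotone subsequence, together with the fact (recalled in Section~2 and already used in the proof of Lemma~\ref{indicating cv}) that $\varphi_{\varepsilon\delta}$ is non-injective precisely when some $\delta$-loop that is not $\delta$-null is $\varepsilon$-null.

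For the forward direction of (1), suppose $\varepsilon$ is lower non-injective, witnessed by a strictly increasing sequence $\varepsilon_{n}\nearrow\varepsilon$ with each $\varphi_{\varepsilon\varepsilon_{n}}:X_{\varepsilon_{n}}\to X_{\varepsilon}$ non-injective. Applying Lemma~\ref{indicating cv} with $\delta=\varepsilon_{n}$ produces a homotopy critical value $h_{n}\in[\varepsilon_{n},\varepsilon)$. Since $\varepsilon_{n}\le h_{n}<\varepsilon$ and $\varepsilon_{n}\to\varepsilon$, the squeeze theorem gives $h_{n}\to\varepsilon$ with every $h_{n}<\varepsilon$; since a sequence lying strictly below its limit always has a strictly increasing subsequence converging to that limit, we obtain a strictly increasing sequence of homotopy critical values whose limit is $\varepsilon$, i.e. $\varepsilon$ is their upper limit. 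For the converse, suppose $h_{n}$ is a strictly increasing sequence of homotopy critical values with $h_{n}\to\varepsilon$, so that $h_{n}<\varepsilon$ for all $n$. For each $n$ choose an essential $h_{n}$-loop $\gamma_{n}$: it is not $h_{n}$-null, yet it is $\delta$-null for every $\delta>h_{n}$, and in particular it is an $\varepsilon$-chain that is $\varepsilon$-null. Thus $[\gamma_{n}]_{h_{n}}$ is a nontrivial element of $\pi_{h_{n}}(X)$ lying in the kernel of $\Phi_{\varepsilon h_{n}}$, so $\varphi_{\varepsilon h_{n}}:X_{h_{n}}\to X_{\varepsilon}$ is non-injective; as $h_{n}\nearrow\varepsilon$, this is exactly the assertion that $\varepsilon$ is a lower non-injective critical value.

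For (2), suppose $\varepsilon$ is upper non-injective, witnessed by $\varepsilon_{n}\searrow\varepsilon$ with each $\varphi_{\varepsilon_{n}\varepsilon}:X_{\varepsilon}\to X_{\varepsilon_{n}}$ non-injective. Applying Lemma~\ref{indicating cv} with the role of ``$\delta$'' played by the fixed value $\varepsilon$ and that of ``$\varepsilon$'' by $\varepsilon_{n}$ yields a homotopy critical value $h_{n}\in[\varepsilon,\varepsilon_{n})$, and the squeeze again gives $h_{n}\to\varepsilon$. If $h_{n}=\varepsilon$ for some $n$, then $\varepsilon\in H(X)$ and we are in the first alternative. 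Otherwise $h_{n}>\varepsilon$ for every $n$; since a sequence lying strictly above its limit has a strictly decreasing subsequence converging to that limit, we obtain a strictly decreasing sequence of homotopy critical values converging to $\varepsilon$, which is the second alternative.

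There is no serious obstacle here; the only point requiring a moment of care is the converse of (1), where one must invoke the exact characterization of non-injectivity of a bonding map in terms of $\varepsilon$-null, non-$\delta$-null $\delta$-loops, rather than merely the existence of a nearby homotopy critical value produced by Lemma~\ref{indicating cv}. Everything else is bookkeeping with convergent sequences.
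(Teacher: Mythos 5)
Your proposal is correct and follows essentially the same route as the paper: both directions hinge on Lemma~\ref{indicating cv} plus extraction of a strictly monotone subsequence, and part (2) is handled by the same dichotomy. The only cosmetic difference is in the converse of (1), where you exhibit a nontrivial element of $\ker\Phi_{\varepsilon h_{n}}$ directly from an essential $h_{n}$-loop, whereas the paper factors $\varphi_{\varepsilon\varepsilon_{n}}$ through an intermediate cover $X_{\varepsilon_{n}+t}$ — the underlying observation is identical.
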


\begin{proof}
If $\varepsilon $ is a lower non-injective critical value of $X$ then there exists a sequence $\varepsilon _{n}\nearrow \varepsilon $ such that each map $X_{\varepsilon_{n}}\rightarrow X_{\varepsilon }$ is non-injective. By Lemma \ref{indicating cv}, for each $n$ there is a homotopy critical value $\varepsilon _{n}^{\ast }$ in the interval $[\varepsilon _{n},\varepsilon )$. From the sequence $\{\varepsilon _{n}^{\ast }\}$ we may choose any strictly increasing subsequence.

Conversely, suppose there is a sequence of homotopy critical values, $\{\varepsilon _{n}\}$, such that $\varepsilon _{n}\nearrow \varepsilon$. Then for each $n$, there is some $t>0$ such that $\varepsilon_{n}<\varepsilon _{n}+t<\varepsilon$ and the map $\varphi_{\varepsilon_{n}+t,\varepsilon _{n}}:X_{\varepsilon _{n}}\rightarrow X_{\varepsilon_{n}+t}$ is not injective. The map $\varphi _{\varepsilon\varepsilon_{n}}:X_{\varepsilon_{n}}\rightarrow X_{\varepsilon }$ is equal to the composition $\varphi_{\varepsilon ,\varepsilon _{n}+t}\circ \varphi_{\varepsilon_{n}+t,\varepsilon _{n}}$. Since $\varphi_{\varepsilon_{n}+t,\varepsilon_{n}}$ is not injective, neither is $\varphi_{\varepsilon \varepsilon _{n}}$, completing the proof of the first statement.

For the second statement suppose $\varepsilon $ is not a homotopy critical value and consider a sequence $\varepsilon _{n}\searrow \varepsilon $ such that each map $\varphi_{\varepsilon_{n}\varepsilon}:X_{\varepsilon}\rightarrow X_{\varepsilon _{n}}$ is non-injective. Then there is a homotopy critical value $\varepsilon_{n}^{\ast }$ in $[\varepsilon ,\varepsilon _{n})$, and since $\varepsilon_{n}^{\ast }\neq \varepsilon $, we may choose any strictly decreasing subsequence of $\{\varepsilon _{n}^{\ast }\}$.
\end{proof}

\begin{lemma}
\label{lower non surj} Let $X$ be a metric space and $\varepsilon >0$ a lower non-surjective critical value. Then there exists a sequence of upper non-surjective critical values converging up to $\varepsilon $. If in addition $X$ is compact, then $\varepsilon $ is also an upper non-injective critical value.
\end{lemma}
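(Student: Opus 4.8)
The plan is to treat the two assertions separately, the first being an immediate corollary of Lemma~\ref{indicating cv} and the second needing a compactness argument that manufactures a non-trivial $\varepsilon$-loop. For the first assertion: since $\varepsilon$ is lower non-surjective there is a sequence $\varepsilon_n\nearrow\varepsilon$ with $\varphi_{\varepsilon\varepsilon_n}\colon X_{\varepsilon_n}\to X_\varepsilon$ not surjective for every $n$, and applying the second part of Lemma~\ref{indicating cv} with $\delta=\varepsilon_n$ yields an upper non-surjective critical value $\tau_n\in[\varepsilon_n,\varepsilon)$. Since $\varepsilon_n\to\varepsilon$ we get $\tau_n\to\varepsilon$ with $\tau_n<\varepsilon$, and any strictly increasing subsequence of the $\tau_n$ is a sequence of upper non-surjective critical values converging up to $\varepsilon$.

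For the second assertion assume $X$ is compact. By Lemma~\ref{characterize surjective}, non-surjectivity of $\varphi_{\varepsilon\varepsilon_n}$ provides a two-point $\varepsilon$-chain $\{x_n,y_n\}$ that cannot be $\varepsilon$-refined to an $\varepsilon_n$-chain; in particular $\varepsilon_n\le d(x_n,y_n)<\varepsilon$ (otherwise $\{x_n,y_n\}$ is already an $\varepsilon_n$-chain), so $d(x_n,y_n)\to\varepsilon$. Passing to a subsequence, compactness lets us assume $x_n\to x$ and $y_n\to y$, whence $d(x,y)=\varepsilon$. For large indices $m,n$ I would work with the loop
\[
L_{m,n}=\{x_m,x_n,y_n,y_m,x_m\},
\]
which is an $\varepsilon$-loop because $x_n\to x$, $y_n\to y$, and $d(x_k,y_k)<\varepsilon$ for every $k$. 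Removing $x_n$, then $y_m$, then $y_n$ shows $L_{m,n}$ is $\delta$-null whenever $\delta>\varepsilon+d(x_m,x)+d(y,y_n)$, since the only step of size near $\varepsilon$ encountered is $d(x_m,y_n)\le\varepsilon+d(x_m,x)+d(y,y_n)$.

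The crux is the claim that for every $M$ there are $m,n\ge M$ with $L_{m,n}$ \emph{not} $\varepsilon$-null. Suppose not: there is $M_0$ with $L_{m,n}$ $\varepsilon$-null for all $m,n\ge M_0$, which (rearranging the nullhomotopy) says $\{x_n,y_n\}\sim_\varepsilon\{x_n,x_m,y_m,y_n\}$ for all such $m,n$. Choose $p\ge M_0$ so large that $d(x,x_p)<\varepsilon/2$ and $d(y,y_p)<\varepsilon/2$, and then $q>p$ so large that $\varepsilon_q>\max\{d(x_p,y_p),\,3\varepsilon/4\}$, $d(x_q,x)<\varepsilon/4$, and $d(y_q,y)<\varepsilon/4$ (possible since $\varepsilon_q\to\varepsilon$, $x_q\to x$, $y_q\to y$). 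Then every step of $\{x_q,x_p,y_p,y_q\}$ is less than $\varepsilon_q$, so this is an $\varepsilon_q$-chain; but it is $\varepsilon$-homotopic to $\{x_q,y_q\}$, contradicting that $\{x_q,y_q\}$ is not $\varepsilon$-refinable to an $\varepsilon_q$-chain. This establishes the crux.

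To finish, fix any $\delta>\varepsilon$ and choose $M$ so large that $d(x_k,x),\,d(y_k,y)<(\delta-\varepsilon)/2$ for $k\ge M$, and so that $L_{m,n}$ is an $\varepsilon$-loop for $m,n\ge M$. By the crux there are $m,n\ge M$ with $L_{m,n}$ not $\varepsilon$-null; for these, $L_{m,n}$ is an $\varepsilon$-loop that is $\delta$-null, and conjugating it by an $\varepsilon$-chain from the basepoint to $x_m$ gives a non-trivial element of $\pi_\varepsilon(X)$ in the kernel of $\Phi_{\delta\varepsilon}$. Hence $\varphi_{\delta\varepsilon}$ is not injective for every $\delta>\varepsilon$, so $\varepsilon$ is upper non-injective. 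The main obstacle is exactly the crux: one must order the two indices correctly ($p$ first, so its endpoints are within $\varepsilon/2$ of $x$ and $y$; then $q$, so that $\varepsilon_q$ exceeds the now-fixed ``bad'' distance $d(x_p,y_p)$), in order to make the segments $\{x_q,x_p\}$, $\{x_p,y_p\}$, $\{y_p,y_q\}$ all honest $\varepsilon_q$-steps — and it is precisely here that compactness (to get the limit pair $x,y$) and the monotonicity $\varepsilon_n\nearrow\varepsilon$ are used together.
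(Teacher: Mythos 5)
Your proof is correct, and your first part is exactly the paper's (apply Lemma \ref{indicating cv} to the sequence witnessing lower non-surjectivity and extract a strictly increasing subsequence). For the compactness assertion your overall strategy matches the paper's — use Lemma \ref{characterize surjective} to get two-point gap chains $\{x_n,y_n\}$ with $d(x_n,y_n)\to\varepsilon$, pass to limits $x,y$ with $d(x,y)=\varepsilon$, and build quadrilateral loops out of two gap chains and two short connecting segments that are null just above $\varepsilon$ but not at $\varepsilon$ — but the mechanism for nontriviality is genuinely different. The paper first re-extracts an interleaved sequence of gap chains, arranging for each $n$ a threshold $\tau$ strictly between the consecutive gap lengths $\varepsilon_{n-1}$ and $\varepsilon_n$ below which $\alpha_n$ cannot be refined; this makes \emph{every} consecutive-index loop $\gamma_n=\{x_n,x_{n-1},y_{n-1},y_n,x_n\}$ provably $\varepsilon$-nontrivial, with an explicit scale $\delta_n+\tfrac1n\searrow\varepsilon$ at which it dies. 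You instead keep the originally given chains (non-refinable to $\varepsilon_n$-chains) and argue by contradiction that among all sufficiently large pairs some $L_{m,n}$ is $\varepsilon$-nontrivial: if all were null, choosing $p$ first (so $x_p,y_p$ are close to $x,y$) and then $q$ with $\varepsilon_q>d(x_p,y_p)$ would exhibit $\{x_q,y_q\}\sim_\varepsilon\{x_q,x_p,y_p,y_q\}$, an $\varepsilon_q$-chain, contradicting non-refinability; your quantifier ordering of $p$ and $q$ plays exactly the role of the paper's interleaved thresholds. Your $\delta$-nullity bound also routes through the limit pair ($\delta>\varepsilon+d(x_m,x)+d(y,y_n)$) rather than through $\varepsilon_{n-1}+\max\{d(x_n,x_{n-1}),d(y_n,y_{n-1})\}$. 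The trade-off: the paper's construction pins down a specific nontrivial loop at every stage, while your argument is lighter on bookkeeping and only yields existence of some nontrivial loop at each scale $\delta>\varepsilon$ — which is all that non-injectivity of $\varphi_{\delta\varepsilon}$ for every $\delta>\varepsilon$ requires. Both are complete; just make explicit in a final write-up that the contradiction hypothesis is restricted to pairs $m,n$ large enough that $L_{m,n}$ is an $\varepsilon$-loop, and that non-injectivity of $\Phi_{\delta\varepsilon}$ (after conjugating to the basepoint) gives non-injectivity of $\varphi_{\delta\varepsilon}$, as noted in Section 2.
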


\begin{proof}
The proof of the first part is an application of Lemma \ref{indicating cv}, similar to part 1 of Lemma \ref{lower non inject}. Going further, by hypothesis, there is an $\varepsilon $-chain $\alpha_{1}=\{x_{1},y_{1}\}$ that cannot be $\varepsilon $-refined to an $\frac{\varepsilon }{2}$-chain. Let $\varepsilon_{1}=d(x_{1},y_{1})$, and note that $\varepsilon>\varepsilon _{1}\geq \frac{\varepsilon }{2}$. Now, let $\tau _{1}=\max\{\varepsilon _{1},\varepsilon -\frac{\varepsilon }{3}\}$ and $\tau_{1}^{\ast }=\frac{\tau_1 + \varepsilon}{2}$. Then let $\alpha_{2}=\{x_{2},y_{2}\}$ be an $\varepsilon$-chain that cannot be $\varepsilon$-refined to a $\tau_{1}^{\ast }$-chain. Let $\varepsilon_{2}=d(x_{2},y_{2})$, noting that $\varepsilon _{2}\geq \tau _{1}^{\ast }>\tau _{1}\geq \varepsilon _{1}$ and $\varepsilon >\varepsilon _{2}>\varepsilon -\frac{\varepsilon }{3}$. Continuing this process inductively, we obtain a sequence of $\varepsilon $-chains, $\alpha _{n}=\{x_{n},y_{n}\}$ with $\varepsilon_{n}:=d(x_{n},y_{n})$ such that 1) $\varepsilon -\frac{\varepsilon }{n+1} <\varepsilon _{n}<\varepsilon$ for each $n\geq 2$, and 2) for each $n\geq 1$, there is some $\tau $ strictly between $\varepsilon_{n}<\varepsilon _{n+1}$ such that $\alpha _{n+1}$ cannot be $\varepsilon $-refined to a $\tau $-chain.\ In particular, $\varepsilon_{n}\nearrow\varepsilon $.

If $X$ is compact, by choosing subsequences if necessary we may suppose that: $x_{n}\rightarrow x$ and $y_{n}\rightarrow y$ for some $x,y$ such that $d(x,y)=\varepsilon $, and $d(x_{n},x_{n-1})$, $d(y_{n},y_{n-1})<\frac{\varepsilon }{2}$ for all $n$.

For each $n$, let $\beta _{n} := \{x_{n},x_{n-1},y_{n-1},y_{n}\}$, and let $\gamma _{n}$ denote the loop $\beta_{n}\alpha_{n}^{-1}=\{x_{n},x_{n-1},y_{n-1},y_{n},x_{n}\}$. Note that $\beta_n$ is an $\varepsilon$-chain and $\gamma_n$ is an $\varepsilon$-loop. For any $\lambda>\varepsilon _{n-1}$, $\beta _{n}$ is a $\lambda $-chain, since $d(x_{n},x_{n-1})$, $d(y_{n},y_{n-1})<\frac{\varepsilon }{2} \leq \varepsilon_{n-1}$ and $d(x_{n-1},y_{n-1})=\varepsilon _{n-1}$. In addition, the chain $\alpha _{n}$ cannot be $\varepsilon$-homotopic to $\beta _{n}$; if this were true, then $\alpha _{n}$ would be $\varepsilon $-homotopic to a chain in which all distances are at most $\varepsilon_{n-1}$, contradicting the fact that there is some $\tau $ strictly between $\varepsilon _{n-1}$ and $\varepsilon _{n}$ such that $\alpha _{n}$ is not $\varepsilon $-homotopic to a $\tau $-chain. It follows that $\gamma _{n}$ is not $\varepsilon $-null. We next observe that that $\varepsilon_{n-1}+d(x_{n},x_{n-1})\geq \varepsilon $ and $\varepsilon_{n-1}+d(y_{n},y_{n-1})\geq \varepsilon $. In fact, if the first inequality did not hold, we would have the following $\varepsilon $-nullhomotopy between $\gamma_n$ and $\{x_n\}$:
\begin{eqnarray}
\gamma _{n}=\{x_{n},x_{n-1},y_{n-1},y_{n},x_{n}\} &\sim& \{x_{n},y_{n-1},y_{n},x_{n}\}  \label{nullhomotopy} \\
&\sim& \{x_{n},y_{n},x_{n}\}  \notag \\
&\sim& \{x_{n},x_{n}\},  \notag
\end{eqnarray}
\noindent where the first step is allowed because 
\begin{equation*}
d(x_{n},y_{n-1})\leq d(x_{n},x_{n-1})+d(x_{n-1},y_{n-1})=d(x_{n},x_{n-1})+\varepsilon_{n-1}< \varepsilon.
\end{equation*}
\noindent A similar conclusion follows if the other inequality does not hold. Finally, define 
\begin{equation*}
\delta_{n}:=\max\{\varepsilon_{n-1}+d(x_{n},x_{n-1}),\varepsilon_{n-1}+d(y_{n},y_{n-1})\} \geq \varepsilon.
\end{equation*}
\noindent Then $\gamma _{n}$ is $(\delta _{n}+\frac{1}{n})$-null, via the same sequence of steps in homotopy \ref{nullhomotopy}. This means that the map $X_{\varepsilon }\rightarrow X_{\delta _{n}+\frac{1}{n}}$ is not injective. Since $\delta _{n}+\frac{1}{n}\rightarrow \varepsilon $, it now follows that $\varepsilon $ is an upper non-injective critical value.
\end{proof}

\begin{lemma}
\label{refex}Suppose that $X$ is a compact metric space, $\varepsilon $ is an upper non-surjective critical value, and there exists $\delta>\varepsilon$ such that whenever $\varepsilon <\delta _{1}<\delta_{2}<\delta $, the map $\varphi _{\delta _{2}\delta _{1}}$ is injective. Then $\varepsilon $ is a refinement critical value.
\end{lemma}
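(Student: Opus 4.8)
The plan is to build an essential $\varepsilon$-gap as a limit of two-point gaps supplied by the upper-non-surjectivity, and then to confirm it is essential by pitting the scale $\varepsilon$ against a fixed scale $\delta'\in(\varepsilon,\delta)$ on which all bonding maps are injective.

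First I would extract the candidate gap. Since $\varepsilon$ is upper non-surjective, fix $\varepsilon_n\searrow\varepsilon$ with $\varphi_{\varepsilon_n\varepsilon}$ not surjective; by Lemma~\ref{characterize surjective} choose, for each $n$, a two-point $\varepsilon_n$-chain $\{x_n,y_n\}$ that is not $\varepsilon_n$-homotopic to any $\varepsilon$-chain. A pair at distance $<\varepsilon$ is already an $\varepsilon$-chain, so necessarily $\varepsilon\le d(x_n,y_n)<\varepsilon_n$; then compactness of $X\times X$ together with the squeeze $\varepsilon\le d(x_n,y_n)<\varepsilon_n\to\varepsilon$ gives, along a subsequence, $x_n\to x$ and $y_n\to y$ with $d(x,y)=\varepsilon$. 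I would then show $\{x,y\}$ is an essential $\varepsilon$-gap; since $d(x,y)=\varepsilon>0$, this gives $\varepsilon\in R(X)$.

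Next I would argue by contradiction, assuming $\{x,y\}$ is \emph{not} essential. Because a $\delta_1'$-homotopy is also a $\delta_2'$-homotopy whenever $\delta_1'\le\delta_2'$, the set of $\delta'>\varepsilon$ for which $\{x,y\}$ is $\delta'$-homotopic to some $\varepsilon$-chain is upward closed, hence (by the failure of essentiality) equals $(\varepsilon,\infty)$. Fix $\delta'\in(\varepsilon,\delta)$ and an $\varepsilon$-chain $\sigma=\{x=s_0,\dots,s_m=y\}$ with $\{x,y\}\sim_{\delta'}\sigma$. For $n$ large put $\sigma_n:=\{x_n,s_0\}\,\sigma\,\{s_m,y_n\}$, an $\varepsilon$-chain from $x_n$ to $y_n$ (using $d(x_n,x),d(y_n,y)<\varepsilon$), and $\ell_n:=\{x_n,y_n\}\cdot\sigma_n^{-1}$, a loop at $x_n$ which is both an $\varepsilon_n$-loop and, since $\varepsilon_n<\delta'$, a $\delta'$-loop. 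On one hand $\ell_n$ is \emph{not} $\varepsilon_n$-null: otherwise $\{x_n,y_n\}\sim_{\varepsilon_n}\sigma_n$, contradicting that $\{x_n,y_n\}$ cannot be $\varepsilon_n$-refined to the $\varepsilon$-chain $\sigma_n$. On the other hand $\ell_n$ \emph{is} $\delta'$-null: from $\ell_n=\{x_n,y_n,y,s_{m-1},\dots,s_1,x,x_n\}$ one deletes $y_n$ (legal at scale $\delta'$ because $d(x_n,y)\le d(x_n,y_n)+d(y_n,y)\to\varepsilon<\delta'$), reaching $\{x_n,y\}\,\sigma^{-1}\,\{x,x_n\}$; replacing $\sigma^{-1}$ by $\{y,x\}$ via $\sigma^{-1}\sim_{\delta'}\{y,x\}$ and compatibility of $\sim_{\delta'}$ with concatenation gives $\{x_n,y,x,x_n\}$, which collapses to $\{x_n\}$ by deleting $y$ and then $x$. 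Consequently the bonding map $X_{\varepsilon_n}\to X_{\delta'}$ fails to be injective: after precomposing $\{x_n,y_n\}$ and $\sigma_n$ (hence $\ell_n$) with an $\varepsilon$-chain from $\ast$ to $x_n$, furnished by chain connectedness, $[\ell_n]$ is a nontrivial element of $\pi_{\varepsilon_n}(X)$ killed by $\Phi_{\delta'\varepsilon_n}$. But $\varepsilon<\varepsilon_n<\delta'<\delta$ for $n$ large, so the standing hypothesis forces $\varphi_{\delta'\varepsilon_n}$ to be injective — a contradiction. Hence $\{x,y\}$ is an essential $\varepsilon$-gap and $\varepsilon$ is a refinement critical value.

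The hard part is that the naive approach — carrying the $\delta'$-homotopy $\{x,y\}\sim_{\delta'}\sigma$ directly over to $\{x_n,y_n\}$ at scale $\varepsilon_n$ — does not work, since it would require $d(x_n,x),d(y_n,y)<\varepsilon_n-\varepsilon$, and nothing controls the rate at which the gaps converge against $\varepsilon_n\searrow\varepsilon$. The device that saves the proof is to perform \emph{all} the collapsing of $\ell_n$ at the fixed scale $\delta'$, which is bounded away from $\varepsilon$ so that the perturbation estimates become trivial, and only afterwards to transport the information back down to scale $\varepsilon_n$ via injectivity of the bonding maps on $(\varepsilon,\delta)$ — which is exactly where the hypothesis on $\delta$ enters. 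A minor bookkeeping point is the basepoint change needed to read $\ell_n$ as an element of $\pi_{\varepsilon_n}(X)$, which is handled by the basepoint-change isometries recalled in Section~2.
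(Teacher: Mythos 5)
Your proposal is correct and follows essentially the same route as the paper: extract non-refinable two-point $\varepsilon_n$-chains from upper non-surjectivity, pass by compactness to a limit pair $\{x,y\}$ with $d(x,y)=\varepsilon$, and play a putative refinement of $\{x,y\}$, transported to $\{x_n,y_n\}$ at a fixed scale in $(\varepsilon,\delta)$, against the injectivity hypothesis. The only cosmetic difference is that the paper applies injectivity up front (via $\varphi_{\varepsilon_1\varepsilon}=\varphi_{\varepsilon_1\varepsilon_n}\circ\varphi_{\varepsilon_n\varepsilon}$) to show the $\{x_n,y_n\}$ are not even $\varepsilon_1$-refinable to $\varepsilon$-chains, whereas you apply it at the end by exhibiting the loop $\ell_n$ ($\varepsilon_n$-nontrivial, $\delta'$-null) as a witness to non-injectivity of $\varphi_{\delta'\varepsilon_n}$ --- contrapositive phrasings of the same step.
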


\begin{proof}
By assumption, there is a sequence $\varepsilon _{n}\searrow \varepsilon $, with $\varepsilon _{n}<\delta $ for all $n$, such that each map $X_{\varepsilon }\rightarrow X_{\varepsilon _{n}}$ is not surjective. That is, for each $n$ there is an $\varepsilon _{n}$-chain $\gamma_{n}=\{x_{n},y_{n}\}$ that cannot be $\varepsilon _{n}$-refined to an $\varepsilon $-chain. We claim that for any $n \geq 1$ $\gamma _{n}$ cannot be $\varepsilon _{1}$ -refined to an $\varepsilon $-chain either. If it could be then, equivalently, $[\gamma _{n}]_{\varepsilon _{1}}$ would lie in the image of $\varphi_{\varepsilon _{1}\varepsilon }$. But by assumpion, $\varphi _{\varepsilon_{1}\varepsilon _{n}}$ is an injection, and since $\varphi _{\varepsilon_{1}\varepsilon }=\varphi _{\varepsilon_{1}\varepsilon_{n}}\circ \varphi_{\varepsilon _{n}\varepsilon }$, that would place $[\gamma_{n}]_{\varepsilon _{n}}$ in the image of $\varphi_{\varepsilon_{n}\varepsilon }$, a contradiction.

Since $X$ is compact, by choosing subsequences if necessary we may suppose $x_{n}\rightarrow x$ and $y_{n}\rightarrow y$ for some $x,y$ with $d(x,y)=\varepsilon $. We finish by proving that $\gamma:=\{x,y\}$ is an essential gap. Suppose, to the contrary, that we can find values $\delta$ strictly greater than but arbitrarily close to $\varepsilon $ such that $\gamma$ can be $\delta $-refined to an $\varepsilon $-chain. We may choose $\delta<\varepsilon _{1}$ and $n$ large enough that $d(x_{n},y_{n})+d(x_{n},x)+d(y_{n},y)<\delta $ and $d(x_n,x)$, $d(y_n,y) < \varepsilon$. Noting that $d(x,y_{n})\leq d(x,x_{n})+d(x_{n},y_{n})<\delta $, there is a $\delta $-homotopy 
\begin{equation*}
\gamma _{n}=\{x_{n},y_{n}\}\rightarrow \{x_{n},x,y_{n}\}\rightarrow \{x_{n},x,y,y_{n}\}.
\end{equation*}
\noindent Combining this with a $\delta $-refinement of $\{x,y\}$ to an $\varepsilon $-chain, we obtain a $\delta $-refinement of $\gamma _{n}$ to an $\varepsilon $-chain. Since $\delta <\varepsilon _{1}$, this $\delta$-refinement would also be an $\varepsilon _{1}$-refinement, which we showed previously does not exist.
\end{proof}

\vspace{.1 in}

\begin{proof}[Proof of Theorem \protect\ref{characterize isolated cv}]
Lemmas \ref{lower non inject} and \ref{lower non surj} together imply that $\varepsilon $ must be upper non-injective or upper non-surjective. If the former holds, then Lemma \ref{lower non inject} and the fact that this critical value is isolated imply that $\varepsilon $ is a homotopy critical value. If $\varepsilon $ is upper non-surjective then since $\varepsilon $ is an isolated critical value, the hypotheses of Lemma \ref{refex} are satisfied and $\varepsilon $ is a refinement critical value.
\end{proof}

\vspace{0.1in}Compactness is only required for the second part of the proof of Theorem \ref{characterize isolated cv}, but the theorem is false without it:

\begin{example}
\label{noncompact non-gap} \emph{Let $X$ be the union of the graphs of $f(x)=1+e^{x}$ and $g(x)=-1-e^{x}$, for $x\leq 0$, and the vertical segment connecting the right endpoints of each graph. Give $X$ the subspace metric inherited from $\mathbb{R}^{2}$. Pictured as a subset of $\mathbb{R}^{2}$, as $x\rightarrow -\infty $ the tails of this space asymptotically approach a distance of $2$ from each other. In fact, $2$ is an upper non-surjective critical value. To see this, take any pair of vertically aligned points, $\{(x,-1-e^{x}),(x,1+e^{x})\}$, for any $x << 0$. The distance between these points is $2+2e^{x}$. For all $\varepsilon $ greater than but sufficiently close to this value, this chain can be $\varepsilon $-refined to a $2+\tau $ chain for any $\tau >0$, but it cannot be refined to a $2$-chain. (If $\varepsilon $ is large enough we can simply refine it around the other end.\emph{)} However, there is no essential $2$-gap in this space. $\blacksquare$}
\end{example}

\begin{proof}[Proof of Theorem \protect\ref{spec equals closure hrcv}]
The containment $\overline{H(X)}\cup R(X)\subset Cr(X)$ is clear since $Cr(X) $ is closed in $\mathbb{R}_+$. Conversely, let $\varepsilon >0$ be a critical value of $X$. Lemma \ref{lower non inject} immediately handles the cases when $\varepsilon$ is upper or lower non-injective. If $\varepsilon $ is lower non-surjective then we may use Lemma \ref{lower non surj} followed by Lemma \ref{lower non inject}. If $\varepsilon $ is upper non-surjective then by Lemma \ref{refex} we need only consider the following case: there exist, for each $n\geq 1$, $\tau _{n}$ and $\delta _{n}$ such that $\varepsilon <\delta _{n}<\tau_{n}<\varepsilon +\frac{1}{n}$ and the map $\varphi _{\tau _{n}\delta_{n}}:X_{\delta_{n}}\rightarrow X_{\tau _{n}}$ is non-injective. But then Lemma \ref{indicating cv} implies there is a homotopy critical value in each interval $[\delta_{n},\tau _{n})$, and the proof is finished.
\end{proof}

\vspace{.1 in} The next few results further examine the utility and topological significance of refinement critical values.

\begin{proposition}
\label{local conn thm} If $\{x,y\}$ is an essential $\varepsilon$-gap in a metric space $X$, then the balls $B(x,\delta)$ and $B(y,\delta)$ are disconnected for all $\delta$ greater than but sufficiently close to $\varepsilon$.
\end{proposition}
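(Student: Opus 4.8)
The plan is to argue by contradiction. Since $\{x,y\}$ is an essential $\varepsilon$-gap we have $d(x,y)=\varepsilon$ and there is some $\delta_{0}>\varepsilon$ such that for every $\delta\in(\varepsilon,\delta_{0})$ the chain $\{x,y\}$ is not $\delta$-homotopic to any $\varepsilon$-chain. I would fix such a $\delta$, assume that $B(x,\delta)$ is connected, and derive from this a $\delta$-homotopy from $\{x,y\}$ to an $\varepsilon$-chain, contradicting the essential-gap hypothesis for that $\delta$.

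First I would invoke the basic fact (recorded in Section 2) that a connected metric space is chain connected. Because $d(x,y)=\varepsilon<\delta$, the point $y$ lies in the connected — hence chain connected — subspace $B(x,\delta)$, so there is an $\varepsilon$-chain $\alpha=\{x=z_{0},z_{1},\dots,z_{n}=y\}$ lying entirely inside $B(x,\delta)$; in particular $d(x,z_{i})<\delta$ for every $i$, and $d(z_{i-1},z_{i})<\varepsilon$ for consecutive points.

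Next I would construct the homotopy by inserting the interior points of $\alpha$ one at a time, each immediately after the basepoint $x$, in the order $z_{n-1},z_{n-2},\dots,z_{1}$. The move that introduces $z_{n-k}$ turns $\{x,z_{n-k+1},\dots,z_{n-1},y\}$ into $\{x,z_{n-k},z_{n-k+1},\dots,z_{n-1},y\}$; this is a legitimate basic move preserving the $\delta$-chain condition because $d(x,z_{n-k})<\delta$ (the inserted point lies in $B(x,\delta)$) and $d(z_{n-k},z_{n-k+1})<\varepsilon<\delta$ (consecutive points of $\alpha$, with $z_{n}=y$ at the last stage). After all $n-1$ insertions we reach $\alpha$ itself, so $\{x,y\}$ is $\delta$-homotopic to the $\varepsilon$-chain $\alpha$ — a contradiction. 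Hence $B(x,\delta)$ is disconnected, and since $\delta$ was an arbitrary element of $(\varepsilon,\delta_{0})$, this holds for all $\delta$ greater than but sufficiently close to $\varepsilon$.

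For $B(y,\delta)$ I would observe that $\{x,y\}$ is $\delta$-homotopic to some $\varepsilon$-chain if and only if $\{y,x\}$ is (apply reversal, using that the reverse of an $\varepsilon$-chain is an $\varepsilon$-chain), so $\{y,x\}$ is an essential $\varepsilon$-gap in exactly the same sense, and the argument above applies verbatim with the roles of $x$ and $y$ interchanged. The only delicate point is the bookkeeping in the insertion step — checking that every intermediate chain really is a $\delta$-chain — and, relatedly, the choice to insert each new point adjacent to the \emph{center} of the ball rather than to the far endpoint, which is precisely what makes the estimate $d(\text{center},z_{i})<\delta$ available; the reversal remark is what allows the same choice to work for the second ball.
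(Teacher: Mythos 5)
Your proposal is correct and is essentially the paper's own argument: assume the ball is ($\varepsilon$-/chain) connected, take an $\varepsilon$-chain from $x$ to $y$ inside $B(x,\delta)$, and use the fact that every point of it is within $\delta$ of the center to produce a $\delta$-homotopy between $\{x,y\}$ and that chain, contradicting the essential-gap condition; your insertion of points adjacent to $x$ is just the paper's successive-removal homotopy run in reverse, and the reversal remark for $B(y,\delta)$ matches the paper's ``parallel argument.''
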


\begin{proof}
Let $\alpha := \{x,y\}$ be an essential $\varepsilon$-gap in $X$. There is a $\tau > \varepsilon$ such that for all $\delta \in (\varepsilon,\tau)$, $\alpha$ is not $\delta$-homotopic to an $\varepsilon$-chain. We claim that $B(x,\delta)$ is not connected for all such $\delta$; a parallel argument holds for $B(y,\delta)$. In fact, $B(x,\delta)$ is not even $\varepsilon$-connected. If $B(x,\delta)$ were $\varepsilon$-connected, there would be an $\varepsilon$-chain $\beta = \{x=x_0,x_1,\dots,x_n=y\}$ lying in $B(x,\delta) $. Then we could construct a $\delta$-homotopy between $\beta$ and $\alpha$ by just successively removing $x_1$, $x_2,\dots, x_{n-1}$. But this contradicts that $\{x,y\}$ is an essential $\varepsilon$-gap.
\end{proof}

\vspace{.1 in} \noindent The topologist's sine curve with its Euclidean metric has no refinement critical values but has a continuum of points at which small balls are not connected. Thus, the converse of the previous proposition is not true.

\begin{definition}
\label{eintrinsic metric def} Let $(X,d)$ be a connected metric space with $\varepsilon > 0$. The $\varepsilon$-intrinsic metric determined or induced by $d$ is the metric $D_{\varepsilon}$ defined by 
\begin{equation*}
D_{\varepsilon}(x,y) = \inf \{ L(\alpha) : \alpha \ \text{is an $\varepsilon$-chain in $(X,d)$ from $x$ to $y$}\}.
\end{equation*}
\end{definition}

The fact that $D_{\varepsilon}$ is a metric follows from the same type of argument that shows $d_{\varepsilon}$ is a metric on the $\varepsilon$-cover, and it follows immediately from the triangle inequality that $d \leq D_{\varepsilon}$. This inequality and the fact that $\{x,y\}$ is the shortest $\varepsilon$-chain between $x$ and $y$ when $d(x,y) < \varepsilon$ further imply that $D_{\varepsilon}(x,y) < \varepsilon$ if and only if $d(x,y) < \varepsilon$, in which case the two metrics agree. This also shows that not only are $(X,d)$ and $(X,D_{\varepsilon})$ locally isometric but the topologies induced by $d$ and $D_{\varepsilon}$ are equivalent. Finally, for fixed $x,y \in X$, it is easy to see that $D_{\varepsilon}(x,y) \leq D_{\delta}(x,y)$ for $\delta < \varepsilon$, since any $\delta$-chain from $x$ to $y$ is also an $\varepsilon$-chain. Thus, the function $(\varepsilon,x,y) \mapsto D_{\varepsilon}(x,y)$ is monotone decreasing in $\varepsilon$, or increasing as $\varepsilon \rightarrow 0$.

The $\varepsilon$-intrinsic metric induced by a given metric can be thought of as a discretized or coarse analog of an induced length metric. If $(X,d)$ is already a length or geodesic space, then $D_{\varepsilon}$ is equal to $d$ for every $\varepsilon$. Thus, one can roughly think of the difference between a given metric $d$ and the induced metric $D_{\varepsilon}$ as a measure - at a particular metric scale - of how far $d$ is from being intrinsic in some sense. It is also interesting to note that the $\varepsilon $-cover, $X_{\varepsilon}$, with its natural metric $d_{\varepsilon}$, is \textit{always} $\varepsilon$-intrinsic, whether $(X,d)$ is geodesic or not.

One of the results of Theorem \ref{eintrinsic metric} is that, for each $\varepsilon >0$, one can replace the given metric $d$ on a compact, connected metric space $X$ with a topologically equivalent metric that eliminates all refinement critical values greater than $\varepsilon $. In particular, we have the following immediate corollary.

\begin{corollary}
\label{eliminate rcv} If $(X,d)$ is a compact, connected metric space such that $\inf R(X) > 0$, then there is a metric $\bar{d}$ on $X$ that is topologically equivalent to $d$ and is such that $(X,\bar{d})$ has no refinement critical values.
\end{corollary}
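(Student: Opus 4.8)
The plan is to invoke Theorem \ref{eintrinsic metric} with a cutoff chosen below $\inf R(X)$ and then verify by hand that the resulting metric has no essential gap at any scale. Set $r := \inf R(X) > 0$, fix any $\varepsilon$ with $0 < \varepsilon < r$, and let $\bar d := D_\varepsilon$ be the $\varepsilon$-intrinsic metric induced by $d$ (Definition \ref{eintrinsic metric def}). Part 2 of Theorem \ref{eintrinsic metric} — equivalently, the remarks following Definition \ref{eintrinsic metric def} — gives that $\bar d$ is bi-Lipschitz, hence topologically, equivalent to $d$, and part 3 gives that $\bar d$ has no refinement critical value exceeding $\varepsilon$. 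So everything reduces to showing $(X,\bar d)$ has no refinement critical value $\mu$ in $(0,\varepsilon]$, and I would treat $\mu < \varepsilon$ and $\mu = \varepsilon$ separately.

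For $\mu < \varepsilon$ the point is that refinement critical values below $\varepsilon$ cannot distinguish $d$ from $\bar d$. Using the basic property of the $\varepsilon$-intrinsic metric recorded after Definition \ref{eintrinsic metric def} (namely, $\bar d(a,b) < \varepsilon$ exactly when $d(a,b) < \varepsilon$, and then $\bar d$ agrees with $d$), one checks that for every $\delta < \varepsilon$ the notions of $\delta$-chain, $\delta$-homotopy, and length of a $\delta$-chain are literally the same in $(X,d)$ and $(X,\bar d)$. Hence a two-point chain $\{x,y\}$ with $\bar d(x,y) = \mu < \varepsilon$ is an essential $\mu$-gap of $(X,\bar d)$ precisely when it is one of $(X,d)$; since $\varepsilon < r = \inf R(X)$, there is no such gap in $(X,d)$, and therefore none in $(X,\bar d)$ below $\varepsilon$.

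The case $\mu = \varepsilon$ is the crux and uses that $\bar d$ is $\varepsilon$-intrinsic. Given $\{x,y\}$ with $\bar d(x,y) = \varepsilon$, I must rule it out as an essential $\varepsilon$-gap of $(X,\bar d)$, i.e.\ produce, for each $\delta > \varepsilon$, an $\varepsilon$-chain of $(X,\bar d)$ that is $\delta$-homotopic to $\{x,y\}$. Since $\bar d(x,y) = \varepsilon$ is the infimum of the $d$-lengths of $\varepsilon$-chains of $(X,d)$ from $x$ to $y$, I can pick such a chain $\alpha = \{x = p_0, p_1, \dots, p_m = y\}$ with $L(\alpha) < \delta$; as its consecutive $d$-distances are all below $\varepsilon$, $\alpha$ is also an $\varepsilon$-chain of $(X,\bar d)$. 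Now form a $\delta$-homotopy by inserting $p_1, \dots, p_{m-1}$ one at a time into $\{p_0, p_m\}$: when $p_k$ is inserted, the two newly created legs are $\bar d(p_{k-1}, p_k) = d(p_{k-1}, p_k) < \varepsilon$ and $\bar d(p_k, p_m) \le \sum_{i=k+1}^{m} \bar d(p_{i-1}, p_i) = \sum_{i=k+1}^{m} d(p_{i-1}, p_i) \le L(\alpha) < \delta$, both below $\delta$, so every intermediate chain is a $\delta$-chain with fixed endpoints $x, y$. This exhibits $\{x,y\}$ as $\delta$-homotopic to the $\varepsilon$-chain $\alpha$, so $\varepsilon \notin R(X,\bar d)$. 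Combining the three ranges, $R(X,\bar d) = \emptyset$.

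The main obstacle is precisely this last case: part 3 of Theorem \ref{eintrinsic metric} excludes only values \emph{strictly} above $\varepsilon$, so one must separately argue that $\varepsilon$ itself is not a refinement critical value of $D_\varepsilon$, and for that one needs the defining approximation property of the $\varepsilon$-intrinsic metric to manufacture the required refinements just above scale $\varepsilon$. Everything below $\varepsilon$ is routine, resting only on the fact that $d$ and $D_\varepsilon$ are indistinguishable at scales $< \varepsilon$; and the hypothesis $\inf R(X) > 0$ enters exactly to guarantee that a legitimate cutoff $\varepsilon$ lying below all of $R(X,d)$ exists.
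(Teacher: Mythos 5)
Your proof is correct and follows essentially the route the paper intends for this corollary: apply Theorem \ref{eintrinsic metric} with a cutoff $\varepsilon<\inf R(X)$ and observe that $d$ and $D_{\varepsilon}$ are indistinguishable at scales below $\varepsilon$, so no new refinement critical values can appear there. Your separate treatment of the borderline value $\mu=\varepsilon$ is also fine, and it is in fact the same chain-insertion/removal homotopy used in the paper's proof of part 3 of Theorem \ref{eintrinsic metric}, whose argument applies to every two-point $\delta$-chain with $\delta>\varepsilon$ and hence already covers pairs at $D_{\varepsilon}$-distance exactly $\varepsilon$.
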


\noindent This result also supports the idea put forth in the introduction that refinement critical values arise not as much as a result of the underlying topology of the given space as they do as a result of the particular metric one imposes on the space.

\begin{proof}[Proof of Theorem \protect\ref{eintrinsic metric}]
Part 1 has already been noted. Consider the metric space $(X,D_{\varepsilon}) $. We claim that for any $\delta > \varepsilon$, any two-point $\delta$-chain can be $\delta$-refined to an $\varepsilon$-chain, showing that there are no refinement critical values greater than $\varepsilon$. But this essentially follows from the definition of $D_{\varepsilon}$. If $D_{\varepsilon}(x,y) < \delta$, then there is - with respect to $d$ - an $\varepsilon$-chain $\alpha =\{x=x_0,x_1,\dots,x_n=y\}$ such that $L(\alpha) = \sum_{i=1}^n d(x_{i-1},x_i) < \delta$. Since the metrics agree on $\varepsilon$-balls, $\alpha$ is also an $\varepsilon$-chain in $(X,D_{\varepsilon})$. To see that $\alpha$ is $\delta$-homotopic to $\{x,y\}$ in $(X,D_{\varepsilon})$, we construct a $\delta$-homotopy as follows. Since $\{x_0,x_1,x_2\}$ is an $\varepsilon$-chain with respect to $d$, we have $D_{\varepsilon}(x_0,x_2) \leq d(x_0,x_1) + d(x_1,x_2) \leq \sum_{i=1}^n d(x_{i-1},x_i) < \delta$. Thus, we can remove $x_1$ from $\alpha$ via $\delta $-homotopy to obtain $\{x_0,x_2,\dots,x_n\}$. Similarly, $\{x_0,x_1,x_2,x_3\} $ is an $\varepsilon$-chain in $(X,d)$ from $x_0$ to $x_3$, and $D_{\varepsilon}(x_0,x_3) \leq \sum_{i=1}^3 d(x_{i-1},x_i) \leq \sum_{i=1}^n d(x_{i-1},x_i) < \delta$. So, we can then remove $x_2$ from the previous chain to obtain $\{x_0,x_3,\dots,x_n\}$. Continuing in the obvious way, we obtain a $\delta$-homotopy in $(X,D_{\varepsilon})$ from $\alpha$ to $\{x = x_0,x_n=y\}$.

Now, assume $(X,d)$ is compact, which implies that there is a natural number $M$ such that any two points in $(X,d)$ can be joined by an $\varepsilon$-chain having at most $M$ points. If $x$, $y \in X$ and $d(x,y) < \varepsilon$, then $D_{\varepsilon}(x,y) = d(x,y)$ and we trivially have $D_{\varepsilon}(x,y) \leq Md(x,y)$. Suppose $d(x,y) \geq \varepsilon$, and let $\alpha$ be an $\varepsilon$-chain in $(X,d)$ from $x$ to $y$ with $M$ or fewer points. It follows that $D_{\varepsilon}(x,y) \leq L(\alpha) \leq M\varepsilon \leq Md(x,y)$. Thus, $D_{\varepsilon} \leq Md$, and the two metrics are bi-Lipschitz equivalent.
\end{proof}

\vspace{.1 in} Since $D_{\varepsilon}(x,y)$ is monotone increasing as $\varepsilon$ decreases, it is natural to consider what happens when we let $\varepsilon$ go to $0$. If we set 
\begin{equation*}
D_0(x,y) = \sup_{\varepsilon} D_{\varepsilon}(x,y),
\end{equation*}
then it is easy to see that $D_0$ is also a metric on $X$ when $D_0(x,y)$ is finite for all $x$ and $y$. Furthermore, we obviously have $d(x,y)\leq D_{\varepsilon}(x,y) \leq D_0(x,y)$ for all $x,y \in X$ and $\varepsilon > 0$. In general, however, $D_0$ need not be finite even for a compact connected metric space. For instance, if we take the usual Koch snowflake with its subspace metric in $\mathbb{R}^2$, then $D_0(x,y)$ will be infinite for all pairs of points. Moreover, even if $D_0$ is finite and uniformly bounded, the resulting metric space $(X,D_0)$ need not be topologically equivalent to the original space $(X,d)$. If $X$ is the boundary of the unit square in $\mathbb{R}^2$ with the vertical segments from the top boundary to the bottom attached at the points $\left(\frac{1}{2^n},0\right)$, then $X$ with the subspace metric is compact and path connected, though not locally connected. In this case, $D_0(x,y)$ is finite for every $(x,y)$, but the resulting space $(X,D_0)$ is not compact. The issue that arises here is that the while $D_0$ is bounded, the relative distortion $\frac{D_0(x,y)}{d(x,y)}$ is unbounded.

\begin{proof}[Proof of Theorem \protect\ref{ulcc}]
The proof that $(X,D_{0})$ has no refinement critical values follows as in the proof of Theorem \ref{eintrinsic metric}. In fact, if $\varepsilon<\delta $ and $\{x,y\}$ is a $\delta $-chain with respect to $D_{0}$, then $D_{\varepsilon }(x,y)\leq D_{0}(x,y)<\delta $, and the preceding proof goes through without change.

For the second part, it suffices to prove that for every $x,y\in X$ there is a midpoint between $x$ and $y$ (cf. \cite{BBI} or \cite{PS}), i.e. a point $m\in X$ such that $D_{0}(x,m)=D_{0}(y,m)\leq D_{0}(x,y)/2$. By definition of $D_{0}$ for every natural number $i$ there is a $\frac{1}{i}$-chain $\alpha:=\{x=x_{0},...,x_{n}=y\}$ such that $L(\alpha )<D_{0}(x,y)+\frac{1}{i}$. Let $j_{i}$ be the largest index such that $L(\{x_{0},...,x_{j_{i}}\})\leq D_{0}(x,y)/2$. By the triangle inequality,
\begin{equation*}
L\left( \{x_{0},...,x_{j_{i}}\}\right) \geq L\left(\{x_{0},...,x_{j_{i}+1}\}\right) -\frac{1}{i}>\frac{D_{0}(x,y)}{2}-\frac{1}{i}.
\end{equation*}
Setting $m_{i}:=x_{j_{i}}$ we have by definition $\,D_{\frac{1}{i}}(x,m_{i})\leq \frac{D_{0}(x,y)}{2}$. On the other hand, 
\begin{align*}
D_{\frac{1}{i}}(y,m_{i})& \leq L\left( \{m_{i}=x_{j_{i}},...,y\}\right) =L\left( \alpha \right) -L\left( \{x_{0},...,x_{j_{i}}\}\right)  \\
&< D_{0}(x,y)+\frac{1}{i}-\left( \frac{D_{0}(x,y)}{2}-\frac{1}{i}\right)  \\
& =\frac{D_{0}(x,y)}{2}+\frac{2}{i}\text{.}
\end{align*}
Combining these inequalites we obtain 
\begin{equation*}
D_{\frac{1}{i}}(x,m_{i})\leq \frac{D_{0}(x,y)}{2}\ \text{ and }\ D_{\frac{1}{i}}(y,m_{i})\leq \frac{D_{0}(x,y)}{2}+\frac{2}{i},\ \ i\geq 1\text{. }
\end{equation*}
By choosing a subsequence if necessary we may assume that $m_{i}\rightarrow m\in X$. Since $X$ is compact, it follows from Dini's Theorem (cf. \cite{R}) that $D_{\frac{1}{i}}$ actually converges to $D_{0}$ uniformly. Thus, given any $k>0$, we have $D_{0}(x,m_{i})-\frac{1}{k}<D_{\frac{1}{i}}(x,m_{i})\leq D_{0}(x,m_{i})$ and $D_{0}(y,m_{i})-\frac{1}{k}<D_{\frac{1}{i}}(y,m_{i})\leq D_{0}(y,m_{i})$ for all large $i$. For a fixed $k$, it follows that 
\begin{equation*}
D_{0}(x,m_{i})-\frac{1}{k}<\frac{D_{0}(x,y)}{2}\ \ \ \text{and}\ \ \ D_{0}(y,m_{i})-\frac{1}{k}<\frac{D_{0}(x,y)}{2}+\frac{2}{i}
\end{equation*}
for all sufficiently large $i$. Letting $i\rightarrow \infty $ in these inequalities, we obtain 
\begin{equation*}
D_{0}(x,m)-\frac{1}{k}\leq \frac{D_{0}(x,y)}{2}\ \ \ \text{and}\ \ \ D_{0}(y,m)-\frac{1}{k}\leq \frac{D_{0}(x,y)}{2}.
\end{equation*}
Finally, we let $k\rightarrow \infty $, and this is the desired result.
\end{proof}

\vspace{.1 in} Note that an immediate corollary of the above proof is that $(X,D_{\varepsilon})$ converges in the uniform sense (cf. \cite{BBI}) - hence, in the Gromov-Hausdorff sense - to $(X,D_0)$ as $\varepsilon \rightarrow 0$.

\section{Examples of Non-Discrete Spectra}

In this section we will present several examples illustrating some of the phenomena that can occur concerning the critical spectrum of a general compact metric space. We will first prove some technical results that facilitate identifying refinement critical values. Roughly speaking, the following construction yields a method for detecting or constructing essential gaps. Indeed, the following discussion and definition should make it clear why the name `essential gap' is appropriate for the structure that induces a refinement critical value.

Let $X$ be a connected metric space. Assume there are points, $x,y\in X$, with $d(x,y)=l>0$, and a number $\varepsilon ^{\ast }>l$ such that the following holds: for each $\varepsilon $ in the interval $(l,\varepsilon^{\ast }]$, if we let $B_{x}=B(x,\varepsilon -l)$ and $B_{y}=B(y,\varepsilon - l)$, then we can express $X$ as a disjoint union, $X=Z\cup Y$, such that

\begin{enumerate}
\item[1)] $B_x \subset Z$ and $B_y \subset Y$ (hence $B_x \cap B_y =\emptyset$),\ 
\item[2)] the only points in $Z$ that are strictly within $\varepsilon $ of a point in $B_{y}$ lie in $B_{x}$, and the only points of $Y$ that are strictly within $\varepsilon $ of a point in $B_{x}$ lie in $B_{y}$.
\end{enumerate}

\noindent If these conditions hold, we call $\{x,y\}$ a \textit{pre-essential gap}. A pre-essential gap need \textit{not} be an essential gap. Note, also, that though it behaves like one locally around $x$ and $y$, $\{Z,Y\}$ need not be a disconnection of $X$. As we have already seen, connected spaces can have refinement critical values.

Given a pre-essential gap, $\{x,y\}$, with $d(x,y)=l < \varepsilon \leq \varepsilon^{\ast}$ as above, let $\gamma =\{x_{0},\dots ,x_{n}\}$ be any $\varepsilon$-chain in $X$. A pair of consecutive points, $(x_{i-1},x_{i})$, $1\leq i\leq n$, will be said to \textit{contain} or \textit{cross the $x,y$-gap} if and only if $x_{i-1}$ lies in either $B_{x}$ or $B_{y}$ and $x_{i}$ lies in the other ball. Assign each pair of consecutive points a value $|x_{i-1},x_{i}|$ as follows: 
\begin{equation}
|x_{i-1},x_{i}|= 
\begin{cases}
0, & (x_{i-1},x_{i}) \ \text{does not contain the} \ x,y\text{-gap} \\ 
1, & x_{i-1}\in B_{x}, \ x_{i}\in B_{y} \\ 
-1, & x_{i-1}\in B_{y}, \ x_{i}\in B_{x}.
\end{cases}
\notag
\end{equation}
\noindent Note that the order of the points in the notation $|x_{i-1},x_{i}|$ does matter; the second case in this definition, for instance, occurs when the first point of the pair lies in $B_{x}$ and the second point lies in $B_{y}$, while the third case occurs when the opposite holds. Now, define $\mathscr G(\gamma ;x,y,\varepsilon ):=\sum_{i=1}^{n}|x_{i-1},x_{i}|$. We call this the \textit{$(x,y,\varepsilon )$-gap number of $\gamma $}; it measures the net number of times $\gamma $ crosses the $x,y$-gap.

\begin{lemma}
\label{essential gap homotopy invariance} Assume that the above conditions hold for some $\varepsilon ^{\ast }>l=d(x,y)$, so that $\{x,y\}$ is a pre-essential gap. Given $\varepsilon $ such that $l<\varepsilon \leq \varepsilon ^{\ast }$, the integer $\mathscr G(\gamma ;x,y,\varepsilon )$ is an $\varepsilon $-homotopy invariant. That is, for fixed $\varepsilon \in (l,\varepsilon ^{\ast }]$, if $\alpha $ and $\gamma $ are $\varepsilon $-chains such that $\alpha \sim _{\varepsilon }\gamma $, then $\mathscr G(\gamma ;x,y,\varepsilon) =\mathscr G(\alpha ;x,y,\varepsilon )$.
\end{lemma}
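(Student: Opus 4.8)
The plan is to reduce the statement to a purely local identity about three points. Since an $\varepsilon$-homotopy is a finite sequence of basic moves, and deletion of a point is the reverse of an insertion, it is enough to show that inserting a single point leaves the gap number unchanged. So suppose $\alpha$ arises from $\gamma = \{x_0,\dots,x_n\}$ by inserting a point $r$ between consecutive points $p := x_{i-1}$ and $s := x_i$. Because $\gamma$ and $\alpha$ are both $\varepsilon$-chains, the three points $p,r,s$ are pairwise within $\varepsilon$; in particular $d(p,s) < \varepsilon$, which I will need below. Every consecutive pair of $\alpha$ other than $(p,r)$ and $(r,s)$ is also a consecutive pair of $\gamma$, contributing the same value to the sum defining $\mathscr G$, so the whole statement collapses to the local identity
\begin{equation*}
|p,r| + |r,s| = |p,s| \qquad (\star)
\end{equation*}
valid for any $p,r,s \in X$ with $d(p,r),\,d(r,s),\,d(p,s) < \varepsilon$.

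To prove $(\star)$ I would first record the contrapositive of condition 2 as a single \emph{separation fact}: if $a,b \in X$ satisfy $d(a,b)<\varepsilon$ with $a \in B_x$ (respectively $a \in B_y$) and $b \notin B_x \cup B_y$, then $b$ lies at distance at least $\varepsilon$ from every point of $B_y$ (respectively of $B_x$). This holds because $X = Z \sqcup Y$ with $B_x \subset Z$ and $B_y \subset Y$: if $a \in B_x$ then $b$ cannot lie in $Y$, since condition 2 would force $b \in B_y$; hence $b \in Z \setminus B_x$, and condition 2 applied within $Z$ now forbids $b$ from being within $\varepsilon$ of any point of $B_y$. The case $a \in B_y$ is symmetric. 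One also notes that $B_x \cap B_y = \emptyset$, so $X$ is partitioned into $B_x$, $B_y$, and the ``outside'' set $X \setminus (B_x \cup B_y)$, and that $|p,s| = \pm 1$ precisely when $p$ and $s$ lie in different balls.

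The identity $(\star)$ then follows by examining where $p,r,s$ sit in this partition. Since replacing the triple $(p,r,s)$ by $(s,r,p)$ turns $(\star)$ into an equivalent identity (each of the three terms changes sign), it suffices to treat, up to reversal, the cases: (i) $p \in B_x$ and $s \in B_y$; (ii) $p$ and $s$ in the same ball; (iii) exactly one of $p,s$ in a ball, the other outside; (iv) both $p,s$ outside. In case (i) the separation fact applied to the pair $(p,r)$ forbids $r$ from lying outside both balls, and each of the two admissible positions of $r$ gives $|p,r|+|r,s| = 1 = |p,s|$. In case (ii), $r$ in the opposite ball contributes $+1$ and $-1$, which cancel, while $r$ elsewhere contributes $0$, and $|p,s|=0$. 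In case (iii), say $p \in B_x$ and $s$ outside: then $|r,s| = 0$ automatically, and the separation fact — invoked on whichever of the pairs $(r,s)$, $(p,r)$ places a ball point next to $s$ or $p$, together with $d(p,s) < \varepsilon$ — rules out the single position of $r$ that would make $|p,r| \neq 0$, so the sum is $0 = |p,s|$; the three other subcases of (iii) are mirror images. Case (iv) is immediate, since then $|p,r| = |r,s| = |p,s| = 0$.

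I expect the only real friction to be in case (iii) and its mirror images: one must invoke the separation fact with the correct pair of points and keep careful track of which ball is ``opposite'' which endpoint. The availability of $d(p,s) < \varepsilon$ — which holds precisely because $p$ and $s$ were consecutive in the $\varepsilon$-chain $\gamma$ before the insertion — is exactly what closes these cases; everything else is routine bookkeeping with the three-way partition of $X$.
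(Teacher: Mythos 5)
Your proof is correct and follows essentially the same route as the paper: reduce to a single basic move and verify the three-point identity $|x_{i-1},x_i| = |x_{i-1},z| + |z,x_i|$ by a case analysis driven by the disjoint decomposition $X = Z \cup Y$ and condition 2 (your ``separation fact'' is exactly the mechanism the paper uses to kill the mixed cases, e.g.\ $|x_{i-1},z|=1$, $|z,x_i|=0$). Your packaging is a bit tidier --- isolating the separation fact and using the reversal symmetry $|a,b| = -|b,a|$ to halve the bookkeeping --- but the substance is the same.
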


\begin{proof}
Since any $\varepsilon $-homotopy taking $\gamma $ to $\alpha $ will consist of a finite sequence of basic moves, it suffices to prove the result in the case where $\alpha $ is obtained by adding or removing a single point to/from $\gamma $. The proof is not difficult, but it is a tedious process in working through all the possible cases. We will prove one case to illustrate the reasoning used. The rest of the cases follow in exactly the same manner.

Let $\gamma =\{x_{0},x_{1},\dots ,x_{n}\}$, and assume that $\alpha $ is obtained by adding $z$ between $x_{i-1}$ and $x_{i}$. Since this basic move only affects three different pairs of points in the sums defining the $(x,y,\varepsilon )$-gap numbers of $\gamma $ and $\alpha $, we only need to show that $|x_{i-1},x_{i}|=|x_{i-1},z|+|z,x_{i}|$.

Assume that $|x_{i-1},x_{i}|=0$. If $|x_{i-1},z|=|z,x_{i}|=0$, then the result is clear. If $|x_{i-1},z|=1$ and $|z,x_{i}|=-1$ (or $|x_{i-1},z|=-1$ and $|z,x_{i}|=1$), the result is also clear. The subcase $|x_{i-1},z|=1=|z,x_{i}|$ cannot occur, for the first equality would imply that $x_{i-1}\in B_{x}$ and $z\in B_{y}$, while the second would imply that $z\in B_{x}$ and $x_{i}\in B_{y}$, which would further imply that $z\in B_{x}\cap B_{y}$, a contradiction. The case $|x_{i-1},z|=-1=|z,x_{i}|$ also cannot occur, for the first equality would imply $x_{i-1}\in B_{y}$ and $z\in B_{x}$, while the second would imply that $z\in B_{y}$ and $x_{i}\in B_{x}$, another contradiction. Suppose $|x_{i-1},z|=1$ and $|z,x_{i}|=0$. Then $x_{i-1}\in B_{x}$, $z\in B_{y}$, and $x_{i}$ cannot be in $B_{x}$ or $B_{y}$ (or else we would have $|x_{i-1},x_{i}|=1$ in the latter case and $|z,x_{i}|=-1$ in the former). But $x_{i}$ must lie in $Z$ or $Y$, and $x_{i}$ is strictly within $\varepsilon $ of $x_{i-1}$, a point in $B_{x}$, and strictly within $\varepsilon $ of $z$, a point in $B_{y}$. If $x_{i}\in Z$, then, since $d(x_{i},z)<\varepsilon $, condition 2 above implies that $x_{i}\in B_{x}$, a contradiction. If $x_{i}\in Y$, then $d(x_{i},x_{i-1})<\varepsilon$ implies that $x_{i}\in B_{y}$, another contradiction. Similar reasoning applies to the cases $|x_{i-1},z|=-1$ and $|z,x_{i}|=0$, $|x_{i-1},z|=0$ and $|z,x_{i}|=1$, and $|x_{i-1},z|=0$ and $|z,x_{i}|=-1$. Thus, given that $|x_{i-1},x_{i}|=0$, the only possible cases that can occur result in the equality $|x_{i-1},x_{i}|=|x_{i-1},z|+|z,x_{i}|$. Proceeding, one would argue simliarly for the cases $|x_{i-1},x_i| = \pm 1$, and then work through the same procedure in the case where a point is removed from $\gamma$ to obtain $\alpha$. All cases that can occur lead to the desired equality, thus proving the result.
\end{proof}

\begin{lemma}[Essential Gap Lemma]
\label{essential gap lemma} Let $X$ be a chain connected metric space, and suppose $\{x,y\}$ is a pre-essential gap with $d(x,y)=l$. If $dist(B(x,r),B(y,r))=d(x,y)$ for all sufficiently small $r$, then $\{x,y\}$ is an essential $l$-gap.
\end{lemma}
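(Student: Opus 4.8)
The plan is to obstruct any $\delta$-refinement of the two-point chain $\{x,y\}$ to an $l$-chain using the $(x,y,\varepsilon)$-gap number together with its homotopy invariance (Lemma~\ref{essential gap homotopy invariance}). First I would fix a value $\delta$ with $l<\delta\le\varepsilon^{*}$ that is moreover close enough to $l$ that the distance hypothesis applies with $r=\delta-l$, i.e. $dist\bigl(B(x,\delta-l),B(y,\delta-l)\bigr)=d(x,y)=l$. Every $\delta$ sufficiently close to $l$ from above meets both constraints, and this is exactly the range of $\delta$ relevant to Definition~\ref{homotopy refinement cv definition}. Put $B_x=B(x,\delta-l)$ and $B_y=B(y,\delta-l)$ as in the pre-essential gap setup. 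Since $d(x,y)=l<\delta$, the chain $\{x,y\}$ is a $\delta$-chain, and because $x\in B_x$ and $y\in B_y$ its single consecutive pair crosses the $x,y$-gap with value $+1$; hence $\mathscr G(\{x,y\};x,y,\delta)=1$.

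Next I would argue by contradiction: suppose $\{x,y\}$ were $\delta$-homotopic to an $l$-chain $\beta=\{z_0=x,z_1,\dots,z_m=y\}$. Because $l<\delta\le\varepsilon^{*}$, Lemma~\ref{essential gap homotopy invariance} applies and gives $\mathscr G(\beta;x,y,\delta)=\mathscr G(\{x,y\};x,y,\delta)=1$. Since this gap number is nonzero, at least one consecutive pair $(z_{j-1},z_j)$ of $\beta$ satisfies $|z_{j-1},z_j|=\pm1$, so that pair crosses the gap; after possibly swapping the roles of $B_x$ and $B_y$ we may assume $z_{j-1}\in B_x$ and $z_j\in B_y$. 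But then
\begin{equation*}
d(z_{j-1},z_j)\ \ge\ dist(B_x,B_y)\ =\ dist\bigl(B(x,\delta-l),B(y,\delta-l)\bigr)\ =\ l,
\end{equation*}
which contradicts the assumption that $\beta$ is an $l$-chain (all of whose consecutive distances are strictly less than $l$).

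Therefore $\{x,y\}$ is not $\delta$-homotopic to any $l$-chain for $\delta$ greater than but sufficiently close to $l$; together with $d(x,y)=l$, this is precisely the assertion that $\{x,y\}$ is an essential $l$-gap. I do not expect a serious obstacle here: the content is already packaged in Lemma~\ref{essential gap homotopy invariance}, and the only point requiring care is the bookkeeping, namely choosing a single $\delta$ that simultaneously lies in $(l,\varepsilon^{*}]$ (so that the pre-essential gap structure and hence the gap-number invariance are available) and has $\delta-l$ small enough for the distance hypothesis to be in force. Both hold automatically once $\delta$ is close enough to $l$, so the argument goes through without complication.
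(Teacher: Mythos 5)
Your proposal is correct and follows essentially the same route as the paper: both fix a scale in $(l,\varepsilon^{*}]$ close enough to $l$ that the distance hypothesis holds for the balls $B(x,\varepsilon-l)$, $B(y,\varepsilon-l)$, observe that $\{x,y\}$ has gap number $1$, and use the invariance of the gap number (Lemma \ref{essential gap homotopy invariance}) together with the fact that any crossing pair of an $l$-chain would force a step of length at least $dist(B_x,B_y)=l$, a contradiction. The only cosmetic difference is that you phrase the final step as a contradiction from an assumed homotopy, while the paper states directly that every $l$-chain has gap number $0$; the content is identical.
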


\begin{proof}
Let $\varepsilon ^{\ast }>l$ be as in the definition of a pre-essential gap, and we may assume that $\varepsilon ^{\ast }-l$ is small enough that $dist(B(x,r),B(y,r))=d(x,y)$ for all $r\leq \varepsilon ^{\ast }-l$. Fix $\varepsilon $ so that $l<\varepsilon \leq \varepsilon ^{\ast }$. Then $\gamma :=\{x,y\}$ is an $\varepsilon $-chain, and $\mathscr G(\gamma
;x,y,\varepsilon )=1$. No $l$-chain can cross the $x,y$-gap. In fact, if $\{z_{0},\dots ,z_{n}\}$ is an $l$-chain, and if we had $z_{i-1}\in B_{x}=B(x,\varepsilon -l)$ and $z_{i-1}\in B_{y}=B(y,\varepsilon -l)$, then we would have $dist(B_{x},B_{y})\leq d(z_{i-1},z_{i})<l$, contradicting the fact that $dist(B_{x},B_{y})=d(x,y)=l$. Thus, the $(x,y,\varepsilon )$-gap number of any $l$-chain must be $0$. The $\varepsilon $-homotopy invariance of this value then implies that $\gamma=\{x,y\}$ is not $\varepsilon $-homotopic to an $l$-chain. Since $\varepsilon \in (l,\varepsilon ^{\ast })$ was arbitrary, it follows that $\{x,y\}$ is an essential $l$-gap.
\end{proof}

\vspace{0.1in} \noindent A partial converse to the Essential Gap Lemma also holds. However, this condition alone is not sufficient to ensure that $\{x,y\} $ is an essential gap.

\begin{lemma}
\label{converse egl} If $\{x,y\}$ is an essential gap in $X$, then $dist(B(x,r),B(y,r))=d(x,y)$ for sufficiently small $r$.
\end{lemma}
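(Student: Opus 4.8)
The plan is to argue by contradiction, exploiting the monotonicity in $r$ of the function $r\mapsto dist(B(x,r),B(y,r))$. Since $x\in B(x,r)$ and $y\in B(y,r)$, one always has $dist(B(x,r),B(y,r))\le d(x,y)=:\varepsilon$, and because shrinking $r$ shrinks the balls, this quantity is non-increasing in $r$ and bounded above by $\varepsilon$. Consequently, equality holds for all sufficiently small $r$ if and only if it holds for a single value of $r$; and the negation of the conclusion is precisely the statement that $dist(B(x,r),B(y,r))<\varepsilon$ for \emph{every} $r>0$, i.e. that for each $r>0$ there exist $p\in B(x,r)$ and $q\in B(y,r)$ with $d(p,q)<\varepsilon$. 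I would assume this for contradiction.

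Next I would fix an arbitrary $\delta>\varepsilon$ and choose $r>0$ small enough that $r<\min\{\varepsilon,\ \delta-\varepsilon\}$. Picking $p\in B(x,r)$ and $q\in B(y,r)$ with $d(p,q)<\varepsilon$ as supplied above, I claim the four-point chain $\{x,p,q,y\}$ is an $\varepsilon$-chain — its consecutive distances are $d(x,p)<r<\varepsilon$, $d(p,q)<\varepsilon$, and $d(q,y)<r<\varepsilon$ — and that it is $\delta$-homotopic to $\{x,y\}$. For the latter I would exhibit two basic moves: insert $p$ between $x$ and $y$, legitimate since $d(x,p)<r<\delta$ and $d(p,y)\le d(p,q)+d(q,y)<\varepsilon+r<\delta$; then insert $q$ between $p$ and $y$, legitimate since $d(p,q)<\varepsilon<\delta$ and $d(q,y)<r<\delta$. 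Thus $\{x,y\}$ is $\delta$-homotopic to the $\varepsilon$-chain $\{x,p,q,y\}$.

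Since $\delta>\varepsilon$ was arbitrary, this shows $\{x,y\}$ can be $\delta$-refined to an $\varepsilon$-chain for every $\delta>\varepsilon$, contradicting condition~2 in the definition of an essential $\varepsilon$-gap, which requires that no such refinement exist for all $\delta$ slightly larger than $\varepsilon$ (note that $\delta$-refinability of a fixed chain is itself monotone in $\delta$, so ``for all $\delta$ slightly larger than $\varepsilon$'' and ``for all $\delta>\varepsilon$'' are interchangeable here). This contradiction proves the lemma.

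The step I would be most careful with is the opening reduction: one must record the monotonicity of $dist(B(x,r),B(y,r))$ in $r$ so that ``equality for sufficiently small $r$'' genuinely reduces to ``equality for some $r$'', which is what makes its failure hand us, for every $r>0$, the near-pair $(p,q)$ needed to run the construction at every scale $\delta>\varepsilon$. The remainder is a routine check of the $\varepsilon$-chain and basic-move inequalities.
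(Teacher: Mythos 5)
Your proof is correct and follows essentially the same route as the paper's: negate the conclusion to obtain points $p\in B(x,r)$, $q\in B(y,r)$ with $d(p,q)<d(x,y)$ for small $r$, then insert them by two basic moves to get a $\delta$-homotopy from $\{x,y\}$ to the $\varepsilon$-chain $\{x,p,q,y\}$, contradicting the definition of an essential gap. The only differences are cosmetic (your explicit monotonicity reduction in $r$ and the choice $r<\min\{\varepsilon,\delta-\varepsilon\}$ versus the paper's $r<\min\{l,(\varepsilon-l)/2\}$).
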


\begin{proof}
Suppose the conclusion does not hold. Let $l=d(x,y)$, and let $\varepsilon > l$ be given. Choose $r < \min\{l,(\varepsilon - l)/2\}$ such that there are points $u \in B(x,r)$, $v \in B(y,r)$ satisfying $dist(B(x,r),B(y,r))\leq d(u,v) < d(x,y)=l$. We can transform $\{x,y\}$ via $\varepsilon$-homotopy as follows: $\{x,y\} \rightarrow \{x,u,y\} \rightarrow \{x,u,v,y\}$. The second step is valid since $d(u,y) \leq d(u,v) + d(v,y) < l + r < (\varepsilon+l)/2 < \varepsilon$. Moreover, this last chain is an $l$-chain, since $d(u,v) < d(x,y) = l$ and $r < l$. In other words, $\{x,y\}$ can be $\varepsilon$-refined to an $l$-chain for all $\varepsilon$ sufficiently close to $l$, contradicting the hypothesis.
\end{proof}

\vspace{.1 in} This is a rather technical set-up, and it may not yet be visually clear what this structure looks like. The following can be taken as a sort of canonical example illustrating this concept.

\begin{example}
\label{essential gap example} \emph{Let $L$, $l_{1}$, $l_{2}$, and $h$ be positive real numbers such that $L$ is significantly larger than $l_{1}$ (say, $L>3l_{1}$), $l_{2}\leq l_{1}$, and $h^{2}+(l_{1}+l_{2})^{2}/4>l_{1}^{2}$. Let $X$ be the metric subspace of $\mathbb{R}^{2}$ shown in Figure \ref{fig:egl_example}, and let $x$, $y$, $u$, and $v$ be the points $\bigl(\frac{L-l_1}{2},h\bigr)$, $\bigl(\frac{L+l_1}{2},h\bigr)$, $\bigl(\frac{L-l_2}{2},0\bigr)$, and $\bigl(\frac{L+l_2}{2},0\bigr)$, respectively. Let $d$ be the diagonal distance from $x$ to $v$ (or $y$ to $u$, by symmetry).} 

\begin{figure}[hbtp]
\centering
\includegraphics[scale=1.0]{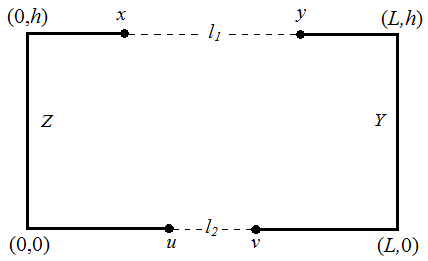}
\caption{An essential gap.}
\label{fig:egl_example}
\end{figure}

\noindent \emph{The condition $h^{2}+(l_{1}+l_{2})^{2}/4>l_{1}^{2}$ implies that $d>l_{1}$. Let $\varepsilon ^{\ast }$ be such that $l_{1}<\varepsilon ^{\ast }<\min\{d,2l_{1},(L-l_{1})/2\}$. Now, fix any $\varepsilon $ such that $l_{1}<\varepsilon \leq \varepsilon ^{\ast }$. Let $Z $ be the left half of $X $, and let $Y$ be the right half. The conditions $\varepsilon <2l_{1}$ and $\varepsilon <(L-l_{2})/2$ ensure that the balls $B_{x}:=B(x,\varepsilon -l_{1})$ and $B_{y}:=B(y,\varepsilon -l_{1})$ do not intersect or extend to the vertical sides of $X$.}

\emph{Suppose that $z\in Z$ and is strictly within $\varepsilon $ of a point in $B_{y}$. Clearly, $z$ cannot lie on the vertical segment of $Z$. If $z$ lies on the lower boundary of $Z$, then the closest it could be to any point in $B_{y}$ is $d$, which occurs when $z=u$. But $d>\varepsilon ^{\ast }\geq \varepsilon $, so $z$ cannot, in fact, lie on this lower boundary. If $z$ lies on the upper boundary of $Z$ but outside of $B_{x}$, then it is at least $\varepsilon -l_{1}+l_{1}=\varepsilon $ away from any point of $B_{y}$. Thus, it must hold that $z$ lies in $B_{x}$. Likewise, by symmetry, if $z\in Y$ and is strictly within $\varepsilon $ of a point in $B_{x}$, then $z\in B_{y}$. Moreover, we clearly have that $dist(B(x,r),B(y,r))=d(x,y)$ for sufficiently small $r$. Therefore, $\{x,y\}$ is an essential $l$-gap, and $l$ is a refinement critical value.}

\emph{To see what makes this essential gap phenomenon occur, consider the trapezoid $\{x,u,v,y\}$. The diagonals of this trapezoid are longer than the longest base of the trapezoid. This, essentially, is why $\{x,y\}$ cannot be $\varepsilon $-refined to an $l_{1}$-chain for $\varepsilon \in (l_{1},\varepsilon ^{\ast })$. Adding $v$ in between $x$ and $y$ is not
allowed, because $\varepsilon \leq \varepsilon ^{\ast }<d=d(x,v)$. Likewise, one cannot jump from $x$ to $u$ to $y$ for the same reason. In other words, because the diagonal is too long, one cannot overcome the $\{x,y\}$-gap by going around it, at least via \textquotedblleft hops\textquotedblright\ that are sufficiently close to $l_{1}$ in length. Note that if $d\leq l_{1}$ then we could, in fact, go around the $\{x,y\}$-gap. Indeed, for any $\varepsilon >l_{1}$, we could then transform the $\varepsilon $-chain, $\{x,y\}$, via $\varepsilon $-homotopy by adding $v$ and then $u$. So, it is the diagonal length that makes this gap essential.}

\emph{Finally, $X$ is not connected, but we could attach a long joining curve to $X$ to make it path connected, without affecting the critical value. $\blacksquare$}
\end{example}

Now, we will use the Essential Gap Lemma to produce examples of compact metric spaces having critical spectra with positive limit points. Moreover, these examples will show that critical values of one type can converge to critical values of the other type. They will also illustrate some of the other properties mentioned in the previous section.

\begin{example}
\label{rapunzel comb one} \emph{We define the following sets.}

\begin{enumerate}
\item[1)] \emph{For $n \geq 0$, $A_n = \{(x,y) \in \mathbb{R}^2 \: : \: 0 \leq x \leq 1,\: y = 1/2^n\} \cup \{(x,y) \in \mathbb{R}^2\: : \: 2 \leq x \leq 3, \: y=1/2^n\}$.}
\item[2)] $A_{\infty} = \{(x,y) \in \mathbb{R}^2 \: : \: 0 \leq x \leq 1, \: y=0\} \cup \{(x,y) \in \mathbb{R}^2\: : \: 2 \leq x \leq 3, \: y=0\}$.
\item[3)] $B_1 = \{(x,y) \in \mathbb{R}^2 \: : \: x=0, \: 0 \leq y \leq 2\}$, $B_2 = \{(x,y) \in \mathbb{R}^2 \: : \: x=3,\: 0 \leq y \leq 2\}$.
\item[4)] $C=\{(x,y) \in \mathbb{R}^2 \: : \: 0 \leq x \leq 3, \: y=2\}$.
\end{enumerate}

\noindent \emph{Define a metric subspace of $\mathbb{R}^2$ by $X=\bigl(\bigcup_{n=0}^{\infty} A_n \bigr) \cup A_{\infty} \cup B_1 \cup B_2 \cup C$. For $n \geq 0$, let $x_n = (1,\frac{1}{2^n})$ and $y_n = (2,\frac{1}{2^n})$, and let $x_{\infty} = (1,0)$, $y_{\infty} = (2,0)$, $z_0 = (\frac{3}{2},2)$. Let $d_0 = d(x_0,z_0)$, and, for $n\geq 1$, let $d_n = d(x_{n-1},y_n)$. Note that $d_0 = d_1$. For $m > n \geq 0$, let $d_m^n = d(x_n,y_m)$, and note that $d_n^{n-1} = d_n$ for $n\geq 1$. See Figure \ref{fig:rapcomp1} below. We call $X$ a ``Rapunzel's Comb.''}

\emph{The following results can be easily verified: 1) $1 < d_n < d_{n-1} \: \: \forall \: n \geq 1$, and $d_n \searrow 1 \: \text{as} \: n\rightarrow \infty$; 2) $d_m^n > 1$ $\forall m>n\geq 0$, and, for fixed $n$, $d_m^n$ is minimized when $m=n+1$. Now, fix $n\geq 1$. It is evident that $dist(B(x_{n},r),B(y_{n},r))=d(x_{n},y_{n})$ for sufficiently small $r$. Fix any $\varepsilon $ such that $d(x_{n},y_{n})=1<\varepsilon \leq d_{n+1}$. Let $Z$ be the left half of $X$, including $z_{0}$ (so $Z$ is closed), and let $Y$ be the rest of the space. Let $B_{x_{n}}=B(x_{n},\varepsilon -1) \subset Z$ and $B_{y_{n}}=B(y_{n},\varepsilon -1) \subset Y$. Suppose $z\in Z $ is strictly within $\varepsilon$ of a point in $B_{y_{n}}$. Clearly $z \notin B_{1}$, and $z \notin C$. In fact, the closest any point of $C\cap Z$ can be to $B_{y_{n}}$ is the distance from $z_{0}$ to $y_{n}$, which is greater than $d_{0}$. But $d_{0}=d_{1}>d_{n+1}\geq \varepsilon $, so $z \notin C$. Thus, $z$ is in the left half of one of the sets, $A_{k}$. However, $z$ cannot be in $A_{k}$ for $0\leq k<n$, since - in that case - the distance between $z$ and $B_{y_{n}}$ would be at least $d_{n}^{k}$, which, in turn, is at least as great as $d_{n}^{n-1}=d_{n}$. Since $d_{n}>d_{n+1}\geq \varepsilon $, this shows that this case cannot occur. Hence, $z$ must lie on $A_{m}$ for some $m\geq n$. If $z$ were in $A_{m}$ for $m>n$, the distance between $z$ and $B_{y_{n}}$ would be at least $d_{m}^{n}$. For fixed $n$, $d_{m}^{n}$ is minimized when $m=n+1$, so the distance between $z$ and any point of $B_{y_{n}}$ is at least $d_{n+1}^{n}=d_{n+1}\geq \varepsilon $. This contradicts that $z$ is strictly within $\varepsilon $ of a point of $B_{y_{n}}$. Therefore, $z \in A_n$, and
since it is within $\varepsilon $ of a point of $B_{y_{n}}$, it must lie in $B_{x_{n}}$. By symmetry, if $z\in Y$ and is strictly within $\varepsilon$ of a point of $B_{x_{n}}$, then $z \in B_{y_{n}}$. Hence, $\{x_{n},y_{n}\}$ is an essential $1$-gap, and $1$ is a refinement critical value.}

\begin{figure}[hbt]
\centering
\includegraphics[scale=1.0]{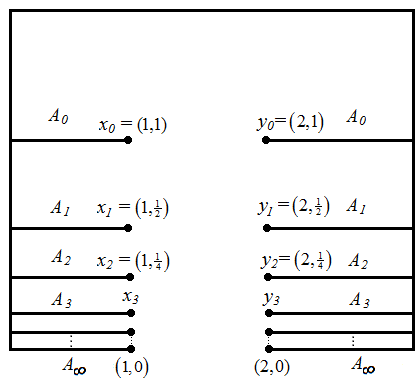}
\caption{Rapunzel's Comb}
\label{fig:rapcomp1}
\end{figure}

\emph{Fix $n\geq 1$, and let $\gamma_{n}=\{x_{n},x_{n+1},y_{n+1},y_{n},x_{n}\}$. For $1<\varepsilon \leq d_{n+1}$, $\gamma_n$ is an $\varepsilon $-loop with $(x_{n},y_{n},\varepsilon )$-gap number $-1$. An $\varepsilon $-null loop has $(x_{n},y_{n},\varepsilon )$-gap number $0$, implying that $\gamma_{n}$ is $\varepsilon $-nontrivial. This holds for all $1<\varepsilon \leq d_{n+1}$. However, for $\varepsilon >d_{n+1}$, $\gamma _{n}$ is easily seen to be trivial. Thus, $d_{n+1} \in H(X)$, and $d_{n}\searrow 1$, showing that $Cr(X) $ is not discrete. Note that $X$ is compact, path connected, and even simply connected.}

\emph{This example illustrates some other interesting properties. There are infinitely many essential $1$-gaps, but there are no non-trivial $1$-loops in $X$. Thus, $\pi _{1}(X)$ is trivial, and the map $\varphi _{\varepsilon,1}:X_{1}\rightarrow X_{\varepsilon }$ is injective for all $\varepsilon $ sufficiently close to $1$. So, there is a sequence, $d_n$, of critical values of one type converging to a critical value, $1$, of an entirely different type. $\blacksquare$}
\end{example}

There are many different variations on Rapunzel's Comb that one can use to illustrate critical value limiting behavior. All of them use the Essential Gap Lemma in some form, and the details follow much as before.

\begin{example}[Rapunzel's Comb - Variation 1]
\label{rapunzel comb two} \emph{For $n\geq 1$, let $h_{n}=2^{-n/2}$ and $H=\sum_{n=1}^{\infty }h_{n} =1+\sqrt{2}$. Let $X$ be the subspace of $\mathbb{R}^2$ shown in Figure \ref{fig:rapcomb2} below (purposefully not drawn exactly to scale to show detail). Here, the gaps increase in length to a limiting gap of length $1$. We also define the following: $z_0 = \bigl(\frac{3}{2},H+2\bigr)$, $d_0 = d(x_{\infty},z_0) = d(y_{\infty},z_0)$, $d_n =d(x_n,y_{n+1})$ $\forall$ $n \geq 1$, $x_{\infty}=(1,H)$, $y_{\infty}=(2,H)$, and}
\begin{equation*}
x_n=\Biggl(1+\frac{1}{2^{n+1}} \ , \ \sum_{i=1}^{n-1}h_i\Biggr), \: \: \ y_n = \Biggl(2- \frac{1}{2^{n+1}} \ , \ \sum_{i=1}^{n-1}h_i\Biggr), \: \forall \: n\geq 1.
\end{equation*}

\begin{figure}[hbpt]
\centering
\includegraphics[scale=1.0]{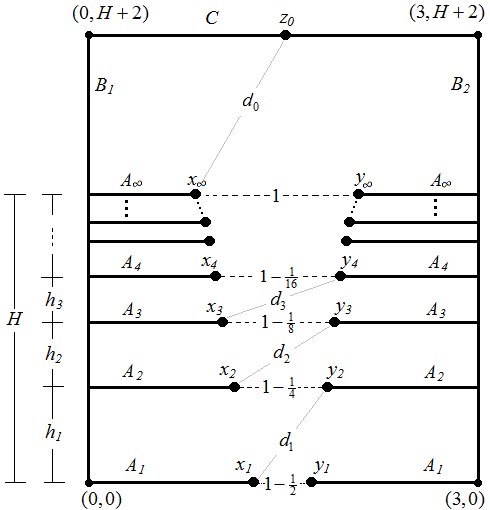}
\caption{Rapunzel's Comb - Variation 1}
\label{fig:rapcomb2}
\end{figure}

%Define the following sets in $\mathbb{R}^2$.}

%\begin{enumerate}
%\item[\emph{1)}] \emph{For $n\geq 1$, $A_n = L_n \cup R_n$, where}
%\[L_n=\left\{(x,y) \ : \ 0 \leq x \leq 1+ \frac{1}{2^{n+1}}, \: y=\sum_{i=1}^{n-1} h_i\right\},\]
%\[R_n = \left\{(x,y) \ : \ 2 - \frac{1}{2^{n+1}} \leq x \leq 3, \: y=\sum_{i=1}^{n-1} h_i\right\};\]
%\emph{if $n=1$, set $\sum_{i=1}^{n-1} h_i = 0$.}
%\item[\emph{2)}] $A_{\infty} = \bigcup_{k=0,2} \bigl\{(x,y) \: : \:
%k \leq x \leq k+1, \: y = H\bigr\}$.
%\item[\emph{3)}] $B_1 = \{(x,y) \: : \: x=0, \: 0 \leq y \leq H+2\}$, 
%$B_2 = \{(x,y) \: : \: x=3, \: 0 \leq y \leq H+2\}$.
%\item[\emph{4)}] $C = \{(x,y) \: : \: 0 \leq x \leq 3, \: y = H+2\}$.
%\end{enumerate}

\emph{Reasoning as in the previous example, one can show that each $\{x_{n},y_{n}\} $, for $n\geq 1$, is an essential $(1-\frac{1}{2^{n}})$-gap. These values converge up to $1$, but $1$ is not a refinement critical value or even an upper non-surjective critical value. In fact, for every $\varepsilon >1$, every $\varepsilon $-chain can be $\varepsilon $-refined to a $1$-chain, even the chain $\{x_{\infty },y_{\infty }\}$. Given any $\varepsilon > 1$, the distances $d(x_{\infty},y_n)$ and $d(y_{\infty},x_n)$ are eventually less than $\varepsilon$, thus making this refinement possible. So, what type of critical value is $1$?}

\emph{Fix $n\geq 2$, and let $\gamma _{n}$ be the loop $\{x_{n},x_{n-1},y_{n-1},y_{n},x_{n}\}$. For all $\varepsilon $ greater than $1-\frac{1}{2^{n}}$, $\gamma _{n}$ is an $\varepsilon $-loop. Since $\{x_{n},y_{n}\}$ is an essential gap, we also know that - for each $\varepsilon $ greater than but sufficiently close to $1-\frac{1}{2^{n}}$ - the $(x_{n},y_{n},\varepsilon )$-gap number is an $\varepsilon $-homotopy invariant. Fixing any such $\varepsilon $, we see that the $\varepsilon$-chain $\alpha_{n}:=\{x_{n},y_{n}\}$ has non-zero $(x_{n},y_{n},\varepsilon) $-gap number, while the $\varepsilon $-chain $\beta_{n}:=\{x_{n},x_{n-1},y_{n-1},y_{n}\}$ does not cross the $x_{n},y_{n}$-gap at all. Hence, $\gamma _{n}=\beta _{n}\alpha _{n}^{-1}$ cannot be $\varepsilon $-null for such an $\varepsilon $. So, $\gamma _{n}$ is $\varepsilon $-nontrivial for all $\varepsilon $ greater than but sufficiently close to $1-\frac{1}{2^{n}}$. On the other hand, $\gamma _{n}$ is $1$-null. In fact, since the diagonals between $x_{n}$ and $y_{n-1}$ are less than $1$ in length (this is easily verified), we can successively remove $y_{n-1}$, $x_{n-1}$, and $y_{n}$ from $\gamma _{n}$, giving us the trivial chain. Therefore, $1$ is a lower non-injective critical value that is an upper limit of sequences of refinement critical values and homotopy critical values. Note that, as before, $X$ is compact, path connected, and simply-connected. $\blacksquare$}
\end{example}

\begin{example}[Rapunzel's Comb - Variation 2]
\label{rapunzel comb three} \emph{The construction of this example is very similar to the previous case. In fact, the lengths of the gaps will be the same. The key difference will be changing the heights between the gaps in the comb. We increase them just enough so that the diagonal lengths between $x_n$ and $y_{n+1}$ are greater than $1$ for each $n$, but still small enough so that the sum of the heights is finite.}

\emph{So, for $n \geq 1$, let $h_n = \frac{\sqrt{3}}{(\sqrt{2})^n}$, and let $H = \sum_{n=1}^{\infty} h_n =\sqrt{3}+\sqrt{6}$. We define $X$ exactly as in the previous example except for the different values $h_n$, and we similarly define the points $x_n$, $y_n$, $x_{\infty}$, $y_{\infty}$, and $z_0$. Also as before, we let $d_n = d(x_n,y_{n+1})$, so that $d_n$ is the length of the diagonal between $x_n$ and $y_{n+1}$. Since the construction is the same, Figure \ref{fig:rapcomb2} holds equally well for this example. We just need to keep in mind that the heights, $h_n$, and, therefore, the diagonals, $d_n$, are larger in this case. One can verify by direct computation the following:} 
\begin{equation*}
d_n^2 = 1 + \frac{3}{2^{n+1}} + \frac{9}{2^{2n+4}} \: \: \: \text{and} \: \: \: d_{n+1} < d_n \: \forall \: n \: \Rightarrow \: d_n \searrow 1,
\end{equation*}
\begin{equation*}
d(x_n,y_m) > 1 \: \: \text{for all} \: \: 1 \leq n < m.
\end{equation*}
\noindent \emph{In addition, for fixed $n$ and $m > n$, the diagonal lengths, $d(x_n,y_m)$, increase as $m$ increases.}

\emph{Now, fix $n\geq 2$, and, recalling that $d(x_{n},y_{n})=1-\frac{1}{2^{n}}$, let $\varepsilon $ be such that} 
\begin{equation}
1-\frac{1}{2^{n}}<\varepsilon \leq \min \Bigl\{d(x_{1},y_{n}),\dots,d(x_{n-1},y_{n}),d(x_{n+1},y_{n}),1+h_{n}-\frac{1}{2^{n}}\Bigr\}. 
\notag
\end{equation}
\noindent \emph{The condition that $\varepsilon $ be less than or equal to $1+h_{n}-\frac{1}{2^{n}}$ is to ensure that the ball of radius $\varepsilon-(1-\frac{1}{2^{n}})$ centered at $x_{n}$ (or $y_{n}$) does not intersect any nearby teeth of the comb or either of the vertical sides of $X$. That is, these balls are just segments of the teeth of the comb formed by $A_{n}$. As before, we let $Z$ be the left half of $X$, and we let $Y$ be the right half. Suppose $z\in Z$ and lies within $\varepsilon$ of a point of $B_{y_{n}}:=B(y_{n},\varepsilon -(1-\frac{1}{2^{n}}))$. Clearly, $z$ cannot lie in $C$ or $B_{1}$. If $z$ were in $A_{m}$ for some $m\leq n-1$, the distance between $z$ and any point of $B_{y_{n}}$ would be at least $d(x_{m},y_{n})$. But $\varepsilon \leq d(x_{m},y_{n})$ for such $m$, so this cannot occur. If $z$ were in $A_{n+1}$, then the closest $z$ could be to any point of $B_{y_{n}}$ is $d_{n}=d(x_{n+1},y_{n})$, but, again, we have $\varepsilon \leq d(x_{n+1},y_{n})$. So, this cannot occur either. Niether can $z$ be in $A_{m}$ for $m>n+1$, since the diagonal lengths, $d(x_{m},y_{n})$, are greater than $d_{n}$ for $m>n+1$. Hence, $z$ must lie in $A_{n}$, and, in fact, it must lie in $B(x_{n},\varepsilon -(1-\frac{1}{2^{n}}))$. By symmetry, the same result holds if $z\in Y$ and lies within $\varepsilon$ of a point in $B(x_{n},\varepsilon -(1-\frac{1}{2^{n}}))$. We also have $dist(B(x_{n},r),B(y_{n},r))=d(x_{n},y_{n})$ for sufficiently small $r$, so $\{x_{n},y_{n}\}$ is an essential $(1-\frac{1}{2^{n}})$-gap. It follows that $1-\frac{1}{2^{n}}=d(x_{n},y_{n})$ is an upper non-surjective critical value; for all $\varepsilon $ greater than but sufficiently close to $1-\frac{1}{2^{n}}$, $\{x_{n},y_{n}\}$ is an $\varepsilon $-chain that cannot be $\varepsilon $-refined to a $(1-\frac{1}{2^{n}})$-chain.}

\emph{Finally, since $1-\frac{1}{2^{n}}\nearrow 1$, we know that $1$ is a critical value. Note that} 
\begin{equation*}
1<\min \Bigl\{d(x_{1},y_{n}),\dots,d(x_{n-1},y_{n}),d(x_{n+1},y_{n}),1+h_{n}-\frac{1}{2^{n}}\Bigr\},
\end{equation*}
\noindent \emph{because all diagonals have length greater than $1$ and} 
\begin{equation*}
h_{n}>\frac{1}{(\sqrt{2})^{n}}>\frac{1}{2^{n}}\Rightarrow 1+h_{n}-\frac{1}{2^{n}}>1.
\end{equation*}
\noindent \emph{Thus, for $\varepsilon =1$, the $(x_{n},y_{n},\varepsilon )$-gap number of an $\varepsilon $-chain is an $\varepsilon $-homotopy invariant. Now, $\{x_{n},y_{n}\}$ is a $1$-chain, and its $(x_{n},y_{n},1)$-gap number is $1$. However, no $(1-\frac{1}{2^{n}})$-chain can cross the $x_{n},y_{n}$-gap. Thus, $\{x_{n},y_{n}\}$ cannot be $1$-homotopic to a $(1-\frac{1}{2^{n} })$-chain. In other words, the map $\varphi_{1,1-1/2^{n}}:X_{1-1/2^{n}}\rightarrow X_{1}$ is not surjective, and this holds for all $n\geq 1$. Hence, $1$ is a lower non-surjective critical value. As we have shown, such a critical value can only occur as the upper limit of upper non-surjective critical values. $\blacksquare$}
\end{example}

\begin{example}[Rapunzel's Comb - Variation 3]
\label{rapunzel comb four} \emph{Working as in the previous examples, let $X$ be the variation of Rapunzel's Comb shown in Figure \ref{fig:rapcomb3}. Here, we have a sequence of gaps of length $1+\frac{1}{2^{n}}$ converging down to a gap of length $1$. Moreover, we have attached an extra single set of teeth below the limiting gap, which adds a gap of length $l < 1$.}

\emph{We choose the heights, $h_{n}$, to be $h_{n}=\frac{1}{(\sqrt{2})^n}$, and we choose $h$ so that $d(x_{\infty},b)=d(y_{\infty},a)=1$ and $d(x_{\infty},a)$, $d(y_{\infty},b) < 1$. A straightforward computation shows that this can be done. It can then be shown as in the previous examples that $\{x_n,y_n\}$ is an essential $(1+\frac{1}{2^n})$-gap for each $n$. Thus, for each $n$ and all $\varepsilon$ greater than but sufficiently close to $1+\frac{1}{2^n}$, $\{x_n,y_n\}$ cannot be $\varepsilon$-refined to a $(1+\frac{1}{2^n})$-chain and, thus, to a $1$-chain, either. Hence, $1$ is an upper non-surjective critical value. However, there is no essential $1$-gap in $X$. Because we have added the gap $\{a,b\}$ below the gap, $\{x_{\infty},y_{\infty }\}$, and chosen $h$ so that $d(x_{\infty},b)=d(y_{\infty },a)=1$, the chain $\{x_{\infty },y_{\infty }\}$ can be $\varepsilon $-refined to a $1$-chain for all $\varepsilon $ greater than $1$. The homotopy, itself, is simply} 
\begin{equation*}
\{x_{\infty },y_{\infty }\}\rightarrow \{x_{\infty},a,y_{\infty}\}\rightarrow \{x_{\infty },a,b,y_{\infty }\}.
\end{equation*}
\noindent \emph{Since $l<1$, this is a $1$-chain. This example shows that - even in a compact space - an upper non-surjective critical value need not correspond to an essential gap. As we have already shown, the two only correspond for certain when the critical value in question is isolated. $\blacksquare$}
\end{example}

\begin{figure}[bph]
\centering
\includegraphics{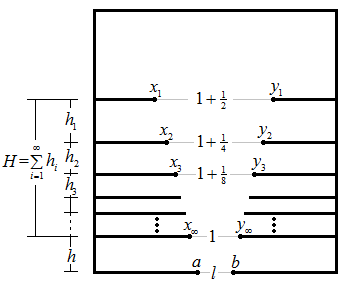}
\caption{Rapunzel's Comb - Variation 3}
\label{fig:rapcomb3}
\end{figure}

\addtocontents{toc}{\protect\setcounter{tocdepth}{0}}
\subsection*{Acknowledgements}
\addtocontents{toc}{\protect\setcounter{tocdepth}{1}}
This research was partly supported by the Tennessee Science Alliance and NSF Grant DMS-0552774.

\vspace{.2 in}

\end{document}